\author{Florin Ambro} 
\address{Institute of Mathematics ``Simion Stoilow'' of the Romanian
Academy\\
P.O. BOX 1-764, RO-014700 Bucharest\\ 
Romania.}
\email{florin.ambro@imar.ro}
\newcommand{\isoto}{{\overset{\sim}{\rightarrow}}}
\newcommand{\C}{{\mathbb C}}
\newcommand{\Q}{{\mathbb Q}}
\newcommand{\Z}{{\mathbb Z}}
\newcommand{\R}{{\mathbb R}}
\newcommand{\bP}{{\mathbb P}} 
\newcommand{\bA}{{\mathbb A}} 
\newcommand{\cF}{{\mathcal F}}
\newcommand{\cI}{{\mathcal I}}
\newcommand{\cK}{{\mathcal K}}
\newcommand{\cO}{{\mathcal O}}
\newcommand{\bH}{{\mathbb H}}  
\newcommand{\Exc}{\operatorname{Exc}}
\newcommand{\id}{\operatorname{id}}
\newcommand{\im}{\operatorname{Im}}
\newcommand{\Ker}{\operatorname{Ker}}
\newcommand{\LCS}{\operatorname{LCS}}
\newcommand{\mult}{\operatorname{mult}}
\newcommand{\Sing}{\operatorname{Sing}}
\newcommand{\Supp}{\operatorname{Supp}}
\theoremstyle{plain}
\newtheorem{thm}{Theorem}[section]
\newtheorem{question}[thm]{Question}
\newtheorem{lem}[thm]{Lemma}
\newtheorem{cor}[thm]{Corollary}
\theoremstyle{definition}
\newtheorem{defn}[thm]{Definition}
\newtheorem{exmp}[thm]{Example}
\newtheorem{rem}[thm]{Remark}   
\newtheorem{ack}{Acknowledgments}   
\theoremstyle{remark}
\begin{document}

\bibliographystyle{amsalpha+}
\title{An injectivity theorem}
\maketitle

\begin{abstract} 
We generalize the injectivity theorem of Esnault and Viehweg, and apply it to the structure of log canonical type divisors.
\end{abstract} 


\footnotetext[1]
{
This work was supported by a grant of the 
Romanian National Authority for Scientific Research, 
CNCS - UEFISCDI, project number PN-II-RU-TE-2011-3-0097.
}
\footnotetext[2]{2010 Mathematics Subject Classification. 
Primary: 14F17. Secondary: 14E30.
Keywords: logarithmic differential forms, log varieties, totally canonical locus,
non-log canonical locus.}

\footnotetext[3]{Keywords: logarithmic differential forms, log varieties, totally canonical locus,
non-log canonical locus.}

\setcounter{section}{-1}

\section{Introduction}


We are interested in the following {\em lifting problem}: 
given a Cartier divisor $L$ on a complex variety $X$ and
a closed subvariety $Y\subset X$, when is the restriction
map
$$
\Gamma(X,\cO_X(L))\to \Gamma(Y,\cO_Y(L))
$$
surjective? The standard method is to consider the short
exact sequence 
$$
0\to \cI_Y(L)\to \cO_X(L)\to\cO_Y(L)\to 0,
$$
which induces a long exact sequence in cohomology
$$
0\to \Gamma(X,\cI_Y(L))\to \Gamma(X,\cO_X(L))\to 
\Gamma(Y,\cO_Y(L))\to H^1(X,\cI_Y(L))\stackrel{\alpha}{\to} 
H^1(X,\cO_X(L))\cdots
$$
The restriction is surjective if and only if $\alpha$ is
injective. In particular, if $H^1(X,\cI_Y(L))=0$.

If $X$ is a nonsingular proper curve, Serre 
duality answers completely the lifting problem: the restriction 
map is not surjective if and only if $L\sim K_X+Y-D$ for some
effective divisor $D$ such that $D-Y$ is not effective. 
In particular, $\deg L\le \deg(K_X+Y)$.
If $\deg L>\deg(K_X+Y)$, then $H^1(X,\cI_Y(L))=0$, and therefore
lifting holds.

If $X$ is a nonsingular projective surface, only sufficient
criteria for lifting are known (see~\cite{Zar35}).  
If $H$ is a general hyperplane section induced by a Veronese 
embedding of sufficiently large degree (depending on $L$), then 
$
\Gamma(X,\cO_X(L))\to \Gamma(H,\cO_H(L))
$
is an isomorphism (Enriques-Severi-Zariski).
If $H$ is a hyperplane section of $X$, then
$H^i(X,\cO_X(K_X+H))=0\ (i>0)$ (Picard-Severi).

These classical results were extended by Serre~\cite{Ser55} as follows: 
if $X$ is affine and $\cF$ is a quasi-coherent 
$\cO_X$-module, then $H^i(X,\cF)=0 \ (i>0)$. If $X$ is projective,
$H$ is ample and $\cF$ is a coherent $\cO_X$-module,
then $H^i(X,\cF(mH))=0 \ (i>0)$ for $m$ sufficiently large.

Kodaira~\cite{Kod53} extended Picard-Severi's result as follows: 
if $X$ is a projective complex manifold, and $H$ is an ample 
divisor, then $H^i(X,\cO_X(K_X+H))=0\ (i>0)$. This vanishing remains
true over a field of characteristic zero, but may fail in positive
characteristic (Raynaud~\cite{Ray78}). Kodaira's vanishing is central in 
the classification theory of complex algebraic varieties, but one has to 
weaken the positivity of $H$ to apply it successfully: it still holds if $H$ 
is only semiample and big (Mumford~\cite{Mum67}, Ramanujam~\cite{Ram72}), or if $K_X+H$ 
is replaced by $\lceil K_X+H\rceil$ for a $\Q$-divisor $H$ which is nef 
and big, whose fractional part is supported by a normal crossings 
divisor (Ramanujam~\cite{Ram74}, Miyaoka~\cite{Miy79}, 
Kawamata~\cite{Kaw82}, Viehweg~\cite{Vi82}). Recall that the
round up  of a real number $x$ is $\lceil x\rceil=\min\{n\in \Z;x\le n\}$, 
and the round up of a $\Q$-divisor $D=\sum_E d_E E$ is 
$\lceil D\rceil =\sum_E \lceil d_E\rceil E$.

The first lifting criterion in the absence of bigness is due to Tankeev~\cite{Tank71}: 
if $X$ is proper nonsingular and $Y\subset X$ is the general member of 
a free linear system, then the restriction 
$$
\Gamma(X,\cO_X(K_X+2Y))\to \Gamma(Y,\cO_Y(K_X+2Y))
$$ 
is surjective. Koll\'ar~\cite{Kol86} extended it to the following 
injectivity theorem: if $H$ is a semiample divisor and $D\in |m_0H|$ 
for some $m_0\ge 1$, then the homomorphism
$$
H^q(X,\cO_X(K_X+mH))\to H^q(X,\cO_X(K_X+mH+D))
$$
is injective for all $m\ge 1,q\ge 0$. Esnault and Viehweg~\cite{EV86, EVlect}
removed completely the positivity assumption, to obtain the following 
injectivity result: let $L$ be a Cartier divisor on $X$ such that 
$L\sim_\Q K_X+\sum_i b_iE_i$, where $\sum_i E_i$ is a normal crossings 
divisor and $0\le b_i\le 1$ are rational numbers. 
If $D$ is an effective divisor supported by $\sum_{0<b_i<1}E_i$, then the
homomorphism
$$
H^q(X,\cO_X(L))\to H^q(X,\cO_X(L+D))
$$
is injective, for all $q$. The original result~\cite[Theorem 5.1]{EVlect} 
was stated in terms of roots of sections of powers of line bundles, 
and restated in this logarithmic form in~\cite[Corollary 3.2]{Amb06}.
It was used in~\cite{Amb03,Amb06} to derive basic properties of
log varieties and quasi-log varieties.

The main result of this paper (Theorem~\ref{oitlog}) is that 
Esnault-Viehweg's injectivity remains true even if some components 
$E_i$ of $D$ have $b_i=1$. In fact, 
it reduces to the special case when all $b_i=1$, which has the 
following geometric interpretation:

\begin{thm} Let $X$ be a proper nonsingular variety, defined over
an algebraically closed field of characteristic zero. Let $\Sigma$
be a normal crossings divisor on $X$, let $U=X\setminus \Sigma$.
Then the restriction homomorphism 
$$
H^q(X,\cO_X(K_X+\Sigma))\to H^q(U,\cO_U(K_U))
$$ 
is injective, for all $q$.
\end{thm}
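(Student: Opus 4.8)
The plan is to identify the restriction homomorphism with the map induced on the top graded piece of the stupid (b\^{e}te) filtration of the logarithmic de Rham complex of the pair $(X,\Sigma)$, and then to feed in Deligne's degeneration theorem. Set $n=\dim X$ and let $j\colon U\hookrightarrow X$ be the open immersion; since $U$ is the complement of the divisor $\Sigma$, the morphism $j$ is affine, so $j_{*}$ is an exact functor on quasi-coherent sheaves. As $\Sigma$ is a reduced normal crossings divisor there are canonical isomorphisms $\Omega^n_X(\log\Sigma)\cong\cO_X(K_X+\Sigma)$ and $\Omega^n_U\cong\cO_U(K_U)$, compatible with restriction along $j$; under them the homomorphism of the statement becomes the natural restriction map $H^q(X,\Omega^n_X(\log\Sigma))\to H^q(U,\Omega^n_U)$. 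By the Lefschetz principle we may assume the base field is $\C$, so that Deligne's Hodge theory for the smooth proper pair $(X,\Sigma)$ is available.

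I would invoke two classical facts. First, the comparison theorem (Grothendieck, Deligne): the inclusion of complexes of $\cO_X$-modules $\Omega^\bullet_X(\log\Sigma)\hookrightarrow j_{*}\Omega^\bullet_U$ is a quasi-isomorphism; combined with the identification $\bH^k(X,j_{*}\Omega^\bullet_U)=\bH^k(U,\Omega^\bullet_U)=H^k_{\mathrm{dR}}(U)$, which is valid because $j$ is affine, it yields isomorphisms $\bH^k(X,\Omega^\bullet_X(\log\Sigma))\isoto H^k_{\mathrm{dR}}(U)$ for all $k$. Second, Deligne's degeneration theorem: the Hodge--de Rham spectral sequence
$$
E_1^{p,q}=H^q\bigl(X,\Omega^p_X(\log\Sigma)\bigr)\ \Rightarrow\ \bH^{p+q}\bigl(X,\Omega^\bullet_X(\log\Sigma)\bigr)
$$
degenerates at $E_1$. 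As this is the spectral sequence of the stupid filtration $\sigma$, its degeneration amounts to the statement that the map $\bH^k(X,\sigma_{\geq p}\Omega^\bullet_X(\log\Sigma))\to\bH^k(X,\Omega^\bullet_X(\log\Sigma))$ induced by the inclusion of complexes is injective for every $p$ and $k$; for the argument at hand only this consequence is needed, and it follows from degeneration by downward induction on $p$, using for each $p$ the exact triangle relating $\sigma_{\geq p+1}\Omega^\bullet_X(\log\Sigma)$, $\sigma_{\geq p}\Omega^\bullet_X(\log\Sigma)$, and $\Omega^p_X(\log\Sigma)[-p]$.

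Now take $p=n$. Both $\Omega^\bullet_X(\log\Sigma)$ and $j_{*}\Omega^\bullet_U$ are concentrated in degrees $0,\dots,n$, so $\sigma_{\geq n}\Omega^\bullet_X(\log\Sigma)=\Omega^n_X(\log\Sigma)[-n]$ and $\sigma_{\geq n}\bigl(j_{*}\Omega^\bullet_U\bigr)=\bigl(j_{*}\Omega^n_U\bigr)[-n]$, and the inclusion $\Omega^\bullet_X(\log\Sigma)\hookrightarrow j_{*}\Omega^\bullet_U$, being a morphism of complexes of $\cO_X$-modules, carries the one stupid filtration into the other. Applying $\bH^{q+n}(X,-)$ to the resulting commutative square of complexes, and using the identifications above together with $\bH^{q+n}\bigl(X,(j_{*}\Omega^n_U)[-n]\bigr)=H^q(U,\Omega^n_U)$, one obtains a commutative square
$$
\begin{CD}
H^q(X,\Omega^n_X(\log\Sigma)) @>>> H^q(U,\Omega^n_U)\\
@VVV @VVV\\
\bH^{q+n}(X,\Omega^\bullet_X(\log\Sigma)) @>\sim>> \bH^{q+n}(U,\Omega^\bullet_U)
\end{CD}
$$
whose top arrow is the restriction homomorphism, whose bottom arrow is the isomorphism supplied by the comparison theorem, and whose left vertical arrow is injective by the degeneration theorem. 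Consequently the composite ``down then across'' is injective, so the top arrow is injective; this is the assertion of the theorem. (No claim is made about the right vertical arrow, which in general is not injective, since the Hodge--de Rham spectral sequence of the open variety $U$ need not degenerate; but it plays no role.)

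I do not expect a genuine obstacle beyond the two cited theorems of Deligne: granting them, the remainder is a diagram chase, and it goes through without derived-category complications precisely because $\Sigma$ is a divisor, so that $j$ is affine and $j_{*}$ is exact. The two points that do call for care are bookkeeping: first, checking that the degeneration of the Hodge--de Rham spectral sequence really delivers the injectivity of the left vertical arrow (the standard equivalence recalled above); and second, checking that the top arrow of the square, after unwinding the identifications $\sigma_{\geq n}(-)=(-)^n[-n]$ and $\bH^{q+n}\bigl(X,(j_{*}\cF)[-n]\bigr)=H^q(U,\cF|_U)$, is genuinely the restriction homomorphism of the statement --- the latter being exactly the point at which the numerical coincidence ``top graded piece of the logarithmic de Rham complex $=$ $\cO_X(K_X+\Sigma)$'' enters.
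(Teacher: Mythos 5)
Your proposal is correct and coincides in substance with the paper's own proof: both apply the naive (stupid) filtration at level $n=\dim X$ to the inclusion $\Omega^\bullet_X(\log\Sigma)\subset w_*(\Omega^\bullet_U)$, use the Atiyah--Hodge/Grothendieck--Deligne comparison quasi-isomorphism together with Deligne's $E_1$-degeneration to get injectivity of the top graded piece into hypercohomology, and identify the resulting map with the restriction homomorphism via the affineness of $U\hookrightarrow X$. The only differences are cosmetic (orientation of the commutative square and your explicit remark that degeneration yields injectivity of $\bH^k(X,\sigma_{\geq p})\to\bH^k(X,-)$, which the paper uses without comment).
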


Combined with Serre vanishing on affine varieties, it gives:

\begin{cor} Let $X$ be a proper nonsingular variety, defined over
an algebraically closed field of characteristic zero. Let $\Sigma$
be a normal crossings divisor on $X$ such that $X\setminus \Sigma$ is 
contained in an affine open subset of $X$. Then 
$$
H^q(X,\cO_X(K_X+\Sigma))=0
$$ 
for $q>0$.
\end{cor}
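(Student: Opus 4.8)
The plan is to deduce the statement directly from the Theorem together with Serre's vanishing for quasi-coherent sheaves on affine schemes; the only point requiring a little care is to factor the restriction map of the Theorem through the affine open subset supplied by the hypothesis, rather than restricting all the way to $X\setminus\Sigma$ in one step.

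First I would set $U=X\setminus\Sigma$ and fix an affine open subset $V\subseteq X$ with $U\subseteq V$, which exists by assumption. Since $\Sigma$ is a divisor disjoint from $U$, the restriction of $\cO_X(K_X+\Sigma)$ to $U$ is canonically the canonical sheaf $\cO_U(K_U)$, and with this identification the Theorem asserts that the restriction homomorphism
$$
r_q\colon H^q(X,\cO_X(K_X+\Sigma))\to H^q(U,\cO_U(K_U))
$$
is injective for every $q$. I would then observe that $r_q$ factors through $V$: writing $\cF=\cO_X(K_X+\Sigma)$ and using the open immersions $U\subseteq V\subseteq X$, restriction maps compose, so
$$
r_q\colon H^q(X,\cF)\stackrel{a_q}{\longrightarrow}H^q(V,\cF|_V)\stackrel{b_q}{\longrightarrow}H^q(U,\cF|_U)=H^q(U,\cO_U(K_U)).
$$

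Now $\cF|_V$ is coherent, in particular quasi-coherent, on the affine variety $V$, so Serre's theorem gives $H^q(V,\cF|_V)=0$ for all $q>0$. Hence for $q>0$ the middle group above vanishes, $r_q=b_q\circ a_q=0$, and the injectivity of $r_q$ from the Theorem forces $H^q(X,\cO_X(K_X+\Sigma))=0$, as claimed.

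There is no serious obstacle here: the substantive input is the Theorem, and the remainder is bookkeeping. The one thing to verify is that the restriction homomorphism named in the Theorem really is the composite $b_q\circ a_q$ — equivalently, that the identification $\cF|_U\cong\cO_U(K_U)$ is compatible with the analogous identification over $V$ — which follows from the naturality of the isomorphism $\cO_X(K_X)|_W\cong\cO_W(K_W)$ under passage to smaller open subsets $W$.
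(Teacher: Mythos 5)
Your proof is correct and is essentially the paper's own argument: the paper deduces this corollary from the injectivity theorem by factoring the restriction through the affine open subset containing $X\setminus\Sigma$ and invoking Serre's vanishing there (the same factorization appears in the remark after Theorem~\ref{oitlog} and in the proof of Corollary~\ref{afv}). Nothing further is needed.
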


If $X\setminus \Sigma$ itself is affine, this vanishing is due to 
Esnault and Viehweg~\cite[page 5]{EVlect}. It implies the Kodaira
vanishing theorem.

We outline the structure of this paper. After some preliminaries in
Section 1, we prove the main injectivity result in Section 2. 
The proof is similar to that of Esnault-Viehweg, except that we do not 
use duality. It is an immediate consequence of the Atiyah-Hodge Lemma and 
Deligne's degeneration of the logarithmic Hodge to de Rham spectral sequence.
In Section 3, we obtain some vanishing theorems for sheaves of logarithmic
forms of intermediate degree. The results are the same as in~\cite{EVlect}, 
except that the complement of the boundary is only contained in an affine
open subset, instead of being itself affine. They suggest that injectivity
may extend to forms of intermediate degree (Question~\ref{inde}).
In section 4, we introduce the {\em locus of totally canonical singularities}
and the {\em non-log canonical locus} of a log variety. The latter has the same 
support as the subscheme structure for the non-log canonical locus introduced 
in~\cite{Amb03}, but the scheme structure usually differ (see Remark~\ref{comp}).
In Section 5, we partially extend the injectivity theorem to the category of log varieties. 
The open subset to which we restrict is the locus of totally canonical 
singularities of some log structure. We can only prove the injectivity for the first
cohomology group.
The idea is to descend injectivity from a log resolution, and to make this work for
higher cohomology groups one needs vanishing theorems or at least the 
degeneration of the Leray spectral sequence for a certain resolution. We do not
pursue this here. In Section 6, we establish the {\em lifting property of 
$\Gamma(X,\cO_X(L))\to \Gamma(Y,\cO_Y(L))$
for a Cartier divisor $L\sim_\R K_X+B$, with $Y$ the non-log canonical locus
of $X$} (Theorem~\ref{8.2}). 
We give two applications for this unexpected property. For a 
proper generalized log Calabi-Yau variety, we show that the non-log canonical locus 
is connected and intersects every lc center (Theorem~\ref{gia}). And we obtain an extension
theorem from a union of log canonical centers, in the log canonical case (Theorem~\ref{elc}). 
We expect this extension to play a key role in the characterization of the restriction
of log canonical rings to lc centers. 
In Section 7 we list some questions that appeared naturally during this work.

\begin{ack} I would like to thank Paolo Cascini, Alexandru Dimca, J\'anos Koll\'ar, Adrian Langer,
Vyacheslav V. Shokurov and the anonymous referees for useful discussions, comments and corrections.
\end{ack}


\section{Preliminaries}



\subsection{Directed limits}


A {\em directed family} of abelian groups $(A_m)_{m\in \Z}$ consists
of homomorphisms of abelian groups $\varphi_{mn}\colon A_m\to A_n$,
for $m\le n$, such that $\varphi_{mm}=\text{id}_{A_m}$ and 
$\varphi_{np}\circ \varphi_{mn} =\varphi_{mp}$ for $m\le n\le p$.
The {\em directed limit} $\varinjlim_m A_m$ of $(A_m)_{m\in \Z}$ is 
defined as the quotient of $\oplus_{m\in \Z}A_m$ modulo the subgroup generated by 
$x_m-\varphi_{mn}(x_m)$ for all $m\le n$ and $x_m\in A_m$. The
homomorphisms $\mu_m\colon A_m\to \varinjlim_n A_n, a_m\mapsto [a_m]$
are compatible with $\varphi_{mn}$, and satisfy the following universal
property: if $B$ is an abelian group and $f_n\colon A_m\to B$ are 
homomorphisms compatible with $\varphi_{mn}$, then there exists a unique
homomorphism $f\colon \varinjlim_m A_m\to B$ such that $f_m=f\circ \mu_m$ 
for all $m$.
From the explicit description of the directed limit, the following 
properties hold: $\varinjlim_n A_n=\cup_m\mu_m(A_m)$, and 
$\Ker(A_m\to \varinjlim_n A_n)=\cup_{m\le n}\Ker(A_m\to A_n)$.
In particular, we obtain

\begin{lem}\label{dl}
Let $(A_m)_{m\in \Z}$ be a directed system of abelian groups.
\begin{itemize}
\item[1)] $A_m\to \varinjlim_n A_n$ is injective if and only if
$A_m\to A_n$ is injective for all $n\ge m$.
\item[2)] Let $(B_m)_{m\in \Z}$ be another directed family of abelian groups,
let $f_m\colon A_m\to B_m$ be a sequence of compatible homomorphisms.
They induce a homomorphism 
$
f\colon  \varinjlim_m A_m\to \varinjlim_m B_m.
$
If $f_m$ is injective for $m\ge m_0$, then $f$ is injective.
\end{itemize}
\end{lem}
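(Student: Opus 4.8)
The plan is to read both statements off directly from the explicit description of the directed limit recalled just before the lemma, namely $\varinjlim_n A_n=\cup_m\mu_m(A_m)$ together with $\Ker(A_m\to \varinjlim_n A_n)=\cup_{m\le n}\Ker(A_m\to A_n)$. No extra input is needed; the work is purely diagram-chasing.

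For part 1), I would observe that $A_m\to\varinjlim_n A_n$ is injective if and only if its kernel vanishes, and by the formula above this kernel is the increasing union $\cup_{m\le n}\Ker(A_m\to A_n)$. An increasing union of subgroups is trivial precisely when every member is trivial, which yields the stated equivalence. The ``only if'' direction is in fact immediate even without the union: if $A_m\to\varinjlim_n A_n$ is injective, then since it factors through $A_m\to A_n$ for every $n\ge m$, each $A_m\to A_n$ is injective as well.

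For part 2), I would first note that $f$ is well defined: the $f_m$ are compatible with the transition maps, so they induce $f$ on the limits by the universal property stated above. To prove injectivity, take $x\in\varinjlim_m A_m$ with $f(x)=0$ and write $x=\mu_m(a_m)$ with $a_m\in A_m$. Replacing $a_m$ by $\varphi_{m,\max(m,m_0)}(a_m)$, which has the same image $x$ in the limit, I may assume $m\ge m_0$. Writing $\psi_{mn}\colon B_m\to B_n$ and $\nu_m\colon B_m\to\varinjlim_n B_n$ for the transition and structure maps of $(B_m)_m$, we have $\nu_m(f_m(a_m))=f(x)=0$, so by the kernel formula there is $n\ge m$ with $\psi_{mn}(f_m(a_m))=0$ in $B_n$. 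Compatibility gives $f_n(\varphi_{mn}(a_m))=\psi_{mn}(f_m(a_m))=0$, and since $n\ge m\ge m_0$ the map $f_n$ is injective, hence $\varphi_{mn}(a_m)=0$. Therefore $x=\mu_m(a_m)=\mu_n(\varphi_{mn}(a_m))=0$, proving $f$ injective.

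There is no genuine obstacle here; the one point to watch is the index shift $m\mapsto\max(m,m_0)$ in part 2), which is exactly what allows one to apply injectivity of $f_n$ for large indices rather than demanding it at the original index $m$.
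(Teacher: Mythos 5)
Your proof is correct and follows exactly the route the paper intends: the lemma is stated there as an immediate consequence of the two displayed facts $\varinjlim_n A_n=\cup_m\mu_m(A_m)$ and $\Ker(A_m\to \varinjlim_n A_n)=\cup_{m\le n}\Ker(A_m\to A_n)$, and your argument is just the careful diagram-chase spelling this out, including the harmless index shift to $\max(m,m_0)$ in part 2).
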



\subsection{Homomorphisms induced in cohomology}

For standard notations and results, see Grothendieck~\cite[12.1.7,12.2.5]{EGA3}.
Let $f\colon X'\to X$ and $\pi\colon X\to S$ be morphisms of 
ringed spaces. Denote $\pi'=\pi\circ f\colon X'\to S$.

Let $\cF$ be an $\cO_X$-module, and $\cF'$ an $\cO_{X'}$-module.
A homomorphism of $\cO_X$-modules $u\colon \cF\to f_*\cF'$ induces functorial 
homomorphisms of $\cO_S$-modules
$$
R^qu\colon R^q\pi_*\cF\to R^q\pi'_*(\cF') \ (q\ge 0).
$$
Grothendieck-Leray constructed a spectral sequence 
$$
E_2^{pq}=R^p\pi_*(R^qf_*\cF')\Longrightarrow R^{p+q}\pi'_*(\cF').
$$

\begin{lem}\label{i1} 
The homomorphism $R^1\pi_*(f_*\cF')\to R^1\pi'_*(\cF')$, induced by 
$\id\colon f_*\cF'\to f_*\cF'$, is injective.
\end{lem}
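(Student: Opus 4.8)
The plan is to use the low-degree exact sequence of terms of the Grothendieck–Leray spectral sequence
$$
E_2^{pq}=R^p\pi_*(R^qf_*\cF')\Longrightarrow R^{p+q}\pi'_*(\cF').
$$
Recall that any first-quadrant spectral sequence yields an exact sequence of low-degree terms
$$
0\to E_2^{10}\to E^1\to E_2^{01}\to E_2^{20}\to E^2,
$$
where $E^n=R^n\pi'_*(\cF')$. In our situation $E_2^{10}=R^1\pi_*(R^0f_*\cF')=R^1\pi_*(f_*\cF')$ and $E^1=R^1\pi'_*(\cF')$, so the first arrow of this five-term sequence is precisely the map in question. Exactness at $E_2^{10}$ then gives injectivity.

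The only point requiring care is to check that the edge homomorphism $E_2^{10}\to E^1$ coming from the spectral sequence agrees with the map $R^1\pi_*(f_*\cF')\to R^1\pi'_*(\cF')$ induced functorially by $\id\colon f_*\cF'\to f_*\cF'$ as described in the paragraph preceding the lemma. This is a standard compatibility: the edge map $E_2^{p0}\to E^p$ of the Leray spectral sequence for $\pi\circ f$ is exactly the homomorphism $R^p\pi_*(f_*\cF')\to R^p(\pi f)_*\cF'$ deduced from the adjunction/functoriality of higher direct images (see the cited \cite[12.1.7, 12.2.5]{EGA3}). I would simply invoke this identification rather than reprove it.

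The main obstacle—such as it is—is purely bookkeeping: making sure the five-term exact sequence is available in this generality (arbitrary morphisms of ringed spaces, not just nice schemes) and that the identification of the edge map is legitimate. Both are contained in the Grothendieck reference already cited, so no new input is needed; the lemma is genuinely an immediate formal consequence of the spectral sequence. Alternatively, if one prefers to avoid quoting the five-term sequence, one can argue directly: the spectral sequence is first-quadrant, so $E_2^{10}=E_3^{10}=\cdots=E_\infty^{10}$ (there are no differentials into or out of the $(1,0)$ spot after the $E_2$ page, since $d_2\colon E_2^{10}\to E_2^{-1,2}=0$ and the incoming $d_2$ originates from $E_2^{-1,1}=0$), and $E_\infty^{10}$ is a subobject of $R^1\pi'_*(\cF')$ via the edge map. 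This gives the injectivity directly from the convergence of the spectral sequence.
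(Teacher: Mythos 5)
Your proposal is correct and is essentially the paper's own argument: both invoke the five-term low-degree exact sequence of the Grothendieck--Leray spectral sequence and identify its first arrow $E_2^{1,0}=R^1\pi_*(f_*\cF')\to R^1\pi'_*(\cF')$ with the map induced by $\id\colon f_*\cF'\to f_*\cF'$, citing \cite{EGA3} for this compatibility. The extra remark that $E_2^{1,0}=E_\infty^{1,0}$ injects into the abutment is a fine alternative phrasing of the same point and needs no further justification.
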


\begin{proof} The exact sequence of terms of low degree of the Grothendieck-Leray 
spectral sequence is
$$
0\to R^1\pi_*(f_*\cF')\to R^1\pi'_*(\cF')\to \pi_*(R^1f_*\cF')
\to R^2\pi_*(f_*\cF') \to R^2\pi'_*(\cF'),
$$
and $R^1\pi_*(f_*\cF')\to R^1\pi'_*(\cF')$ is exactly the homomorphism induced
by the identity of $f_*\cF'$.
\end{proof}

The other maps $R^p\pi_*(f_*\cF')\to R^p\pi'_*(\cF')\ (p\ge 2)$, appearing in the 
spectral sequence as the edge maps $E_2^{p,0}\to H^p$,
may not be injective.

\begin{exmp} Let $f\colon X\to Y$ be the blow-up at a point of a proper smooth 
complex surface $Y$, let $E$ be the exceptional divisor. Then the map
$$
H^2(Y,f_*\cO_X(K_X+E))\to H^2(X,\cO_X(K_X+E))
$$
is not injective. In particular, the Leray spectral sequence for $f$ and $\cO_X(K_X+E)$
does not degenerate. Indeed, consider the commutative diagram
\[ 
\xymatrix{
   H^2(X,\cO_X(K_X))  \ar[r]^\gamma                      &  H^2(X,\cO_X(K_X+E))      \\
   H^2(Y,f_*\cO_X(K_X)) \ar[r]^\beta \ar[u]_\alpha  &  H^2(Y,f_*\cO_X(K_X+E)) \ar[u]^\delta   
} \]

We have $R^if_*\cO_X(K_X)=0$ for $i=1,2$. Therefore $\alpha$ is an isomorphism, from the
Leray spectral sequence.
The natural map $f_*\cO_X(K_X)\to f_*\cO_X(K_X+E)$ is an isomorphism. Therefore $\beta$ 
is an isomorphism.
By Serre duality, the dual of $\gamma$ is the inclusion $\Gamma(X,\cO_X(-E))\to \Gamma(X,\cO_X)$.
Since $X$ is proper, $\Gamma(X,\cO_X)=\C$. Therefore $\Gamma(X,\cO_X(-E))=0$. We obtain 
$\gamma^\vee=0$. Therefore $\gamma=0$.

Since $\alpha,\beta$ are isomorphisms and $\gamma=0$, we deduce $\delta=0$. But 
$H^2(Y,f_*\cO_X(K_X+E))$ is non-zero, being isomorphic to $H^2(X,\cO_X(K_X))$, which is
dual to $\Gamma(X,\cO_X)=\C$. Therefore $\delta$ is not injective.
\end{exmp}


\subsection{Weil divisors}


Let $X$ be a normal algebraic variety defined over $k$,
an algebraically closed field. A {\em prime on} $X$ is a 
reduced irreducible cycle of codimension one. 
An {\em $\R$-Weil divisor} $D$ on $X$ is a formal sum
$$
D=\sum_E d_EE,
$$
where the sum runs after all primes on $X$, and $d_E$ are 
real numbers such that $\{E;d_E\ne 0\}$ has at most finitely
many elements. It
can be viewed as an $\R$-valued function defined on all primes, 
with finite support. By restricting the 
values to $\Q$ or $\Z$, we obtain the notion of 
{\em $\Q$-Weil divisor} and {\em Weil divisor}, respectively.

Let $f\in k(X)$ be a rational function. For a prime $E$ on $X$,
let $t$ be a local parameter at the generic point of $E$.
Define $v_E(f)$ as the supremum of all $m\in \Z$ such that 
$ft^{-m}$ is regular at the generic point of $E$. If $f=0$,
then $v_E(f)=+\infty$. Else, $v_E(f)$ is a well defined
integer. We have $v_E(fg)=v_E(f)+v_E(g)$ and 
$v_E(f+g)\ge \min(v_E(f),v_E(g))$.

For non-zero $f\in k(X)$ define
$
(f)=\sum_E v_E(f)E,
$
where the sum runs after all primes on $X$. The sum has finite
support, so $(f)$ is a Weil divisor. A Weil divisor $D$ on $X$ 
is {\em linearly trivial}, denoted $D\sim 0$, if there exists 
$0\ne f\in k(X)$ such that $D=(f)$. 

\begin{defn}
Let $D$ be an $\R$-Weil divisor on $X$. We call $D$
\begin{itemize}
\item {\em $\R$-linearly trivial}, denoted $D\sim_\R 0$, if 
there exist finitely many $r_i\in \R$ and $0\ne f_i\in k(X)$ 
such that $D=\sum_i r_i(f_i)$.
\item {\em $\Q$-linearly trivial}, denoted $D\sim_\Q 0$, if 
there exist finitely many $r_i\in \Q$ and $0\ne f_i\in k(X)$ 
such that $D=\sum_i r_i(f_i)$.
\end{itemize}
\end{defn}

\begin{lem}[\cite{Sh92}, page 97]\label{1.5}
Let $E_1,\ldots,E_l$ be distinct prime divisors on $X$, and $D$ a 
$\Q$-Weil divisor on $X$. If not empty, the set
$
\{(x_1,\ldots,x_l)\in \R^l; \sum_{i=1}^l x_iE_i\sim_\R D\}
$
is an affine subspace of $\R^l$ defined over $\Q$.
\end{lem}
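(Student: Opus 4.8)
The plan is to rewrite the $\R$-linear equivalence condition as a finite system of linear equations with rational coefficients, and then conclude by elementary linear algebra over $\Q\subset\R$. Write $\operatorname{Div}(X)$ for the group of Weil divisors on $X$, a free abelian group on the set of primes, and set $\operatorname{Div}(X)_\Q=\operatorname{Div}(X)\otimes_\Z\Q$ and $\operatorname{Div}(X)_\R=\operatorname{Div}(X)\otimes_\Z\R=\operatorname{Div}(X)_\Q\otimes_\Q\R$. Let $P\subset\operatorname{Div}(X)_\Q$ be the $\Q$-subspace spanned by all principal divisors $(f)$, $0\ne f\in k(X)$, and $P_\R=P\otimes_\Q\R$ its $\R$-span in $\operatorname{Div}(X)_\R$. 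By the very definition of $\sim_\R$, a point $x=(x_1,\dots,x_l)\in\R^l$ satisfies $\sum_i x_iE_i\sim_\R D$ if and only if $\sum_i x_iE_i-D\in P_\R$. Finally, let $W\subset\operatorname{Div}(X)_\Q$ be the finite-dimensional $\Q$-subspace spanned by $E_1,\dots,E_l$ together with the (finitely many) primes occurring in $\Supp D$; then $\sum_i x_iE_i-D\in W_\R:=W\otimes_\Q\R$ for every $x\in\R^l$.

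The key step is the linear-algebra fact that for any two $\Q$-subspaces $A,B$ of a $\Q$-vector space $M$ one has $(A\otimes_\Q\R)\cap(B\otimes_\Q\R)=(A\cap B)\otimes_\Q\R$ inside $M\otimes_\Q\R$. (Choose a $\Q$-basis of $A\cap B$, extend it to $\Q$-bases of $A$ and of $B$, note that the union of the three families is $\Q$-linearly independent and hence remains $\R$-linearly independent after $\otimes_\Q\R$; the identity follows at once. Here $M=\operatorname{Div}(X)_\Q$ is infinite-dimensional, but this plays no role.) Applying it with $A=W$ and $B=P$, the membership $\sum_i x_iE_i-D\in P_\R$ is equivalent to $\sum_i x_iE_i-D\in W_\R\cap P_\R=(W\cap P)_\R$, where $(W\cap P)_\R=(W\cap P)\otimes_\Q\R$. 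Thus the set $V\subset\R^l$ in the statement equals $\{x\in\R^l:\sum_i x_iE_i-D\in(W\cap P)_\R\}$.

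Now $W\cap P$ is a $\Q$-subspace of the finite-dimensional $\Q$-vector space $W$, hence the common zero locus in $W$ of finitely many $\Q$-linear functionals $\ell_1,\dots,\ell_m$; extending these $\R$-linearly, $(W\cap P)_\R$ is the common zero locus of $\ell_1,\dots,\ell_m$ in $W_\R$. Since $x\mapsto\sum_i x_iE_i-D$ is an affine map $\R^l\to W_\R$ whose coefficients, in the basis of $W$ given by primes, are rational (indeed integral for the linear part, and equal to the coefficients of $D$ for the constant part), composing it with $\ell_1,\dots,\ell_m$ exhibits $V$ as the solution set of a system of $m$ affine-linear equations with coefficients in $\Q$. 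If $V\ne\emptyset$, this rational system is consistent over $\R$, so by comparing ranks over $\Q$ and over $\R$ it is consistent over $\Q$; choosing $b_0\in V\cap\Q^l$ we get $V=b_0+V_0$, where $V_0\subset\R^l$ is the linear subspace defined by the associated homogeneous system, which is rational. Hence $V$ is an affine subspace of $\R^l$ defined over $\Q$.

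I do not expect a genuine obstacle; the only point needing care is the descent of an a priori real relation $\sum_i x_iE_i-D=\sum_j r_j(f_j)$ with $r_j\in\R$ to a rational linear condition on $x$, which is precisely what the intersection identity $(W\otimes_\Q\R)\cap(P\otimes_\Q\R)=(W\cap P)\otimes_\Q\R$ provides. One should also double-check the mild bookkeeping: that $P_\R$ is genuinely the $\R$-span of the principal divisors (so that the reformulation of $\sim_\R$ is correct), and that a consistent rational linear system has a rational solution.
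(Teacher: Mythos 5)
Your proof is correct, and it is organized differently from the paper's. The paper works pointwise: for a given $x$ in the set it fixes the finitely many rational functions $f_\alpha$ occurring in one relation $\sum_i x_iE_i=D+\sum_\alpha r_\alpha(f_\alpha)$, notes that for this fixed choice the pairs $(x,r)$ solve a linear system with integer (resp.\ rational) coefficients, and concludes first that the homogeneous set $V_0$ is a \emph{union} of $\Q$-defined subspaces; it then needs an extra spanning argument (an $\R$-basis of $V_0$, each member lying in a $\Q$-defined piece) to see that $V_0$ itself is defined over $\Q$, and a separate rational-solution step in the inhomogeneous case to produce a rational point of $V$. You instead globalize the argument: applying the base-change identity $(W\otimes_\Q\R)\cap(P\otimes_\Q\R)=(W\cap P)\otimes_\Q\R$, with $W$ the $\Q$-span of $E_1,\ldots,E_l$ and the primes of $\Supp D$ and $P$ the $\Q$-span of principal divisors, you exhibit $V$ in one stroke as the solution set of a single finite affine-linear system with rational coefficients, after which rationality --- including the existence of a rational point, by comparing ranks over $\Q$ and $\R$ --- is immediate. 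The paper's route stays entirely inside $\R^l$ with explicit relations and needs no tensor products; yours eliminates the union-of-subspaces step and treats the homogeneous and inhomogeneous parts uniformly, at the cost of the intersection identity, whose basis proof you sketch correctly (the only point to keep in mind is that finitely many $\Q$-independent vectors remain $\R$-independent in $M\otimes_\Q\R$ even when $M$ is infinite-dimensional). Both arguments ultimately rest on the same elementary fact that a rational linear system solvable over $\R$ is solvable over $\Q$.
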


\begin{proof} 
Case $D=0$: the set $V_0=\{x\in \R^l;\sum_{i=1}^l x_iE_i\sim_\R 0\}$
is an $\R$-vector subspace of $\R^l$. Let $x\in V_0$. This means that there
exist finitely many non-zero rational functions $f_\alpha\in k(X)^\times$ and 
finitely many real numbers $r_\alpha\in \R$ such that 
$$
\sum_{i=1}^l x_iE_i=\sum_\alpha r_\alpha (f_\alpha).
$$
This equality of divisors is equivalent to the system of linear equations
$$
\mult_E(\sum_{i=1}^l x_iE_i)=\sum_\alpha r_\alpha \mult_E(f_\alpha),
$$
one equation for each prime divisor $E$ which may appear in the support
of $f_\alpha$, for some $\alpha$. We have $\mult_E(f_\alpha)\in \Z$. If we fix 
the $f_\alpha$, this means that $r_\alpha$ are the solutions of a linear system 
defined over $\Q$, and the corresponding $x$'s belong to an $\R$-vector subspace
of $\R^l$ defined over $\Q$. 

The above argument shows that $V_0$ is a union of vector subspaces defined over $\Q$.
Let $v_1,\ldots,v_k$ be a basis for $V_0$ over $\R$. Each $v_a$ belongs to some 
subspace of $V_0$ defined over $\Q$. That is, there exist $(w_{ab})_b$ in $V_0\cap \Q^l$
such that $v_a\in \sum_b \R w_{ab}$. It follows that the elements $w_{ab}\in V_0\cap \Q^l$
generate $V_0$ as an $\R$-vector space. Therefore $V_0$ is defined over $\Q$.

Case $D$ arbitrary: suppose $V=\{x\in \R^l;\sum_{i=1}^l x_iE_i\sim_\R D \}$ is nonempty.
Let $x\in V$. Then $\sum_{i=1}^l x_iE_i=D+\sum_\alpha r_\alpha (f_\alpha)$ for 
finitely many $r_\alpha,f_\alpha$ as above. Since $D$ has rational coefficients, the same 
argument used above shows that once $f_\alpha$ are fixed, there
exists another representation $\sum_{i=1}^l x'_iE_i=D+\sum_\alpha r'_\alpha (f_\alpha)$,
with $x'_i,r_\alpha\in \Q$. In particular, $x'\in V\cap \Q^l$. 
We have $V=x'+V_0$. Since $V_0$ is defined over $\Q$, we conclude that $V$
is an affine subspace of $\R^l$ defined over $\Q$.
\end{proof}

If $D\sim_\Q 0$, then $D$ has rational coefficients. If $D$ has rational coefficients, 
then $D\sim_\Q 0$ if and only if $D\sim_\R 0$ (by Lemma~\ref{1.5}).

Let $D$ be an $\R$-divisor on $X$. Denote 
$D^{=1}=\sum_{d_E=1} E$, $D^{\ne 1}=\sum_{d_E\ne 1} d_E E$,
$D^{<0}=\sum_{d_E<0}d_E E$, $D^{>0}=\sum_{d_E>0}d_E E$. 
The round up (down) of $D$ is defined as
$\lceil D\rceil=\sum_E \lceil d_E\rceil E$ ($\lfloor D\rfloor=\sum_E \lceil d_E\rceil E$),
where for $x\in \R$ we denote $\lfloor x\rfloor=\max\{n\in \Z;n\le x\}$ and 
$\lceil x\rceil=\min\{n\in \Z;x\le n\}$. The fractional part of $D$ is defined as 
$\{D\}=\sum_E \{d_E\}E$, where for $x\in \R$ we denote $\{ x\}=x-\lfloor x\rfloor$.

\begin{defn}
Let $D$ be an $\R$-Weil divisor on $X$. We call $D$
{\em $\R$-Cartier} ({\em $\Q$-Cartier}, {\em Cartier}) if there
exists an open covering $X=\cup_i U_i$ such that $D|_{U_i}\sim_\R 0$
($D|_{U_i}\sim_\Q 0$, $D|_{U_i}\sim 0$) for all $i$.
\end{defn}


\subsection{Complements of effective Cartier divisors}


\begin{lem}\label{ob}
Let $D$ be an effective Cartier divisor on a Noetherian scheme $X$.
Let $U=X\setminus \Supp D$ and consider the open embedding $w\colon U\subseteq X$. Then
\begin{itemize}
\item[1)] $w$ is an affine morphism.
\item[2)] Let $\cF$ be a quasi-coherent $\cO_X$-module.
The natural inclusions $\cF(mD)\subset \cF(nD)$, 
for $m\le n$, form a directed family of $\cO_X$-modules
$(\cF(mD))_{m\in \Z}$, and  
$$
\varinjlim_m\cF(mD)=w_*(\cF|_U).
$$
\item[3)] Let $\pi\colon X\to S$ be a morphism
and $\cF$ a quasi-coherent $\cO_X$-module. Then 
$
\varinjlim_mR^q\pi_*\cF(mD)\isoto R^q(\pi|_U)_*(\cF|_U)
$
for all $q$.
\end{itemize}
\end{lem}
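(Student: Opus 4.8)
The plan is to reduce all three assertions to the affine case, where $\cF(mD)$ is a localization, and then propagate them to higher direct images using standard properties of affine morphisms and of filtered colimits over a Noetherian space. For part 1), I would pick an affine open cover $X=\cup_i U_i$, $U_i=\Spec A_i$, on which $D$ is principal, say $D|_{U_i}=(f_i)$ with $f_i\in A_i$ a non-zerodivisor. Then $\Supp D\cap U_i$ is the zero locus of $f_i$, so $U\cap U_i=\Spec (A_i)_{f_i}$ is affine; since being an affine morphism is local on the target, $w$ is affine.

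For part 2), over $U_i$ the invertible sheaf $\cO_X(mD)$ is free with generator $f_i^{-m}$, so $\cF(mD)|_{U_i}\cong f_i^{-m}\cdot\cF|_{U_i}$, and for $m\le n$ the transition map $\cF(mD)\to\cF(nD)$ is, after these trivializations, multiplication by $f_i^{n-m}$ on $\cF|_{U_i}$; these compose correctly, so $(\cF(mD))_{m\in\Z}$ is a directed family. Since $\cO_X(mD)|_U=\cO_U$, restriction to $U$ gives compatible maps $\cF(mD)\to w_*(w^*\cF(mD))=w_*(\cF|_U)$, hence a canonical map $\varinjlim_m\cF(mD)\to w_*(\cF|_U)$. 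I would check it is an isomorphism on the cover: $X$ is Noetherian, so the filtered colimit of these sheaves is computed on sections, and
$$
\Gamma\big(U_i,\varinjlim_m\cF(mD)\big)=\varinjlim_m f_i^{-m}\Gamma(U_i,\cF)=\Gamma(U_i,\cF)_{f_i}=\Gamma(U\cap U_i,\cF)=\Gamma(U_i,w_*(\cF|_U)),
$$
compatibly with restrictions.

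For part 3), Noetherianity of $X$ makes each $\pi^{-1}(V)$ (for $V\subseteq S$ open) a Noetherian scheme, so $H^q(\pi^{-1}(V),-)$ commutes with filtered colimits; as $R^q\pi_*$ is the sheafification of $V\mapsto H^q(\pi^{-1}(V),-)$ and sheafification commutes with colimits, $R^q\pi_*$ commutes with filtered colimits. Together with part 2) this gives $\varinjlim_m R^q\pi_*\cF(mD)=R^q\pi_*(w_*(\cF|_U))$. Since $w$ is affine by part 1), $R^pw_*(\cF|_U)=0$ for $p>0$, so the Grothendieck--Leray spectral sequence $E_2^{pq}=R^p\pi_*(R^qw_*(\cF|_U))\Longrightarrow R^{p+q}(\pi\circ w)_*(\cF|_U)$ collapses and yields $R^q\pi_*(w_*(\cF|_U))\isoto R^q(\pi|_U)_*(\cF|_U)$, which is the assertion. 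I do not expect a genuine obstacle here; the only steps requiring care are the two interchanges of $\varinjlim$ with a left-exact functor --- with global sections over the Noetherian space $X$ in part 2), and with $R^q\pi_*$ in part 3) --- and both follow from the Noetherian hypothesis in the usual way.
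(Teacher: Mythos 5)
Your proposal is correct and follows essentially the same route as the paper: trivialize $D$ on an affine cover, identify $\varinjlim_m\cF(mD)$ locally with the localization $\Gamma(V,\cF)_{f}$, commute the directed limit with $R^q\pi_*$ (using quasi-compactness/Noetherianity), and finish with the degenerate Leray spectral sequence for the affine morphism $w$. No gaps worth noting.
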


\begin{proof}
Let $X=\cup_\alpha V_\alpha$ be an affine open covering such
$D=(f_\alpha)_\alpha$, for non-zero divisors 
$f_\alpha\in \Gamma(V_\alpha,\cO_{V_\alpha})$ such that 
$f_\alpha f_\beta^{-1}\in \Gamma(V_\alpha\cap V_\beta,\cO_X^\times)$ for all
$\alpha,\beta$.

The set $w^{-1}(V_\alpha)=U\cap V_\alpha=D(f_\alpha)$
is affine, so 1) holds. Statement 2) is local, equivalent to the known
property 
$$
\Gamma(D(f_\alpha),\cF)=
\Gamma(V_\alpha,\cF)_{f_\alpha}=\varinjlim_m\Gamma(V_\alpha,\cF(mD))=
\Gamma(V_\alpha,\varinjlim_m \cF(mD)).
$$
For 3), directed limits commute with cohomology on quasi-compact
topological spaces. Therefore 
$$
\varinjlim_mR^q\pi_*\cF(mD)\isoto R^q\pi_*(\varinjlim_m\cF(mD))=
R^q\pi_*(w_*(\cF|_U)).
$$
Since $w$ is affine, the Leray spectral sequence for $w$ degenerates to
isomorphisms 
$$
R^q\pi_*(w_*(\cF|_U))\isoto R^q(\pi|_U)_*(\cF|_U).
$$
Therefore 3) holds.
\end{proof}


\subsection{Convention on algebraic varieties}


Throughout this paper, a variety is a reduced scheme of finite type over an 
algebraically closed field $k$ of characteristic zero.


\subsection{Explicit Deligne-Du Bois complex for normal crossing varieties}

Let $X$ be a variety with at most {\em normal crossing singularities}. That is, for 
every point $P\in X$, there exist $n\ge 1$, $I\subseteq \{1,\ldots,n\}$, and an 
isomorphism of complete local $k$-algebras
$$
\frac{k[[T_1,\ldots,T_n]] }{ (\prod_{i\in I}T_i) } \isoto \widehat{\cO}_{X,P}.
$$ 
Let $\pi\colon \bar{X}\to X$ be the normalization. For $p\ge 0$, define the 
$\cO_X$-module $\tilde{\Omega}^p_{X/k}$ to be the image of the natural map
$
\Omega^p_{X/k}\to \pi_*\Omega^p_{\bar{X}/k}.
$
We have induced differentials $d\colon \tilde{\Omega}^p_{X/k}\to \tilde{\Omega}^{p+1}_{X/k}$,
and $\tilde{\Omega}^\bullet_{X/k}$ becomes a differential complex of $\cO_X$-modules.
We call the hypercohomology group
$
\bH^r(X,\tilde{\Omega}^\bullet_{X/k})
$
the $r$-th {\em de Rham cohomology group} of $X/k$, and denote it by 
$$
H^r_{DR}(X/k).
$$
If the base field is understood, we usually drop it from notation.
Let $X_\bullet$ be the simplicial algebraic variety induced by $\pi$ (see~\cite{Del74}). 
Its components 
are $X_n=(\bar{X}/X)^{\Delta_n}$, and the simplicial maps are naturally induced.
We have a natural augmentation 
$$
\epsilon\colon X_\bullet\to X.
$$
We have $X_0=\bar{X}$, $X_1=X_0\times_X X_0$, $\epsilon_0=\pi$ and 
$\delta_0,\delta_1\colon X_1\to X_0$ are the natural projections. 
For $p\ge 0$, let $\Omega^p_{X_\bullet}$ be the simplicial $\cO_{X_\bullet}$-module
with components $\Omega^p_{X_n} \ (n\ge 0)$. The $\cO_X$-module
$\epsilon_*(\Omega^p_{X_\bullet})$ is defined as the kernel of the homomorphism
$$
\delta_1^*-\delta_0^*\colon {\epsilon_0}_*\Omega^p_{X_0}\to {\epsilon_1}_*\Omega^p_{X_1}.
$$
By~\cite[Lemme 2]{DJ74}, $\epsilon$ is a smooth resolution, and 
$R^i\epsilon_*(\Omega^p_{X_\bullet})=0$ for $i>0,p\ge 0$.

\begin{lem}
For every $p$, $\tilde{\Omega}^p_X=\epsilon_*(\Omega^p_{X_\bullet})$.
\end{lem}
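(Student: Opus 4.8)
The plan is to prove the identity $\tilde{\Omega}^p_X=\epsilon_*(\Omega^p_{X_\bullet})$ by comparing the two sheaves directly through the normalization map. First I would unwind both definitions in terms of $\pi\colon\bar X\to X$. By definition $\tilde{\Omega}^p_X$ is the image of $\Omega^p_{X/k}\to\pi_*\Omega^p_{\bar X/k}={\epsilon_0}_*\Omega^p_{X_0}$, and $\epsilon_*(\Omega^p_{X_\bullet})$ is the kernel of $\delta_1^*-\delta_0^*\colon{\epsilon_0}_*\Omega^p_{X_0}\to{\epsilon_1}_*\Omega^p_{X_1}$; both are subsheaves of ${\epsilon_0}_*\Omega^p_{X_0}=\pi_*\Omega^p_{\bar X/k}$, so it suffices to show they coincide as subsheaves. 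The inclusion $\tilde{\Omega}^p_X\subseteq\epsilon_*(\Omega^p_{X_\bullet})$ is the easy direction: a section of $\tilde{\Omega}^p_X$ is pulled back from a $p$-form on $X$ itself, hence its two pullbacks to $X_1=\bar X\times_X\bar X$ along $\delta_0,\delta_1$ agree (they are both the pullback along $\epsilon_1$), so it lies in the kernel of $\delta_1^*-\delta_0^*$.

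Next I would establish the reverse inclusion, which is where the normal crossing hypothesis enters. This is a local statement at a point $P\in X$, so I may pass to the formal completion and assume $\widehat{\cO}_{X,P}\cong k[[T_1,\dots,T_n]]/(\prod_{i\in I}T_i)$. The normalization $\bar X$ is then the disjoint union of the $|I|$ smooth branches $\{T_i=0\}$ (one for each $i\in I$), and $\bar X\times_X\bar X$ decomposes into the diagonal branches together with the pairwise intersections $\{T_i=T_j=0\}$ for $i\ne j$. A section $\omega\in({\epsilon_0}_*\Omega^p_{X_0})_P$ is a tuple $(\omega_i)_{i\in I}$ of $p$-forms, one on each branch; the condition $\delta_0^*\omega=\delta_1^*\omega$ says precisely that for each pair $i\ne j$ the restrictions $\omega_i|_{\{T_i=T_j=0\}}$ and $\omega_j|_{\{T_i=T_j=0\}}$ agree. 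I would then show by an explicit computation that such a compatible tuple is exactly the image of some $p$-form on $X$ — i.e., that $\Omega^p_{X/k}\to\pi_*\Omega^p_{\bar X/k}$ surjects onto the kernel of $\delta_1^*-\delta_0^*$. Concretely, on the branch $\{T_i=0\}$ write $\omega_i$ in terms of $dT_j$ ($j\ne i$) and the $\bar\cO$-structure; the gluing conditions along all the codimension-one strata of $\bar X\times_X\bar X$ force the coefficients to be restrictions of a single form $\sum f_J\, dT_J$ with $f_J\in\widehat{\cO}_{X,P}$, which descends to $X$.

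Alternatively — and this is probably the cleanest route — I would invoke the already-quoted fact from \cite[Lemme 2]{DJ74} that $\epsilon$ is a smooth resolution and $R^i\epsilon_*(\Omega^p_{X_\bullet})=0$ for $i>0$, together with the Čech-type exact sequence $0\to\epsilon_*(\Omega^p_{X_\bullet})\to{\epsilon_0}_*\Omega^p_{X_0}\xrightarrow{\delta_1^*-\delta_0^*}{\epsilon_1}_*\Omega^p_{X_1}$, and compare it with the analogous presentation of $\tilde\Omega^p_X$; one reduces to checking that $\Omega^p_{X/k}\to\ker(\delta_1^*-\delta_0^*)$ is surjective, which by Nakayama is checkable on fibers and ultimately amounts to the combinatorial branch computation above. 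The main obstacle is this local surjectivity statement: one must verify that no "extra" compatible tuples of forms on the branches appear beyond those coming from $X$, i.e., that the presentation of $\Omega^p_{X/k}$ as a quotient of a free module over $\cO_X$ is compatible with the branch decomposition. The subtlety is handling the forms $dT_i$ for $i\in I$ themselves (which restrict to nonzero forms on the branches $\{T_j=0\}$, $j\ne i$, and to zero on $\{T_i=0\}$) — one has to check these satisfy the gluing conditions and are accounted for by the image of $\Omega^p_{X/k}$; a careful bookkeeping of which monomials $dT_J$ survive on which stratum settles it.
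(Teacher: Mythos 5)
Your skeleton is the same as the paper's: the easy inclusion $\tilde{\Omega}^p_X\subseteq\epsilon_*(\Omega^p_{X_\bullet})$ from $\pi\circ\delta_0=\pi\circ\delta_1$, localization of the reverse inclusion to the normal crossing model, and the identification of sections of $\epsilon_*(\Omega^p_{X_\bullet})$ with tuples $(\omega_i)$ of forms on the branches satisfying the cycle conditions on pairwise intersections. The gap is that the step carrying all of the content of the lemma --- that every compatible tuple lies in the image of $\Omega^p_X\to\pi_*\Omega^p_{\bar X}$ --- is never actually argued: it is deferred to ``an explicit computation'' and ``a careful bookkeeping''. Worse, the one concrete mechanism you do state is wrong as stated: the gluing conditions do \emph{not} force the coefficients to be restrictions of a single form $\sum_J f_J\,dT_J$ with $f_J\in\widehat{\cO}_{X,P}$, because $dT_j$ pulls back to zero on the branch $\{T_j=0\}$ and on every stratum inside it; the cycle condition on $\{T_i=T_j=0\}$ only constrains those coefficients $f_J$ of $\omega_i$ with $i,j\notin J$. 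For example, for $z_1z_2=0$ in $\bA^2$ the tuple $(dz_2,0)$ is compatible (any $1$-form restricts to zero on the point $z_1=z_2=0$) even though its coefficient functions do not glue; it lies in the image only because the ambient form $dz_2$ itself restricts to zero on the second branch. So ``no extra compatible tuples appear'' is precisely the assertion to be proved, and it needs an actual construction of the lift, which the proposal does not supply; your alternative route via \cite{DJ74} reduces, by your own admission, to the same unproven local surjectivity.

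The paper supplies the missing device: for $X:(\prod_{i=1}^c z_i=0)\subset\bA^{d+1}$ it shows by induction on the number $c$ of components that every compatible tuple is the restriction of a $p$-form on the ambient smooth space $\bA^{d+1}$ --- choose $\omega$ on $\bA^{d+1}$ with $\omega|_{X_c}=\omega_c$, subtract $\omega|_X$ to reduce to a tuple of the form $(\omega_1,\ldots,\omega_{c-1},0)$, use the cycle conditions against $X_c$ to factor out $z_c$ and obtain a compatible tuple for $\prod_{i<c}z_i=0$, and conclude by induction --- and then uses the surjectivity of $\Gamma(\bA^{d+1},\Omega^p_{\bA^{d+1}})\to\Gamma(X,\Omega^p_X)$ to identify this image with $\tilde{\Omega}^p_X$. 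Your formal-local framework (with $k[[T_1,\ldots,T_n]]$ as ambient ring) could host the same induction, or an inclusion--exclusion construction of the lift, but as written the proposal stops exactly where the real work begins; to make it a proof you must carry out that induction or an equivalent explicit lifting argument.
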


\begin{proof} Since $\pi\circ \delta_0=\pi\circ \delta_1$, we obtain an inclusion 
$\tilde{\Omega}^p_X\subseteq \epsilon_*(\Omega^p_{X_\bullet})$. The opposite
inclusion may be checked locally, in an \'etale neighborhood of each point. 
Therefore we may suppose 
$$
X:(\prod_{i=1}^c z_i=0)\subset \bA^{d+1}.
$$
Then $X$ has $c$ irreducible components $X_1,\ldots,X_c$, each of them 
isomorphic to $\bA^d$. The normalization $\bar{X}$ is the disjoint union of the 
$X_i$. Therefore $\Gamma(X,\epsilon_*(\Omega^p_{X_\bullet}))$ consists of 
$c$-uples $(\omega_1,\ldots,\omega_c)$ where 
$\omega_i\in \Gamma(X_i,\Omega^p_{X_i})$ satisfy the cycle condition 
$\omega_i|_{X_i\cap X_j}=\omega_j|_{X_i\cap X_j}$ for every $i<j$.

By induction on $c$, we show that $\Gamma(X,\epsilon_*(\Omega^p_{X_\bullet}))$
is the image of the homomorphism $\Gamma(\bA^{d+1},\Omega^p_{\bA^{d+1}})\to 
\Gamma(\bar{X},\Omega^p_{\bar{X}})$. The case $c=1$ is clear. Suppose $c\ge 2$.
Let $\alpha=(\omega_1,\ldots,\omega_c)$ be an element of 
$\Gamma(X,\epsilon_*(\Omega^p_{X_\bullet}))$. There exists $\omega\in 
\Gamma(\bA^{d+1},\Omega^p_{\bA^{d+1}})$ such that $\omega_c=\omega|_{X_c}$.
Then we may replace $\alpha$ by $\alpha-\omega|_X$, so that 
$$
\alpha=(\omega_1,\ldots,\omega_{c-1},0).
$$
The cycle conditions for pairs $i<c$ give $\omega_i=z_c\eta_i$, for some 
$\eta_i\in \Gamma(X_i,\Omega^p_{X_i})$. The other cycle conditions are equivalent 
to the fact that $(\eta_1,\ldots,\eta_{c-1})\in \Gamma(X',\epsilon_*(\Omega^p_{X'_\bullet}))$,
where $X':(\prod_{i=1}^{c-1} z_i=0)\subset \bA^{d+1}$. By induction, there exists 
$\eta\in \Gamma(\bA^{d+1},\Omega^p_{\bA^{d+1}})$ such that $\eta_i=\eta|_{X_i}$ for
$1\le i\le c-1$. Then $\alpha=z_c\eta|_X$.

The map $\Gamma(\bA^{d+1},\Omega^p_{\bA^{d+1}})\to \Gamma(\bar{X},\Omega^p_{\bar{X}})$ 
factors through the {\em surjection} 
$\Gamma(\bA^{d+1},\Omega^p_{\bA^{d+1}})\to \Gamma(X,\Omega^p_X)$.
Therefore its image is the same as the image of $\Gamma(X,\Omega^p_X)\to 
\Gamma(\bar{X},\Omega^p_{\bar{X}})$.
\end{proof}

It follows that $\tilde{\Omega}^\bullet_X \to R\epsilon_*(\Omega^\bullet_{X_\bullet})$ 
is a quasi-isomorphism. From~\cite{Gr66, Del74} (see~\cite[Th\'eor\`eme 4.5]{DuB81}), we deduce

\begin{thm}\label{NCHR} 
The filtered complex $(\tilde{\Omega}_X^\bullet,F)$, where $F$ is the naive filtration,
induces a spectral sequence in hypercohomology
$$
E_1^{pq}=H^q(X,\tilde{\Omega}^p_X)\Longrightarrow \bH^{p+q}(X,\tilde{\Omega}_X^\bullet)=H^{p+q}_{DR}(X/k).
$$
If $X$ is proper, this spectral sequence degenerates at $E_1$.
\end{thm}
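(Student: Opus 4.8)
The plan is to separate the purely homological content of the statement — the existence and shape of the spectral sequence — from its one geometric input, the degeneration, and to obtain the latter by transporting everything to the smooth simplicial resolution $\epsilon\colon X_\bullet\to X$. For the first part, filter $\tilde\Omega^\bullet_X$ by the naive filtration $F^p\tilde\Omega^\bullet_X=\tilde\Omega^{\ge p}_X$, so that $F^p/F^{p+1}=\tilde\Omega^p_X[-p]$. The complex is bounded, since $\tilde\Omega^p_X$ is a subquotient of $\pi_*\Omega^p_{\bar{X}}$ and hence vanishes for $p>\dim X$; thus $F$ is a finite filtration and the hypercohomology spectral sequence of the filtered complex $(\tilde\Omega^\bullet_X,F)$ is defined and convergent, with
$$
E_1^{pq}=\bH^{p+q}(X,F^p/F^{p+1})=H^q(X,\tilde\Omega^p_X),
$$
and abutment $\bH^{p+q}(X,\tilde\Omega^\bullet_X)$, which equals $H^{p+q}_{DR}(X/k)$ by the definition of de Rham cohomology recalled above. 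No properness is needed for this.

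Next I would pass to the de Rham complex of the simplicial scheme. By the preceding lemma $\tilde\Omega^p_X=\epsilon_*\Omega^p_{X_\bullet}$, and $R^i\epsilon_*\Omega^p_{X_\bullet}=0$ for $i>0$ by \cite[Lemme 2]{DJ74}; hence $\tilde\Omega^p_X\isoto R\epsilon_*\Omega^p_{X_\bullet}$ in the derived category, for every $p$. Equipping both sides with their naive filtrations, the quasi-isomorphism $\tilde\Omega^\bullet_X\to R\epsilon_*\Omega^\bullet_{X_\bullet}$ noted just before the theorem is a filtered quasi-isomorphism, so it induces an isomorphism of spectral sequences from the $E_1$ page onward. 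In particular $\bH^r(X,\tilde\Omega^\bullet_X)\cong \bH^r(X_\bullet,\Omega^\bullet_{X_\bullet})$, and the spectral sequence constructed above is identified with the Hodge--de Rham spectral sequence of the simplicial scheme $X_\bullet$.

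Finally, assume $X$ proper and deduce degeneration. Each face $X_n=(\bar{X}/X)^{\Delta_n}$ is a finite disjoint union of closed strata of $X$: étale-locally, where $X$ is cut out in an affine space by a product of coordinates and $\bar{X}$ is the disjoint union of those coordinate hyperplanes, the iterated fibre product $X_n$ becomes the disjoint union of the coordinate subspaces $\{z_{i_0}=\dots=z_{i_n}=0\}$, which is reduced and smooth; moreover $X_n$ is finite over $X$, hence proper over $k$. So $X_\bullet$ is a proper smooth simplicial variety, and by Deligne's Hodge theory for such objects (\cite[\S 8]{Del74}; see also \cite[Th\'eor\`eme 4.5]{DuB81}) its Hodge--de Rham spectral sequence degenerates at $E_1$; by the identification of the previous step, so does the spectral sequence of $(\tilde\Omega^\bullet_X,F)$. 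This degeneration is the step carrying the real weight: it combines the classical $E_1$-degeneration of the Hodge--de Rham spectral sequence of each smooth proper $X_n$ — over an arbitrary algebraically closed field of characteristic zero one first spreads $X$ out over a finitely generated subring and uses the Lefschetz principle to reduce to $k=\C$, or invokes the purely algebraic argument — with the fact that the differentials in the simplicial direction are strict morphisms of pure Hodge structures, which leaves no room for higher differentials in the assembled spectral sequence. Routine bookkeeping with the two filtrations then completes the proof.
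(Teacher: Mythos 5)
Your proposal is correct and takes essentially the same route as the paper: the paper's argument is exactly that $\tilde{\Omega}^p_X=\epsilon_*\Omega^p_{X_\bullet}$ together with $R^i\epsilon_*\Omega^p_{X_\bullet}=0$ for $i>0$ makes $\tilde{\Omega}^\bullet_X\to R\epsilon_*(\Omega^\bullet_{X_\bullet})$ a (filtered) quasi-isomorphism, after which the degeneration for the proper smooth simplicial variety $X_\bullet$ is quoted from Deligne's Hodge theory and Du Bois. You have only made explicit the identification of the two spectral sequences and the smoothness and properness of the components $X_n$, details the paper leaves to the citations \cite{DJ74,Del74,DuB81}.
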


Note $\tilde{\Omega}^0_X=\cO_X$. If $d=\dim X$, then 
$\tilde{\Omega}^d_X=\pi_*\Omega^d_{\bar{X}}$, which is a locally free $\cO_X$-module
if and only if $X$ has no singularities. If $X$ is non-singular, the natural surjections
$\Omega^p_X\to \tilde{\Omega}^p_X$ are isomorphisms, for all $p$. So our definition of 
de Rham cohomology for varieties with at most normal crossing singularities is consistent 
with Grothendieck's definition~\cite{Gr66} for nonsingular varieties.


\subsection{Differential forms with logarithmic poles}


Let $(X,\Sigma)$ be a {\em log smooth pair}, that is $X$ is a nonsingular variety and $\Sigma$ is an 
effective divisor with at most normal crossing singularities. Denote $U=X\setminus \Sigma$. Let 
$w\colon U\to X$ be the inclusion. Then $w_*(\Omega^\bullet_U)$ is the complex of rational differentials
on $X$ which are regular on $U$. We identify it with the union of $\Omega^\bullet_X\otimes \cO_X(m\Sigma)$,
after all $m\ge 0$.

Let $p\ge 0$. The {\em sheaf of germs of differential $p$-forms on $X$ with
at most logarithmic poles along $\Sigma$}, denoted $\Omega^p_X(\log\Sigma)$ (see~\cite{Del69}), 
is the sheaf whose sections on an open subset $V$ of $X$ are 
$$
\Gamma(V,\Omega^p_X(\log\Sigma))=\{\omega\in \Gamma(V,\Omega^p_X\otimes \cO_X(\Sigma));
d\omega\in \Gamma(V,\Omega^{p+1}_X\otimes \cO_X(\Sigma)) \}.
$$

It follows that $\{ \Omega^p_X(\log\Sigma),d^p\}_p$ becomes a subcomplex of $w_*(\Omega^\bullet_U)$.
It is called the {\em logarithmic de Rham complex of $(X,\Sigma)$}, denoted  by $\Omega^\bullet_X(\log\Sigma)$.

Let $n=\dim X$. Then $\Omega^p_X(\log\Sigma)=0$ if $p\notin [0,n]$. And 
$\Omega_X^n(\log\Sigma)=\Omega^p_X\otimes \cO_X(\Sigma)=\cO_X(K_X+\Sigma)$, 
where $K_X$ is the canonical divisor of $X$. 

\begin{lem}
Let $0\le p\le n$. Then $\Omega^p_X(\log\Sigma)$ is a coherent locally free extension of 
$\Omega^p_U$ to $X$. Moreover, $\Omega^0_X(\log\Sigma)=\cO_X$, 
$\wedge^p\Omega^1_X(\log\Sigma)=\Omega^p_X(\log\Sigma)$, and the wedge product induces a perfect pairing
$$
\Omega^p_X(\log\Sigma)\otimes_{\cO_X} \Omega^{n-p}_X(\log\Sigma)\to \Omega^n_X(\log \Sigma).
$$
\end{lem}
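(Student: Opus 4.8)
The claim is entirely local and bilinear, so the plan is to reduce to the model case and compute directly. First I would recall the standard local description of $\Omega^1_X(\log\Sigma)$: around a point $P$ where $\Sigma$ is cut out by $z_1\cdots z_c=0$ in local coordinates $z_1,\dots,z_n$, the sheaf $\Omega^1_X(\log\Sigma)$ is the free $\cO_X$-module on the basis
$$
\frac{dz_1}{z_1},\dots,\frac{dz_c}{z_c},dz_{c+1},\dots,dz_n.
$$
This is the key structural input, and it is the one mildly delicate point: one must check that the sheaf defined by the section-wise condition $\omega\in\Omega^1_X\otimes\cO_X(\Sigma)$ with $d\omega\in\Omega^2_X\otimes\cO_X(\Sigma)$ coincides with this free module. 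The inclusion $\supseteq$ is immediate since $d(dz_i/z_i)=0$ and each $dz_j$ is regular; for $\subseteq$ one writes a general section of $\Omega^1_X\otimes\cO_X(\Sigma)$, expands in the basis $dz_1,\dots,dz_n$ with coefficients having poles bounded by $\Sigma$, and checks that the condition on $d\omega$ forces each coefficient of $dz_i$ (for $i\le c$) to be divisible by $z_i$. With this local model in hand, local freeness and coherence of $\Omega^p_X(\log\Sigma)$ for all $p$ follow, because the exterior powers of a free module are free.

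Next I would establish that $\Omega^p_X(\log\Sigma)$ is an extension of $\Omega^p_U$: on $U=X\setminus\Sigma$ the divisor is empty, so the defining conditions are vacuous and $\Omega^p_X(\log\Sigma)|_U=\Omega^p_U$. That $\Omega^0_X(\log\Sigma)=\cO_X$ is immediate from the definition (a function with at most a logarithmic pole and regular differential is regular). For $\wedge^p\Omega^1_X(\log\Sigma)=\Omega^p_X(\log\Sigma)$, I would argue again in the local model: the left side is the free $\cO_X$-module on the wedges $\omega_{i_1}\wedge\cdots\wedge\omega_{i_p}$ of the distinguished basis elements $\omega_i\in\{dz_i/z_i,\, dz_i\}$, and one checks that these wedges are precisely a basis of $\Omega^p_X(\log\Sigma)$ — the inclusion $\wedge^p\Omega^1_X(\log\Sigma)\subseteq\Omega^p_X(\log\Sigma)$ holds because the wedge of closed-or-regular forms again satisfies the pole bound and its differential does too (Leibniz), and equality follows by comparing the local bases.

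Finally, for the perfect pairing: in the local model $\Omega^n_X(\log\Sigma)$ is the free rank-one module generated by $\frac{dz_1}{z_1}\wedge\cdots\wedge\frac{dz_c}{z_c}\wedge dz_{c+1}\wedge\cdots\wedge dz_n$, which equals $\cO_X(K_X+\Sigma)$ locally. The wedge product $\Omega^p_X(\log\Sigma)\otimes\Omega^{n-p}_X(\log\Sigma)\to\Omega^n_X(\log\Sigma)$ sends a basis element $\omega_S\otimes\omega_T$, indexed by complementary subsets $S,T$ of $\{1,\dots,n\}$ of sizes $p$ and $n-p$, to $\pm$ the generator, and to $0$ when $S\cap T\neq\emptyset$. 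Thus in the local bases the pairing is, up to sign, the standard duality pairing between a free module and its dual, which is perfect. Perfectness is a local statement, so this suffices. I expect the verification of the local normal form for $\Omega^1_X(\log\Sigma)$ — i.e.\ that the sheaf-theoretic definition via the pole bound on $\omega$ and $d\omega$ reproduces the free module on the logarithmic basis — to be the only step requiring genuine care; everything after that is linear algebra over $\cO_X$ carried out fiberwise in the model coordinates.
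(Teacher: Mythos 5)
Your proposal takes essentially the same route as the paper: both reduce every statement to the local (completed) model $X=\bA^n_k$, $\Sigma=(\prod_{i\in J}z_i)$, where $\Omega^p_X(\log\Sigma)$ is the free $\cO_{X,P}$-module on the basis $dz^I/\prod_{i\in J\cap I}z_i$, after which coherence, local freeness, the restriction to $U$, the case $p=0$, and the perfectness of the wedge pairing are linear algebra; the paper simply cites \cite[Properties 2.2]{EVlect} for this basis description, which is the computation you sketch. One caution: your step ``equality follows by comparing the local bases'' for $\wedge^p\Omega^1_X(\log\Sigma)=\Omega^p_X(\log\Sigma)$ is not a formal consequence of the $p=1$ computation, since $\Omega^p_X(\log\Sigma)$ is defined by the pole condition on $\omega$ and $d\omega$; the inclusion $\Omega^p_X(\log\Sigma)\subseteq\wedge^p\Omega^1_X(\log\Sigma)$ requires the analogous direct check (or an induction on the number of components) for each $p$, which is exactly the content of the Esnault--Viehweg computation the paper invokes.
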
 

\begin{proof}
The $\cO_X$-module $\Omega^p_X(\log\Sigma)$ is coherent, being a subsheaf of
$\Omega^p_X\otimes \cO_X(\Sigma)$. The statements 
may be checked near a fixed point, after passing to completion. Therefore it suffices
to verify the statements at the point $P=0$ for $X=\bA^n_k$ and $\Sigma=(\prod_{i\in J}z_i)$.
As in~\cite[Properties 2.2]{EVlect} for example, it can be checked that in this case 
$\Omega^p_X(\log\Sigma)_P$ is the free $\cO_{X,P}$-module with basis 
$$
\{ \frac{dz^I}{\prod_{i\in J\cap I}z_i}; I \subseteq \{1,\ldots,n\}, |I|=p\},
$$ 
where for $I=\{i_1<\cdots<i_p\}$, $dz^I$ denotes $dz_{i_1}\wedge\cdots\wedge dz_{i_p}$.
And $\prod_{i\in \emptyset}z_i=1$. All the statements follow in this case.
\end{proof}

\begin{thm} \cite{AH55,Gr66,Del69,EVlect} \label{AH}
The inclusion
$
\Omega^\bullet_X(\log \Sigma)\subset w_*(\Omega^\bullet_U)
$
is a quasi-isomorphism.
\end{thm}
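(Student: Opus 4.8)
The plan is to reduce to an explicit computation in one variable. Being a quasi-isomorphism is a local condition on $X$ which is insensitive to faithfully flat base change, and both $\Omega^\bullet_X(\log\Sigma)$ and $w_*(\Omega^\bullet_U)$ commute with \'etale base change — the latter because $w$ is affine (Lemma~\ref{ob}) and $w_*$ then commutes with flat base change. Since, by the definition of normal crossings, $(X,\Sigma)$ is \'etale-locally of the form $(\bA^n,(z_1\cdots z_k=0))$ with $0\le k\le n$, I would first reduce to that case. If $k=0$ there is nothing to prove. If $k\ge 1$, I would write $\bA^n=\bA^1\times\bA^{n-1}$ with coordinate $z_1$ on the first factor, so that $\Sigma=(\{0\}\times\bA^{n-1})\cup(\bA^1\times\Sigma')$ with $\Sigma'=(z_2\cdots z_k=0)$ a normal crossings divisor with $k-1$ components; the de Rham complex of the product is the tensor product of the de Rham complexes of the factors, this is compatible with the logarithmic subcomplexes, and — since $\bA^1\setminus\{0\}$ and $\bA^{n-1}\setminus\Sigma'$ are affine — with the functor $w_*$. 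So the map in question becomes the total complex of the external tensor product of the corresponding maps for $(\bA^1,\{0\})$ and for $(\bA^{n-1},\Sigma')$. All terms in sight are flat $\cO$-modules (the logarithmic sheaves are locally free; by Lemma~\ref{ob}, $w_*\Omega^p_U$ is a filtered colimit of locally free sheaves), so by the K\"unneth formula the external product is a quasi-isomorphism as soon as each factor is. Inducting on $k$ disposes of the second factor, and I am reduced to the single case $X=\bA^1$, $\Sigma=\{0\}$.

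For that case I would argue on stalks. At the generic point and at closed points $\ne 0$ the map is the identity, as these points lie in $U$. At the origin, writing $A=\cO_{\bA^1,0}$ and $K=k(z)$, one has $(w_*\cO_U)_0=K$ and $(w_*\Omega^1_U)_0=K\,dz$ (every rational function is regular on a punctured neighbourhood of $0$), while the logarithmic complex at $0$ is $[A\xrightarrow{f\mapsto f'dz}A\,\tfrac{dz}{z}]$, included by $h\,\tfrac{dz}{z}\mapsto\tfrac{h}{z}\,dz$. On $\cH^0$ both sides are the constants $k$. For $\cH^1$, surjectivity follows from partial fractions: writing $g\,dz=p(z)\,dz+\sum_{a,j}c_{a,j}(z-a)^{-j}\,dz$, the polynomial part and every term with $j\ge 2$ is $d$ of an element of $K$, while the simple-pole terms equal $c_{0,1}\tfrac{dz}{z}$ and (for $a\ne 0$) $c_{a,1}\tfrac{z}{z-a}\cdot\tfrac{dz}{z}$, which lie in $\Omega^1_{\bA^1}(\log\{0\})_0$ since $\tfrac{z}{z-a}\in A$. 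For injectivity, if $h\,\tfrac{dz}{z}=dg$ with $h\in A$ and $g\in K$, then $g'=h/z$ has at worst a simple pole at $0$; but the derivative of a rational function has vanishing residues and only poles of order $\ge 2$, so $g'$ — hence $g$ — is regular at $0$, i.e. $g\in A$, and $h\,\tfrac{dz}{z}$ was already exact in the logarithmic complex. This finishes the base case and the theorem.

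I expect the reductions to be routine; the only step needing attention is the K\"unneth reduction, which is clean here because the complement of the boundary stays affine throughout the induction, so the external tensor product is literally a tensor product of modules and no higher Tor appears. The substantive point is the $\cH^1$-injectivity in the one-variable case — the statement that a logarithmic $1$-form which becomes exact only after higher-order poles are allowed was in fact exact among logarithmic forms. Residue calculus is what forces this through: an exact rational form has no residues, so clearing a pole of order $\ge 2$ never creates a logarithmic term, and a logarithmic form which is exact after enlarging the allowed poles must already be so logarithmically. One could instead carry out the pole-reduction directly on $\bA^n$ with $\Sigma=(z_1\cdots z_k=0)$; this avoids K\"unneth but forces one to bookkeep forms that are logarithmic along some components, of higher polar order along others, and carry extraneous poles — precisely the complication the factorization removes.
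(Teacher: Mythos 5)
Your one--variable computation at the end is correct (it is, in essence, the $n=1$ case of the local lemma the paper invokes from Esnault--Viehweg), but the two reduction steps that carry the real content in higher dimension are not justified, and as stated they do not work in the Zariski/algebraic setting. A quasi-isomorphism of complexes of sheaves must be verified on stalks (equivalently, on sections over a basis of affine opens, since $\cH^i$ is the sheafification of $V\mapsto H^i(\Gamma(V,\cdot))$), and here the differentials are only $k$-linear, not $\cO_X$-linear, so none of the usual flatness arguments apply. Concretely, for the K\"unneth step on $\bA^2$ with $\Sigma=(z_1z_2)$: the stalk at the origin of $w_*(\Omega^\bullet_U)$ is the de Rham complex of $k[z_1,z_2]_{(z_1,z_2)}[\tfrac{1}{z_1z_2}]$, which strictly contains the tensor product of the two one-variable stalks --- for instance $\tfrac{1}{1+z_1+z_2}$ lies in it but is not a finite sum $\sum_i f_i(z_1)g_i(z_2)$ --- and de Rham-type cohomology does not commute with the extra localization (compare $H^1$ of $\Omega^\bullet$ for $k[z]$ and for $k[z,z^{-1}]$). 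If instead you check the map on global sections over affines, K\"unneth only covers opens of the form $V_1\times V_2$, and in the Zariski topology products of opens are neither a basis nor cofinal among neighborhoods of a point of $\bA^n$; this is exactly why the analytic proof of this statement (where polydiscs do form a neighborhood basis) does not transplant. Your remark that flatness and affineness make the external product ``literally a tensor product of modules'' addresses Tor-vanishing, not this issue. The same objection hits the opening claim that quasi-isomorphism ``is insensitive to faithfully flat base change'': for complexes with non-$\cO$-linear differentials this is false --- completion $k[z]_{(z)}\to k[[z]]$ is faithfully flat, yet it kills the nonzero class of $\tfrac{dz}{z-1}$ in the stalk at $0$ of $\cH^1(\Omega^\bullet_{\bA^1})$ --- so the \'etale-local reduction also requires a genuine argument, not just flatness.

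The paper circumvents precisely these difficulties by a different device: it filters $w_*(\Omega^\bullet_U)$ by pole order, $\cK_m=\Omega^\bullet_X(\log\Sigma)\otimes\cO_X(m\Sigma)$, and proves that each graded quotient is acyclic, raising the pole order along one component $H_j$ at a time as in \cite[Lemma 2.10]{EVlect}. There the relevant quotient complexes are coherent sheaves supported on $\Sigma$ with $\cO_X$-linear induced differentials, so localizing and passing to completion at a point is legitimate, and the resulting local verification is an explicit computation of which your residue/partial-fraction argument is the one-variable incarnation. So the substantive computation you isolate is the right one, but to reduce to it you need something like the pole-order filtration (or another mechanism converting the statement into one about coherent, $\cO$-linear complexes), rather than K\"unneth plus \'etale descent as proposed.
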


\begin{proof} We claim that $\Omega^\bullet_X(\log\Sigma)\otimes \cO_X(D)$ is a subcomplex
of $w_*(\Omega^\bullet_U)$, for every divisor $D$ supported by $\Sigma$.
Indeed, the sheaves in question are locally free, so it suffices to check the statement over the open
subset $X\setminus \Sing \Sigma$, whose complement has codimension at least two in $X$.
Therefore we may suppose $\Sigma$ is non-singular. After passing to completion at a fixed point, it 
suffices to check the claim at $P=0$ for $X=\bA^1_k$ and $\Sigma=(z)$. This follows from the formula
$$
d(1\otimes z^m)=m \cdot \frac{dz}{z}  \otimes z^m \ (m\in \Z).
$$

We obtain an increasing filtration of $w_*(\Omega^\bullet_U)$ by sub-complexes
$$
\cK_m=\Omega^*_X(\log\Sigma)\otimes \cO_X(m\Sigma)\ (m\ge 0).
$$ 
We claim that the quotient complex
$
\cK_m /\cK_{m-1} 
$
is acyclic, for every $m>0$. Since $\cK_0=\Omega^\bullet_X(\log\Sigma)$ and 
$\cup_{m\ge 0}\cK_m=w_*(\Omega^\bullet_U)$, this implies that the quotient complex
$
w_*(\Omega^\bullet_U) / \Omega^\bullet_X(\log\Sigma)
$
is acyclic, or equivalently $\Omega^\bullet_X(\log \Sigma)\subset w_*(\Omega^\bullet_U)$
is a quasi-isomorphism.

To prove that $\cK_m /\cK_{m-1} \ (m>0)$ is acyclic, note that we may work locally 
near a fixed point, and we may also pass to completion (since the components of the 
two complexes are coherent). Therefore it suffices to verify the claim at $P=0$ for 
$X=\bA^n_k$ and $\Sigma=(\prod_{i\in J}z_i)$. If we denote $H_j=(z_j)$, the
claim in this case follows from the stronger statement of ~\cite[Lemma 2.10]{EVlect}:
the inclusion
$\Omega^*_X(\log\Sigma)\otimes \cO_X(D)\subset \Omega^*_X(\log\Sigma)\otimes \cO_X(D+H_j)$
is a quasi-isomorphism, for every effective divisor $D$ supported by $\Sigma$ and every
$j\in J$.
\end{proof}

\begin{thm}\cite{Del71} \label{E1D} The filtered complex 
$(\Omega^\bullet_X(\log\Sigma),F)$, where $F$ is the naive filtration, 
induces a spectral sequence in hypercohomology
$$
E_1^{pq}=H^q(X,\Omega_X^p(\log \Sigma))\Longrightarrow 
\bH^{p+q}(X,\Omega_X^\bullet(\log \Sigma)).
$$
If $X$ is proper, this spectral sequence degenerates at $E_1$.
\end{thm}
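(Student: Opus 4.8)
The plan is to reduce the statement to a numerical identity over $\C$ and then invoke Deligne's mixed Hodge theory. For any filtered complex one has $\dim_k\bH^n\le\sum_{p+q=n}\dim_k E_1^{pq}$ for every $n$, with equality for all $n$ if and only if the spectral sequence degenerates at $E_1$ (the associated graded of the limit filtration on $\bH^n$ is $\oplus_{p+q=n}E_\infty^{pq}$, and each $E_\infty^{pq}$ is a subquotient of $E_1^{pq}$). The groups $H^q(X,\Omega^p_X(\log\Sigma))$ and $\bH^{p+q}(X,\Omega^\bullet_X(\log\Sigma))$ commute with extension of the algebraically closed ground field, since $X$ is proper and the logarithmic complex is defined by explicit local generators, so its formation is compatible with flat base change; hence the numerical identity is unaffected by enlarging $k$ and descends from any extension. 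As $(X,\Sigma)$ is defined over a subfield $k_0\subseteq k$ finitely generated over $\Q$, and every such field embeds into $\C$, we may assume $k=\C$.

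Over $\C$ I would first identify the abutment with the singular cohomology of $U$. By Theorem~\ref{AH} the inclusion $\Omega^\bullet_X(\log\Sigma)\subset w_*(\Omega^\bullet_U)$ is a quasi-isomorphism, and $w$ is affine by Lemma~\ref{ob}, so $Rw_*(\Omega^\bullet_U)=w_*(\Omega^\bullet_U)$; the Leray spectral sequence for $w$ then gives $\bH^n(X,\Omega^\bullet_X(\log\Sigma))\cong\bH^n(U,\Omega^\bullet_U)$, which by Grothendieck's comparison theorem~\cite{Gr66} equals $H^n(U^{an},\C)$. It remains to prove that $\dim_\C H^n(U,\C)=\sum_{p+q=n}\dim_\C H^q(X,\Omega^p_X(\log\Sigma))$ for all $n$.

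For this I would follow Deligne~\cite{Del71} and exhibit $(\Omega^\bullet_X(\log\Sigma),F)$, with $F$ the naive filtration, as the Hodge part of a cohomological mixed Hodge complex. One equips the logarithmic complex with the weight filtration $W_m\Omega^p_X(\log\Sigma)=\Omega^{p-m}_X\wedge\Omega^m_X(\log\Sigma)$ (forms with at most $m$ logarithmic poles) and establishes the Poincar\'e residue isomorphism $\mathrm{gr}^W_m\Omega^\bullet_X(\log\Sigma)\simeq(a_m)_*\Omega^\bullet_{\Sigma^{(m)}}[-m]$, where $a_m\colon\Sigma^{(m)}\to X$ is the disjoint union of the $m$-fold intersections of the components of $\Sigma$, a smooth proper variety. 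Using the classical Hodge decomposition of the compact K\"ahler manifolds $\Sigma^{(m)}$ — which comes from harmonic theory and is not itself a consequence of the present theorem — the triple $(\Omega^\bullet_X(\log\Sigma),W,F)$ becomes a cohomological mixed Hodge complex, which is precisely Deligne's construction of the mixed Hodge structure on $H^\bullet(U)$. Then $E_1$-degeneration of the $F$-spectral sequence is formal: it is the strictness of the Hodge filtration furnished by the theorem of the two filtrations. Concretely, the weight spectral sequence degenerates at $E_2$ because its $E_1$-differentials are morphisms of pure Hodge structures of distinct weights, hence vanish; each graded piece $\mathrm{gr}^W_m H^n(U)$ then carries a pure Hodge structure, and summing the Hodge numbers of these graded pieces computes simultaneously $\dim_\C H^n(U,\C)$ and $\sum_{p+q=n}\dim_\C H^q(X,\Omega^p_X(\log\Sigma))$.

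The hard part is the verification of the mixed-Hodge-complex axioms: that $W$, $F$ and the residue maps induce on each weight-graded piece the correctly shifted pure Hodge structure, compatibly with the combinatorics of the strata of $\Sigma$; this is the technical core of ``Th\'eorie de Hodge II'', which I would carry over rather than reprove. A self-contained alternative avoiding transcendental methods is the logarithmic Deligne--Illusie argument: spread $(X,\Sigma)$ out over a finitely generated $\Z$-algebra, specialize to a closed point of residue characteristic $p>\dim X$, lift the resulting log pair to the Witt vectors $W_2$, and use the logarithmic Cartier isomorphism to split $\Omega^\bullet_X(\log\Sigma)$ in the derived category, obtaining $E_1$-degeneration in characteristic $p$; one returns to characteristic $0$ by upper semicontinuity of $\sum_{p+q=n}\dim E_1^{pq}$ against the constancy of $\dim\bH^n$ in the flat family. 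In that approach the obstacle becomes the characteristic-$p$ bookkeeping around the Cartier isomorphism and the construction of the $W_2$-lift of the pair.
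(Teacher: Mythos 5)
Your proposal is correct and is essentially the paper's own argument: the paper likewise reduces to $k=\C$ by the Lefschetz principle (your spreading-out/flat base change of the ground field is the concrete form of this) and then quotes Deligne~\cite{Del71} together with GAGA/the comparison theorem for the degeneration. Your sketch of the weight filtration, residue isomorphisms and the two-filtrations argument, and the alternative Deligne--Illusie route, are expansions of that same citation rather than a different proof.
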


\begin{proof}
If $k=\C$, the claim follows from~\cite{Del71} and GAGA. By the Lefschetz principle, 
the claim extends to the case when $k$ is a field of characteristic zero.
\end{proof}

\begin{lem}\label{rs} For each $p\ge 0$, we have a short exact sequence
$$
0\to \cI_\Sigma\otimes \Omega_X^p(\log \Sigma)\to \Omega^p_X\to \tilde{\Omega}^p_\Sigma\to 0.
$$
\end{lem}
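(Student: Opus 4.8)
The plan is to write down both maps geometrically and then check exactness by the standard local computation in a normal crossing chart. For the first map, since $\cI_\Sigma=\cO_X(-\Sigma)$ is invertible, I would tensor the defining inclusion $\Omega^p_X(\log\Sigma)\subset\Omega^p_X\otimes\cO_X(\Sigma)$ with $\cI_\Sigma$ to get $\cI_\Sigma\otimes\Omega^p_X(\log\Sigma)\hookrightarrow \Omega^p_X\otimes\cO_X(\Sigma)\otimes\cO_X(-\Sigma)=\Omega^p_X$; injectivity is automatic because $\cI_\Sigma$ is locally free. For the second map, I would take the composite $\Omega^p_X\to\Omega^p_\Sigma\to\pi_*\Omega^p_{\bar\Sigma}$ of the natural restriction to $\Sigma$ followed by pullback along the normalization $\pi\colon\bar\Sigma\to\Sigma$. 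The restriction $\Omega^p_X\to\Omega^p_\Sigma$ is surjective (it factors through $\Omega^p_X\otimes\cO_\Sigma$ and is the $p$-th exterior power of the surjection $\Omega^1_X\otimes\cO_\Sigma\to\Omega^1_\Sigma$), so the image of the composite equals the image of $\Omega^p_\Sigma\to\pi_*\Omega^p_{\bar\Sigma}$, which is $\tilde\Omega^p_\Sigma$ by definition; hence the second map is a surjection onto $\tilde\Omega^p_\Sigma$.

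It then remains to identify the kernel of $\Omega^p_X\to\tilde\Omega^p_\Sigma$ with the image of the first map. Since all the sheaves involved are coherent, this is a local statement, so I would pass to an \'etale (or formal) neighborhood of a point and reduce to $X=\bA^n_k$, $\Sigma=(z_1\cdots z_c)$, for which $\bar\Sigma=\bigsqcup_{i=1}^c\{z_i=0\}$, $\Omega^p_X$ is free on $\{dz^I:|I|=p\}$, and $\Omega^p_X(\log\Sigma)$ is free on $\{dz^I/\prod_{i\in I,\,i\le c}z_i\}$ by the lemma recalled above. In this chart the composite $\Omega^p_X\to\pi_*\Omega^p_{\bar\Sigma}$ is just the tuple of restrictions to the hyperplanes $\{z_i=0\}$ with $1\le i\le c$; writing $\omega=\sum_I f_I\,dz^I$, restriction to $\{z_i=0\}$ kills the summands with $i\in I$ and replaces $f_I$ by $f_I|_{z_i=0}$, so $\omega$ lies in the kernel exactly when $z_i\mid f_I$ for every $I$ with $i\notin I$, i.e. exactly when $\prod_{i\le c,\,i\notin I}z_i\mid f_I$ for all $I$. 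On the other hand, multiplying the above basis of $\Omega^p_X(\log\Sigma)$ by the generator $z_1\cdots z_c$ of $\cI_\Sigma$ shows that $\cI_\Sigma\otimes\Omega^p_X(\log\Sigma)$ is the free submodule of $\Omega^p_X$ generated by $\{\prod_{i\le c,\,i\notin I}z_i\,dz^I\}$, which consists of precisely those $\sum_I f_I\,dz^I$ with $\prod_{i\le c,\,i\notin I}z_i\mid f_I$. The two descriptions coincide, so the sequence is exact; as the construction of the two maps is canonical, the local sequences glue to the asserted global one.

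I do not expect a genuine obstacle: the two maps are essentially forced, and the content is the bookkeeping of index sets in the local computation together with the (standard) surjectivity of $\Omega^p_X\to\Omega^p_\Sigma$ coming from the conormal sequence. The mild subtlety worth stating carefully is that the composite $\Omega^p_X\to\pi_*\Omega^p_{\bar\Sigma}$, rather than the restriction $\Omega^p_X\to\Omega^p_\Sigma$ alone, is the right map to use, since $\tilde\Omega^p_\Sigma$ is by definition the image inside $\pi_*\Omega^p_{\bar\Sigma}$.
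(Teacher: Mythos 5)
Your proposal is correct and follows essentially the same route as the paper: the same two maps (the inclusion $\Omega^p_X(\log\Sigma)\subset\Omega^p_X\otimes\cO_X(\Sigma)$ twisted by $\cI_\Sigma$, and restriction to the normalization $\pi_*\Omega^p_{\bar\Sigma}$), the same identification of the image with $\tilde\Omega^p_\Sigma$ via the surjection $\Omega^p_X\to\Omega^p_\Sigma$, and the same local/formal reduction to the chart $X=\bA^n$, $\Sigma=(z_1\cdots z_c)$ with the explicit logarithmic bases; your chart computation just spells out the kernel comparison that the paper leaves implicit.
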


\begin{proof} Let $\pi\colon \bar{\Sigma}\to \Sigma$ be the normalization. We claim
that we have an exact sequence
$$
0\to \cI_\Sigma\otimes \Omega_X^p(\log \Sigma)\to \Omega^p_X\to \pi_*\Omega^p_{\bar{\Sigma}},
$$
where the second arrow is induced by the inclusion $\Omega_X^p(\log \Sigma)\subseteq \Omega^p_X\otimes \cO_X(\Sigma)$,
and the third arrow is the restriction homomorphism $\omega\mapsto \omega|_{\bar{\Sigma}}$.
Indeed, denote $\cK=\Ker(\Omega^p_X\to \pi_*\Omega^p_{\bar{\Sigma}})$.
We have to show that $\cI_\Sigma\otimes \Omega_X^p(\log \Sigma)=\cK$. 
This is a local statement which can be checked locally near each point, and since the sheaves are coherent,
we may also pass to completion. Therefore it suffices to check the equality at $P=0$ in the special case
$X=\bA^n_k$, $\Sigma=(\prod_{j\in J}z_j)$. From the explicit description of local bases for the logarithmic
sheaves, the claim holds in this case.

Finally, we compute the image of the restriction.
The restriction factors through the {\em surjection} $\Omega^p_X\to \Omega^p_\Sigma$.
Therefore the image coincides with the image of $\Omega^p_\Sigma\to \pi_*\Omega^p_{\bar{\Sigma}}$,
which by definition is $\tilde{\Omega}^p_\Sigma$.
\end{proof}


\subsection{The cyclic covering trick}

Let $X$ be an irreducible normal variety, let $T$ be a $\Q$-Weil divisor on $X$ 
such that $T\sim_\Q 0$. Let $r\ge 1$ be minimal such that $rT\sim 0$. Choose a 
rational function $\varphi\in k(X)^\times$ such that $(\varphi)=rT$. Denote by 
$$
\tau'\colon X'\to X
$$
the normalization of $X$ in the field extension $k(X)\subseteq k(X)(\sqrt[r]{\varphi})$.
The normal variety $X'$ is irreducible, since $r$ is minimal.
Choose $\psi\in k(X')^\times$ such that $\psi^r={\tau'}^*\varphi$. One computes
$$
\tau'_*\cO_{X'}=\oplus_{i=0}^{r-1}\cO_X(\lfloor iT\rfloor)\psi^i.
$$
The finite morphism $\tau'$ is Galois, with Galois group cyclic of order $r$. 
Moreover, $\tau'$ is \'etale over $X\setminus \Supp\{T\}$. 

Suppose now that $(X,\Sigma)$ is a log smooth pair structure on $X$, and the 
fractional part $\{T\}$ is supported by $\Sigma$. Then $\tau'$ is flat, $X'$ has 
at most quotient singularities (in the \'etale topology), and $X'\setminus {\tau'}^{-1}\Sigma$ 
is nonsingular. Let $\mu\colon Y\to X'$ be an embedded resolution of singularities of 
$(X',{\tau'}^{-1}\Sigma)$. If we denote $\tau=\tau'\circ \mu$, then 
$\tau^{-1}(\Sigma)=\Sigma_Y$ is a normal crossings divisor and 
$\mu\colon Y\setminus \Sigma_Y\to X'\setminus {\tau'}^{-1}\Sigma$ 
is an isomorphism. We obtain a commutative diagram
\[ 
\xymatrix{
X' \ar[d]_{\tau'} & Y\ar[l]_\mu \ar[dl]^\tau  \\ 
X
} \]

\begin{lem}\label{keyct}\cite{RC81,Amb13}
$R^q\tau_*\Omega^p_Y(\log \Sigma_Y)=0$ for $q>0$, and 
\begin{align*}
\tau_*\Omega^p_Y(\log \Sigma_Y) & =
\Omega^p_X(\log \Sigma_X)\otimes \tau'_*\cO_{X'}        \\
 & \simeq \oplus_{i=0}^{r-1}\Omega^p_X(\log \Sigma_X)
\otimes \cO_X(\lfloor iT\rfloor).
\end{align*}
\end{lem}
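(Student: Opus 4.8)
The plan is to reduce everything to the local structure of the cyclic cover and invoke the logarithmic vanishing already at hand. First I would observe that the statement is local on $X$, since $\tau$ is proper and the sheaves in question are coherent; so I may replace $X$ by an affine chart and assume $X = \bA^n_k$ with $\Sigma$ a union of coordinate hyperplanes. The key geometric input is that over $X \setminus \Supp\{T\}$ the cover $\tau'$ is étale, so there $\mu$ is an isomorphism and $\tau$ is étale; in particular $R^q\tau_* \Omega^p_Y(\log\Sigma_Y)$ is concentrated, as far as trouble is concerned, over $\Supp\{T\} \subseteq \Sigma$. Near a point of $\Sigma$ I would pass to completion and use the normal form: after a further étale localization the covering $\tau'$ looks like a toric (Kummer-type) cover $z_j \mapsto z_j^{?}$ in the variables cutting out the relevant components of $\Sigma$, with the remaining variables untouched; an embedded resolution $\mu$ of the resulting quotient-singular $X'$ is then a toric resolution of a simplicial cone, and $Y \setminus \Sigma_Y \to X' \setminus {\tau'}^{-1}\Sigma$ is an isomorphism by construction.

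The heart of the argument is then the computation $R\tau_* \Omega^p_Y(\log\Sigma_Y) \simeq \Omega^p_X(\log\Sigma_X) \otimes \tau'_*\cO_{X'}$ in this toric/Kummer situation. Here I would proceed in two steps. Step one: handle $\mu$, i.e.\ show $R\mu_* \Omega^p_Y(\log\Sigma_Y) = \Omega^p_{X'}(\log {\tau'}^{-1}\Sigma)$, where the right-hand side denotes the sheaf of log-differentials on the (quotient-singular but log smooth in the orbifold sense) pair — this is the content of the references \cite{RC81,Amb13}, and amounts to the fact that log-differential forms on a toric variety extend uniquely across a toric resolution, with no higher direct images, because the exceptional locus is again toric with torus-invariant boundary and the relevant cohomology is computed combinatorially on cones and vanishes. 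Step two: handle $\tau'$, i.e.\ compute the pushforward along the Kummer cover. Since $\tau'$ is finite and flat, $R^q\tau'_* = 0$ for $q>0$ automatically, and $\tau'_*\Omega^p_{X'}(\log {\tau'}^{-1}\Sigma)$ decomposes under the $\Z/r$-action into eigensheaves; a direct local computation with the explicit basis $dz^I / \prod_{i \in J \cap I} z_i$ of $\Omega^p_X(\log\Sigma)$ (as recalled in the excerpt) against $\psi^i = $ the monomial $\prod z_j^{?}$ identifies the $i$-th eigensheaf with $\Omega^p_X(\log\Sigma_X) \otimes \cO_X(\lfloor iT\rfloor)$, the twist being exactly the integral part that survives when one multiplies a log form by the fractional monomial and demands regularity. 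Composing Steps one and two via the Leray spectral sequence for $\tau = \tau' \circ \mu$ (which degenerates because $R^q\mu_* = 0$ for $q>0$ and $\tau'$ is affine) yields both the vanishing $R^q\tau_* = 0$ for $q>0$ and the displayed formula.

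The main obstacle I expect is Step one: controlling $R\mu_*$ of the log-differential sheaf across the resolution of the quotient singularities of $X'$. The point $X'$ need not be smooth, only quotient-singular, so $\Omega^p_{X'}(\log)$ has to be interpreted as the sheaf of invariant (orbifold) log forms on a smooth Kummer cover, and one must check that this is precisely what $\mu_*\Omega^p_Y(\log\Sigma_Y)$ computes and that there is no $R^{>0}$. In the toric language this is a statement about the cohomology of $\Omega^p(\log)$ of toric varieties being insensitive to toric birational modifications — a known fact, and the reason the references \cite{RC81,Amb13} are cited — but it is the step where the quotient singularities genuinely enter and where a careless local computation would go wrong. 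Everything else (reduction to the local toric model, the eigensheaf decomposition, the Leray argument) is routine once this is in place.
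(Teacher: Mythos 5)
The paper itself does not prove Lemma~\ref{keyct}: it quotes it from \cite[Lemme 1.2, 1.3]{RC81}, where it is proved under the extra hypotheses that $X$ is projective and $\Sigma$ is a \emph{simple} normal crossing divisor, and refers to \cite{Amb13} for removing projectivity and for reducing the normal crossings case to the simple normal crossings case by \'etale base change. Your sketch reconstructs essentially the argument of those sources, in the toroidal spirit of \cite{Amb13}: local Kummer/toric model, eigensheaf decomposition under the cyclic group of order $r$, and vanishing of higher direct images along the resolution of the quotient singularities, assembled by Leray for $\tau=\tau'\circ\mu$. Two points need more care than you give them. First, $\mu$ is a \emph{given} embedded resolution of $(X',{\tau'}^{-1}\Sigma)$, not a toric one, so you cannot literally say it ``is a toric resolution of a simplicial cone''; you must either establish that $\mu_*\Omega^p_Y(\log\Sigma_Y)$ and the vanishing of $R^{>0}\mu_*$ are independent of the choice of log resolution (e.g.\ by dominating the given $\mu$ and a toric resolution by a common log resolution and invoking the birational invariance of $Rf_*\Omega^p(\log)$ for proper birational morphisms of log smooth pairs), or argue directly that the statement for one resolution implies it for all; this is precisely the content your Step one must carry, and it is where \cite{RC81,Amb13} do the actual work, so it should not be compressed to ``a known combinatorial fact''. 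Second, the reduction to the monomial model ($X=\bA^n$, $\Sigma$ a union of coordinate hyperplanes, $\varphi$ a monomial) is not Zariski-local: it requires \'etale or formal base change, both to straighten the branches of $\Sigma$ (this is exactly the normal crossings versus simple normal crossings issue the paper delegates to \cite{Amb13}) and to extract an $r$-th root of the unit factor of $\varphi$; since higher direct images of coherent sheaves commute with flat base change this is legitimate, but it must be said. With these two points made precise, your outline matches the proof the paper relies on, including the sign conventions giving the twist $\cO_X(\lfloor iT\rfloor)$ in the $i$-th eigensheaf.
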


This statement is proved in~\cite[Lemme 1.2, 1.3]{RC81} with two extra assumptions:
$X$ is projective, and $\Sigma$ is a {\em simple} normal crossing divisor, that is
it has normal crossing singularities and its irreducible components are smooth.
One can show that the projectivity assumption is not necessary, and the 
normal crossings case reduces to the simple normal crossing case, by 
\'etale base change (see~\cite{Amb13}).


\section{Injectivity for open embeddings}


Let $(X,\Sigma)$ be a log smooth pair, with $X$ proper. Denote $U=X\setminus \Sigma$.

\begin{thm}\label{oi} The restriction homomorphism 
$H^q(X,\cO_X(K_X+\Sigma))\to H^q(U,\cO_U(K_U))$ is injective,
for all $q$.
\end{thm}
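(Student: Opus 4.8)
The plan is to realize the restriction map as an edge map between two Hodge-to-de Rham spectral sequences and to exploit the fact that both degenerate at $E_1$. Write $n=\dim X$. On $X$ we have the logarithmic de Rham complex $\Omega^\bullet_X(\log\Sigma)$, whose top term is $\Omega^n_X(\log\Sigma)=\cO_X(K_X+\Sigma)$ and whose zeroth term is $\cO_X$; on $U$ we have the ordinary de Rham complex $\Omega^\bullet_U$, with top term $\cO_U(K_U)$. By Theorem~\ref{AH} the inclusion $\Omega^\bullet_X(\log\Sigma)\hookrightarrow w_*(\Omega^\bullet_U)$ is a quasi-isomorphism, so the hypercohomology of the logarithmic complex on $X$ computes $\bH^\bullet(U,\Omega^\bullet_U)=H^\bullet_{DR}(U)$, i.e. the two complexes have the same de Rham cohomology. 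Both the naive (stupid) filtration on $\Omega^\bullet_X(\log\Sigma)$ and the one on $\Omega^\bullet_U$ give spectral sequences $E_1^{p,q}=H^q(X,\Omega^p_X(\log\Sigma))\Rightarrow H^{p+q}_{DR}(U)$ and $E_1^{p,q}=H^q(U,\Omega^p_U)\Rightarrow H^{p+q}_{DR}(U)$ respectively, and they are compatible with the restriction morphism of complexes. The first degenerates at $E_1$ by Theorem~\ref{E1D} (here $X$ is proper); the second degenerates at $E_1$ because $U$ is the complement of a normal crossings divisor in a proper smooth variety, which is exactly the content of Theorem~\ref{E1D} via the quasi-isomorphism of Theorem~\ref{AH} (the abutment is the same, and the $E_1$-term on the $U$ side is $H^q(U,\Omega^p_U)$, which by the same quasi-isomorphism and degeneration has the right total dimension).

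The key point is then a dimension count. Degeneration at $E_1$ means
$$
\dim H^{r}_{DR}(U)=\sum_{p+q=r}\dim H^q(X,\Omega^p_X(\log\Sigma))=\sum_{p+q=r}\dim H^q(U,\Omega^p_U).
$$
Therefore, for each fixed $(p,q)$, the restriction map $H^q(X,\Omega^p_X(\log\Sigma))\to H^q(U,\Omega^p_U)$ is a map between vector spaces whose dimensions add up, term by term, to the same total; since each such map is induced by the global restriction morphism of filtered complexes and the associated graded of that morphism on the abutment is injective (the abutments are literally equal), a standard argument on the $E_\infty$-pages forces each $\mathrm{gr}^p H^{p+q}_{DR}(U)$-comparison to be an isomorphism, hence each $H^q(X,\Omega^p_X(\log\Sigma))\to H^q(U,\Omega^p_U)$ is an isomorphism. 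Specializing to $p=n$ gives that
$$
H^q(X,\cO_X(K_X+\Sigma))=H^q(X,\Omega^n_X(\log\Sigma))\longrightarrow H^q(U,\Omega^n_U)=H^q(U,\cO_U(K_U))
$$
is an isomorphism, in particular injective, for all $q$.

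The main obstacle is making the last deduction clean: I must argue that the morphism of $E_1$-spectral sequences, which on abutments is the identity isomorphism $H^\bullet_{DR}(U)\to H^\bullet_{DR}(U)$ (via Theorem~\ref{AH}), is already an isomorphism on $E_1$. The honest way is to compare the associated graded pieces of the two filtrations on $H^r_{DR}(U)$: since both filtrations degenerate at $E_1$, we have $\mathrm{gr}^p_FH^r=E_\infty^{p,r-p}=E_1^{p,r-p}$ on each side, the morphism is filtration-preserving and an isomorphism on the total space, so it is an isomorphism on each graded piece, hence $H^q(X,\Omega^p_X(\log\Sigma))\isoto H^q(U,\Omega^p_U)$. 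One subtlety to check is that the two stupid filtrations on $H^r_{DR}(U)$ genuinely agree under the quasi-isomorphism of Theorem~\ref{AH}: this holds because the inclusion $\Omega^\bullet_X(\log\Sigma)\subseteq w_*\Omega^\bullet_U$ is a filtered quasi-isomorphism for the naive filtrations (each graded piece $\Omega^p_X(\log\Sigma)\to w_*\Omega^p_U$ being a quasi-isomorphism of sheaves placed in a single degree, which is Theorem~\ref{AH} degree by degree). Once this filtered-quasi-isomorphism statement is recorded, the injectivity — indeed the isomorphism — follows formally, and no duality is needed, in contrast to the original Esnault–Viehweg argument.
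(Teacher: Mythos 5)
Your overall strategy (compare the two naive-filtration spectral sequences through the quasi-isomorphism of Theorem~\ref{AH}) begins along the same lines as the paper, but the key step is wrong. The spectral sequence $E_1^{pq}=H^q(U,\Omega^p_U)\Rightarrow \bH^{p+q}(U,\Omega^\bullet_U)$ does \emph{not} degenerate at $E_1$, and Theorem~\ref{E1D} does not transfer to it via Theorem~\ref{AH}: a quasi-isomorphism of complexes identifies abutments, not $E_1$-terms. Your auxiliary claim that $\Omega^\bullet_X(\log\Sigma)\subseteq w_*(\Omega^\bullet_U)$ is a \emph{filtered} quasi-isomorphism for the naive filtrations is false: the graded piece in degree $p$ is the inclusion $\Omega^p_X(\log\Sigma)\subseteq w_*(\Omega^p_U)$, which is far from an isomorphism, since $w_*(\Omega^p_U)$ allows poles of arbitrary order along $\Sigma$ (Theorem~\ref{AH} holds only for the total complexes; that is its whole point). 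Concretely, take $X=\bP^1$, $\Sigma=\{\infty\}$, $U=\bA^1$, $p=n=1$, $q=0$: then $H^0(X,\cO_X(K_X+\Sigma))=H^0(\bP^1,\cO_{\bP^1}(-1))=0$ while $H^0(U,\Omega^1_U)=k[t]\,dt$ is infinite dimensional, and $H^0(U,\cO_U)=k[t]$ already shows the $U$-side $E_1$-terms are not finite dimensional; so the dimension count, the term-by-term comparison, and the asserted isomorphisms $H^q(X,\Omega^p_X(\log\Sigma))\isoto H^q(U,\Omega^p_U)$ all collapse. Only injectivity in top degree $p=n$ is true, which is what the theorem claims.

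The gap is repairable, and the repair is exactly the paper's argument: you only need degeneration on the $X$-side. By Theorem~\ref{E1D}, degeneration at $E_1$ implies that the edge map $\beta\colon \bH^{q+n}(X,F^n\Omega^\bullet_X(\log\Sigma))\to \bH^{q+n}(X,\Omega^\bullet_X(\log\Sigma))$ is injective. Compose with the isomorphism $\alpha\colon \bH^{q+n}(X,\Omega^\bullet_X(\log\Sigma))\to \bH^{q+n}(X,w_*(\Omega^\bullet_U))$ of Theorem~\ref{AH}; since the inclusion of complexes preserves the naive filtrations, $\alpha\circ\beta$ factors through $\bH^{q+n}(X,F^nw_*(\Omega^\bullet_U))=H^q(X,w_*(\Omega^n_U))\simeq H^q(U,\Omega^n_U)$ (the last isomorphism because $w$ is affine), and the factoring map is precisely the restriction $H^q(X,\Omega^n_X(\log\Sigma))\to H^q(U,\Omega^n_U)$. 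Injectivity of $\alpha\circ\beta$ then forces injectivity of this restriction. Note that this uses nothing about the $U$-side spectral sequence and yields injectivity only, not an isomorphism.
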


\begin{proof} Consider the inclusion of filtered differential complexes of $\cO_X$-modules
$$
(\Omega_X^\bullet(\log \Sigma),F)\subset (w_*(\Omega^\bullet_U),F),
$$
where $F$ is the naive filtration of a complex. Let $n=\dim X$.
The inclusion $F^n\subseteq F^0$ induces a commutative diagram
\[ 
\xymatrix{
\bH^{q+n}(X,F^n\Omega^\bullet_X(\log \Sigma)) \ar[r]^\beta\ar[d]_{\alpha^n} & 
\bH^{q+n}(X,\Omega^\bullet_X(\log \Sigma)) \ar[d]^\alpha \\
\bH^{q+n}(X,F^nw_*(\Omega^\bullet_U))  \ar[r] &  
\bH^{q+n}(X,w_*(\Omega^\bullet_U)) 
} \]
By Theorem~\ref{AH}, $\alpha$ is an isomorphism. Theorem~\ref{E1D}
implies that $\beta$ is injective. Therefore $\alpha\circ \beta$ injective. 
Therefore $\alpha^n$ is injective.

But $F^n\Omega^\bullet_X(\log \Sigma)=\Omega^n_X(\log\Sigma)[-n]$
and $F^nw_*(\Omega^\bullet_U)=w_*(\Omega^n_U)[-n]$. Therefore $\alpha^n$
becomes
$$
\alpha^n\colon H^q(X,\Omega^n_X(\log\Sigma))\to H^q(X,w_*(\Omega^n_U))
$$
The morphism $w\colon U\subset X$ is affine, so
$H^q(X,w_*(\Omega^n_U) )\to H^q(U,\Omega^n_U)$ is an isomorphism.
Therefore $\alpha^n$ becomes the restriction map
$$
\alpha^n\colon H^q(X,\Omega^n_X(\log\Sigma))\to H^q(U,\Omega^n_U).
$$
\end{proof}

\begin{cor}\label{oit} Let $T$ be a $\Q$-divisor on $X$ 
such that $T\sim_\Q 0$ and $\Supp\{T\}\subseteq \Sigma$. 
In particular, $T|_U$ has integer coefficients. 
Then the restriction homomorphism 
$$
H^q(X,\cO_X(K_X+\Sigma+\lfloor T\rfloor))\to H^q(U,\cO_U(K_U+T|_U))
$$ 
is injective, for all $q$.
\end{cor}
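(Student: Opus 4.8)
The plan is to reduce to Theorem~\ref{oi} by means of the cyclic covering trick applied to $T$. Since $T\sim_\Q 0$ and $\Supp\{T\}\subseteq\Sigma$, we are precisely in the situation preceding Lemma~\ref{keyct}: let $r\ge 1$ be minimal with $rT\sim 0$, choose $\varphi\in k(X)^\times$ with $(\varphi)=rT$, and form the finite cyclic cover $\tau'\colon X'\to X$ together with a log resolution $\mu\colon Y\to X'$ of $(X',{\tau'}^{-1}\Sigma)$, so that $\tau=\tau'\circ\mu\colon Y\to X$ is a proper morphism, $(Y,\Sigma_Y)$ with $\Sigma_Y=\tau^{-1}(\Sigma)$ is a log smooth pair, $Y$ is proper, and $\mu$ restricts to an isomorphism $V:=Y\setminus\Sigma_Y\isoto X'\setminus{\tau'}^{-1}\Sigma$. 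Because $\tau'$ is \'etale over $X\setminus\Supp\{T\}\supseteq U$, the induced map $\tau|_V\colon V\to U$ is a finite \'etale cover of degree $r$ and $V=\tau^{-1}(U)$. Put $n=\dim X=\dim Y$; then $\Omega^n_X(\log\Sigma)=\cO_X(K_X+\Sigma)$, $\Omega^n_Y(\log\Sigma_Y)=\cO_Y(K_Y+\Sigma_Y)$, $\Omega^n_V=\cO_V(K_V)$ and $\Omega^n_U=\cO_U(K_U)$.

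The next step is to apply Theorem~\ref{oi} to $(Y,\Sigma_Y)$, which gives that the restriction $H^q(Y,\cO_Y(K_Y+\Sigma_Y))\to H^q(V,\cO_V(K_V))$ is injective for every $q$, and then to identify both sides. By Lemma~\ref{keyct} with $p=n$ one has $R^q\tau_*\cO_Y(K_Y+\Sigma_Y)=0$ for $q>0$ and $\tau_*\cO_Y(K_Y+\Sigma_Y)\simeq\bigoplus_{i=0}^{r-1}\cO_X(K_X+\Sigma+\lfloor iT\rfloor)$, so the Leray spectral sequence for $\tau$ yields
$$H^q(Y,\cO_Y(K_Y+\Sigma_Y))\simeq\bigoplus_{i=0}^{r-1}H^q\bigl(X,\cO_X(K_X+\Sigma+\lfloor iT\rfloor)\bigr).$$
Restricting the isomorphism of Lemma~\ref{keyct} to $U$, where $\Sigma$ becomes empty, $\lfloor iT\rfloor|_U=iT|_U$, and $\tau$ becomes the finite morphism $\tau|_V$, which has no higher direct images on quasi-coherent sheaves, one obtains in the same way
$$H^q(V,\cO_V(K_V))\simeq\bigoplus_{i=0}^{r-1}H^q\bigl(U,\cO_U(K_U+iT|_U)\bigr).$$

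To finish, it remains to check that under these two identifications the injective map $H^q(Y,\cO_Y(K_Y+\Sigma_Y))\to H^q(V,\cO_V(K_V))$ becomes the direct sum of the restriction maps $H^q(X,\cO_X(K_X+\Sigma+\lfloor iT\rfloor))\to H^q(U,\cO_U(K_U+iT|_U))$ over $0\le i\le r-1$; granting this, each summand is injective, and the summand $i=1$ is exactly the homomorphism in the statement. (When $r=1$, $T\sim 0$ is an integral principal divisor and the statement reduces directly to Theorem~\ref{oi} through the isomorphism $\cO_X(\lfloor T\rfloor)\isoto\cO_X$ given by $\varphi$, compatible with restriction to $U$; so we may assume $r\ge 2$.) This compatibility is the one genuinely delicate point: the two decompositions are the eigensheaf decompositions of $\tau_*\cO_{X'}$ and $(\tau|_V)_*\cO_V$ under the order-$r$ cyclic Galois group, the $i$-th summand being the $\psi^i$-isotypic piece, and since $\tau|_V$ is the base change of $\tau$ along the open immersion $U\subseteq X$, both the Galois action and the section $\psi$ restrict compatibly, so the homomorphism induced in cohomology is Galois-equivariant and hence preserves the grading by $i$. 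Everything else in the argument is formal.
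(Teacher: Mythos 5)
Your proposal is correct and follows essentially the same route as the paper: form the cyclic cover $\tau\colon Y\to X$ associated to $T$ as in paragraph 1-H, apply Theorem~\ref{oi} to $(Y,\Sigma_Y)$, push down via Lemma~\ref{keyct} and the (degenerate) Leray spectral sequence to identify both sides with direct sums over the eigensheaves, and extract the $i=1$ summand. The only difference is that you spell out the Galois-equivariance ensuring the restriction map respects the eigenspace decomposition, a point the paper leaves implicit.
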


\begin{proof} We use the notations of paragraph 1.H.
Denote $V=\tau^{-1}(U)=Y\setminus \Sigma_Y$. 
By Theorem~\ref{oi}, the restriction 
$$
H^q(Y,\cO_Y(K_Y+\Sigma_Y))\to H^q(V,\cO_V(K_V))
$$
is injective. By the Leray spectral sequence and Lemma~\ref{keyct}, 
the restriction
$$
H^q(X,\tau_*\cO_Y(K_Y+\Sigma_Y))\to H^q(U,\tau_*\cO_V(K_V))
$$
is injective. Equivalently, the direct sum of restrictions
$$
\oplus_{i=0}^{r-1} (H^q(X,\cO_X(K_X+\Sigma+\lfloor iT\rfloor))\to H^q(U,\cO_U(K_U+iT|_U))
$$
is injective. For $i=1$, we obtain the claim.
\end{proof}

\begin{thm}\label{oitlog}
Let $X$ be a proper non-singular variety.
Let $U$ be an open subset of $X$ such that $X\setminus U$
is a normal crossings divisor with irreducible components $(E_i)_i$. 
Let $L$ be a Cartier divisor on $X$ such that 
$L\sim_\R K_X+\sum_i b_iE_i$, with $0<b_i\le 1$ for all $i$.
Then the restriction homomorphism
$$
H^q(X,\cO_X(L))\to H^q(U,\cO_U(L|_U))
$$ 
is injective, for all $q$. 
\end{thm}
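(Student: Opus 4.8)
The plan is to reduce the statement to Corollary~\ref{oit} by a rationality argument. Write $\Sigma=\sum_i E_i$ for the reduced divisor $X\setminus U$, so that $(X,\Sigma)$ is a log smooth pair with $X$ proper. Fix a canonical divisor $K_X$ and set $G=K_X+\Sigma-L$; this is an integral divisor, and from $L\sim_\R K_X+\sum_i b_iE_i$ we get $G\sim_\R\sum_i(1-b_i)E_i$, with $0\le 1-b_i<1$ for every $i$. The key reduction is the following: it suffices to produce a $\Q$-divisor $\Theta$, supported on $\Sigma$, with all coefficients in $[0,1)$, and with $\Theta\sim_\Q G$. Indeed, given such $\Theta$, put $T:=\Theta-G$. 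Then $T\sim_\Q 0$; since $G$ is integral, $\{T\}=\Theta$, so $\Supp\{T\}\subseteq\Sigma$; and $\lfloor T\rfloor=-G$, hence $K_X+\Sigma+\lfloor T\rfloor=K_X+\Sigma-G=L$. Corollary~\ref{oit} applied to $T$ then gives that $H^q(X,\cO_X(L))\to H^q(U,\cO_U(K_U+T|_U))$ is injective for all $q$; and since $\{T\}$ is supported on $\Sigma$ we have $T|_U=\lfloor T\rfloor|_U=-G|_U=L|_U-K_U$, so this map is exactly the restriction homomorphism $H^q(X,\cO_X(L))\to H^q(U,\cO_U(L|_U))$ of the theorem.

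It remains to construct $\Theta$. By Lemma~\ref{1.5}, the set $V=\{(y_i)\in\R^l;\ \sum_i y_iE_i\sim_\R G\}$ is an affine subspace of $\R^l$ defined over $\Q$, and it is nonempty since $(1-b_i)_i\in V$. Let $S=\{i;\ b_i=1\}$, and set $V''=V\cap\bigcap_{i\in S}\{y_i=0\}$. Because $1-b_i=0$ for $i\in S$, the point $(1-b_i)_i$ lies in $V''$; being a nonempty intersection of affine subspaces defined over $\Q$, $V''$ is again defined over $\Q$, so its rational points are dense in it. Pick a rational point $(y_i)_i\in V''$ close enough to $(1-b_i)_i$ that $y_i\in(0,1)$ for every $i\notin S$; since $y_i=0$ for $i\in S$, all the $y_i$ lie in $[0,1)$. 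Then $\Theta:=\sum_i y_iE_i$ is supported on $\Sigma$, has coefficients in $[0,1)$, and satisfies $\Theta\sim_\R G$; as $\Theta$ and $G$ have rational coefficients, $\Theta\sim_\Q G$ by the remark following Lemma~\ref{1.5}. This is the required divisor, and the theorem follows.

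The only delicate point, and the only place where the hypothesis $0<b_i\le 1$ is really used, is this construction of $\Theta$: a rational perturbation of $(1-b_i)_i$ could push a coordinate with $b_i=1$ (hence $1-b_i=0$) to a negative value, spoiling effectivity of $\Theta$, or could push some coordinate to $\ge 1$. The awkward case is $b_i=1$, and it is handled by first cutting $V$ down to the coordinate subspace $\{y_i=0;\ i\in S\}$ and using that this subspace, being defined over $\Q$, still meets every neighbourhood of $(1-b_i)_i$ in a rational point. Everything else in the argument is a formal consequence of Corollary~\ref{oit}.
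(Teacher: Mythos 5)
Your proof is correct and is essentially the paper's own argument: both reduce to Corollary~\ref{oit} by using Lemma~\ref{1.5} to perturb the real coefficients to rational ones inside the $\Q$-defined affine subspace, taking care not to move the coefficients equal to $1$ (your $y_i=1-b'_i$ with $y_i=0$ forced on $S$ is exactly the paper's choice $b'_i=b_i$ for $b_i\in\Q$, just parametrized through $G=K_X+\Sigma-L$ instead of $L-K_X$). The resulting divisor $T$ and the application of Corollary~\ref{oit} coincide with the paper's.
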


\begin{proof} Choose a labeling of the components, say $E_1,\ldots,E_l$.
Since $L-K_X$ has integer coefficients, it follows by Lemma~\ref{1.5} that the set 
$$
V=\{x\in \R^l;L\sim_\R K_X+\sum_{i=1}^l x_iE_i\}
$$ 
is a non-empty affine linear subspace of $\R^l$ defined over $\Q$.
Then $(b_1,\ldots,b_l)\in V\cap (0,1]^l$ can be approximated by 
$(b'_1,\ldots,b'_l)\in V\cap (0,1]^l\cap \Q^l$, such that $b'_i=b_i$
if $b_i\in \Q$. Since $0\sim_\R -L+K_X+\sum_i b'_iE_i$ and the right hand side has
rational coefficients, it follows that $0\sim_\Q -L+K_X+\sum_i b'_iE_i$. 

In conclusion, $L\sim_\Q K_X+\sum_i b'_iE_i$ and $0<b'_i\le 1$ for all $i$.
Set $\Sigma=\sum_i E_i$ and $T=L-K_X-\sum_i b'_iE_i$.
Then $T\sim_\Q 0$, $\{T\}=\sum_i \{-b'_i\}E_i$ and 
$L=K_X+\Sigma +\lfloor T\rfloor$. Corollary~\ref{oit} gives the claim.
\end{proof}

\begin{rem}
Let $U\subseteq U'\subseteq X$ be another open subset. From the commutative diagram
\[ 
\xymatrix{
H^q(X,\cO_X(L)) \ar[rr]\ar[dr] & & H^q(U,\cO_U(L|_U)) \\
          & H^q(U',\cO_{U'}(L|_{U'})) \ar[ur] &
} \]
it follows that $H^q(X,\cO_X(L))\to H^q(U',\cO_{U'}(L|_{U'}))$ is injective for all $q$.
\end{rem}

\begin{rem} Recall that for an $\cO_X$-module $\cF$,
$\Gamma_\Sigma(X,\cF)$ is defined as the kernel of 
$\Gamma(X,\cF)\to \Gamma(U,\cF|_U)$.
The functor $\Gamma_\Sigma(X,\cdot)$ is left exact. Its derived functors, 
denoted $(H^i_\Sigma(X,\cF))_{i\ge 0}$, are called the cohomology of 
$X$ modulo $U$, with coefficients in $\cF$. 
For every $\cF$ we have long exact sequences 
$$
0\to \Gamma_\Sigma(X,\cF)\to \Gamma(X,\cF)\to \Gamma(U,\cF|U)\to 
H_\Sigma^1(X,\cF)\to H^1(X,\cF)\to H^1(U,\cF|U)\to\cdots
$$
Therefore Theorem~\ref{oitlog} says that the homomorphism 
$
H^q_\Sigma(X,\cO_X(L))\to H^q(X,\cO_X(L))
$
is zero for all $q$. Equivalently, $\Gamma_\Sigma(X,\cO_X(L))=0$, 
and for all $q$ we have short exact sequences 
$$
0\to H^q(X,\cO_X(L))\to H^q(U,\cO_U(L|_U))\to H^{q+1}_\Sigma(X,\cO_X(L))\to 0.
$$
\end{rem}

\begin{rem}\label{refor} Theorem~\ref{oitlog} is also equivalent to the following 
statement, which generalizes the original result of Esnault and Viehweg~\cite[Theorem 5.1]{EVlect}:
let $D$ be an effective Cartier divisor supported by $\Sigma$. Then the long exact sequence induced
in cohomology by the short exact sequence 
$
0\to \cO_X(L)\to \cO_X(L+D)\to \cO_D(L+D)\to 0
$
breaks up into short exact sequences
$$
0\to H^q(X,\cO_X(L))\to H^q(X,\cO_X(L+D))\to H^q(D,\cO_D(L+D))\to 0 \ (q\ge 0).
$$
Indeed, let $D$ be as above. We have a commutative diagram
\[ 
\xymatrix{
H^q(X,\cO_X(L)) \ar[r]^\alpha\ar[d]^\beta  & H^q(X,\cO_X(L+D))  
\ar[d] \\ 
H^q(U,\cO_U(L|_U)) \ar[r]^\gamma       & H^q(U,\cO_U((L+D)|_U))
} \]
Since $D$ is disjoint from $U$, $\gamma$ is an isomorphism.
By Theorem~\ref{oitlog}, $\beta$ is injective. Therefore $\gamma\circ \beta$
is injective. It follows that $\alpha$ is injective.
Conversely, suppose $H^q(X,\cO_X(L))\to H^q(X,\cO_X(L+D))$ is injective for
all divisors $D$ supported by $X\setminus U$. Then $H^q(X,\cO_X(L))\to H^q(X,\cO_X(L+m\Sigma))$
is injective for every $m\ge 0$. Lemma~\ref{dl} implies the injectivity of 
$$
H^q(X,\cO_X(L))\to \varinjlim_m H^q(X,\cO_X(L+m\Sigma)).
$$
By Lemma~\ref{ob},  this is isomorphic to the homomorphism 
$H^q(X,\cO_X(L))\to H^q(U,\cO_U(L|_U))$.
\end{rem}

\begin{cor}
Let $D$ be an effective Cartier divisor supported by $\Sigma$. 
Then 
$$
0\to H^q(X,\cO_X(K_X+\Sigma))\to H^q(X,\cO_X(K_X+\Sigma+D))\to 
H^q(D,\cO_D(K_X+\Sigma+D))\to 0
$$
is a short exact sequence, for all $q$.
\end{cor}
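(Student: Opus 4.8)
The plan is to recognize this corollary as the special case of the reformulation in Remark~\ref{refor} obtained by taking $L=K_X+\Sigma$. Write $\Sigma=\sum_i E_i$, so that $U=X\setminus\Sigma$ has complement the normal crossings divisor $\Sigma$ with irreducible components $(E_i)_i$. The divisor $K_X+\Sigma$ is Cartier, and trivially $K_X+\Sigma\sim_\R K_X+\sum_i b_iE_i$ with $b_i=1$ for all $i$; in particular $0<b_i\le 1$, so the hypotheses of Theorem~\ref{oitlog} are satisfied. This is precisely the configuration allowed by the present paper (all boundary coefficients equal to $1$), whereas the original Esnault--Viehweg statement required $b_i<1$ on the support of $D$. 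Hence the content of the corollary is a direct instance of Theorem~\ref{oitlog}, and the only verification needed is this bookkeeping of coefficients.

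Once the hypothesis check is done, I would invoke Remark~\ref{refor}: the long exact cohomology sequence attached to the short exact sequence of sheaves
$$
0\to \cO_X(K_X+\Sigma)\to \cO_X(K_X+\Sigma+D)\to \cO_D(K_X+\Sigma+D)\to 0
$$
breaks into short exact pieces exactly because $H^q(X,\cO_X(K_X+\Sigma))\to H^q(X,\cO_X(K_X+\Sigma+D))$ is injective for every $q$. For completeness I would also recall the one-line reason for the latter injectivity, already contained in Remark~\ref{refor}: the restriction to $U$ factors as
$$
H^q(X,\cO_X(K_X+\Sigma))\to H^q(X,\cO_X(K_X+\Sigma+D))\to H^q\bigl(U,\cO_U((K_X+\Sigma+D)|_U)\bigr),
$$
and since $D$ is disjoint from $U$ the second map is an isomorphism onto $H^q(U,\cO_U((K_X+\Sigma)|_U))$, so the composite is the injection of Theorem~\ref{oitlog}; therefore the first map is injective too.

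There is essentially no obstacle remaining at this level: the substantive work—removing the strict inequality $b_i<1$—has already been carried out in Theorem~\ref{oi} via the Atiyah--Hodge lemma (Theorem~\ref{AH}) and the degeneration of the logarithmic Hodge-to-de Rham spectral sequence (Theorem~\ref{E1D}), and then transported through the cyclic covering trick in Corollary~\ref{oit} and Theorem~\ref{oitlog}. The only point worth stating explicitly is that the corollary is the geometrically transparent shape of the result, matching the ``restriction to $U=X\setminus\Sigma$'' picture with $L=K_X+\Sigma$, and that it specializes further, combined with Serre vanishing, to the vanishing corollary in the introduction when $U$ lies in an affine open subset.
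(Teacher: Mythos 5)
Your proposal is correct and coincides with the paper's own proof, which is exactly the one-line application of Remark~\ref{refor} with $L=K_X+\Sigma$; the coefficient check ($b_i=1\le 1$, so Theorem~\ref{oitlog} applies) and the factorization through the restriction to $U$ are precisely the content of that remark. Nothing further is needed.
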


\begin{proof} By Remark~\ref{refor} for $L=K_X+\Sigma$. 
\end{proof}

\begin{cor}
The homomorphism $\Gamma(X,\cO_X(K_X+2\Sigma))\to \Gamma(\Sigma,
\cO_\Sigma(K_X+2\Sigma))$ is surjective. 
\end{cor}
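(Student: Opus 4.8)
The plan is to deduce this directly from the preceding Corollary (equivalently, from Remark~\ref{refor} applied with $L=K_X+\Sigma$), by taking $D=\Sigma$ and reading off the case $q=0$. So the proof will be one line once the set-up is in place.

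First I would check that $\Sigma$ itself is an admissible choice of $D$. Since $\Sigma$ is a reduced normal crossings divisor, it is in particular an effective Cartier divisor whose support is $\Sigma$ — locally it is cut out by a product of distinct coordinates, which is a nonzerodivisor — so the hypothesis ``$D$ effective Cartier supported by $\Sigma$'' of the preceding Corollary is met with $D=\Sigma$. Alternatively, in the notation of Remark~\ref{refor}, the Cartier divisor $L=K_X+\Sigma=K_X+\sum_i E_i$ satisfies $L\sim_\R K_X+\sum_i b_iE_i$ with $b_i=1$ for all $i$, so Theorem~\ref{oitlog} applies to this $L$ and the long exact sequence induced by $0\to \cO_X(L)\to \cO_X(L+\Sigma)\to \cO_\Sigma(L+\Sigma)\to 0$ breaks up into short exact sequences.

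With this choice, $K_X+\Sigma+D=K_X+2\Sigma$, and the preceding Corollary yields the short exact sequence
$$
0\to H^q(X,\cO_X(K_X+\Sigma))\to H^q(X,\cO_X(K_X+2\Sigma))\to H^q(\Sigma,\cO_\Sigma(K_X+2\Sigma))\to 0
$$
for every $q$. Specializing to $q=0$ gives exactly the surjectivity of $\Gamma(X,\cO_X(K_X+2\Sigma))\to \Gamma(\Sigma,\cO_\Sigma(K_X+2\Sigma))$, which is the claim.

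There is no real obstacle here: all the content has already been extracted in Theorem~\ref{oitlog} and its reformulation in Remark~\ref{refor}; the only thing to verify is the elementary fact that a reduced normal crossings divisor is an effective Cartier divisor, so that the three-term sequence above is genuinely exact and the preceding Corollary applies verbatim. Geometrically this recovers a Tankeev-type lifting statement with $Y=\Sigma$, which is worth pointing out in a remark following the proof.
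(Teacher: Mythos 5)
Your proposal is correct and is exactly the paper's (implicit) argument: the statement is the case $D=\Sigma$, $q=0$ of the preceding Corollary, which itself is Remark~\ref{refor} applied with $L=K_X+\Sigma$, and your verification that $\Sigma$ is an effective Cartier divisor supported by $\Sigma$ (automatic since $X$ is nonsingular and $\Sigma$ is reduced and effective) is the only point to check. Nothing further is needed.
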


If $\Sigma$ is the general member of a base point free linear
system, this is the original result of Tankeev~\cite[Proposition 1]{Tank71}.


\section{Differential forms of intermediate degree}


Let $(X,\Sigma)$ be a log smooth pair such that $X$ is proper
and $U=X\setminus \Sigma$ is contained in an affine open subset of $X$.

\begin{thm}\label{ov}
$
H^q(X,\Omega^p_X(\log \Sigma))=0  \text{ for }p+q>\dim X.
$
In particular, $H^q(X,\cO_X(K_X+\Sigma))=0$ for $q>0$.
\end{thm}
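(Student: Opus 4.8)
The plan is to exploit the perfect pairing $\Omega^p_X(\log\Sigma)\otimes\Omega^{n-p}_X(\log\Sigma)\to\Omega^n_X(\log\Sigma)=\cO_X(K_X+\Sigma)$ from the lemma in paragraph 1.G, together with Serre duality on the proper nonsingular variety $X$, to convert the desired vanishing into a statement about global sections of logarithmic forms on the affine open subset containing $U$. Concretely, writing $n=\dim X$, Serre duality gives $H^q(X,\Omega^p_X(\log\Sigma))^\vee\simeq H^{n-q}(X,\Omega^p_X(\log\Sigma)^\vee\otimes\cO_X(K_X))$, and the perfect pairing identifies $\Omega^p_X(\log\Sigma)^\vee\otimes\cO_X(K_X)$ with $\Omega^{n-p}_X(\log\Sigma)\otimes\cO_X(-\Sigma)=\cI_\Sigma\otimes\Omega^{n-p}_X(\log\Sigma)$. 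So the condition $p+q>n$ translates into showing $H^{n-q}(X,\cI_\Sigma\otimes\Omega^{n-p}_X(\log\Sigma))=0$ for $(n-q)+(n-p)<n$, i.e. for cohomological degree strictly less than the form degree; after renaming this is the vanishing $H^j(X,\cI_\Sigma\otimes\Omega^i_X(\log\Sigma))=0$ for $j<i$.

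The next step is to prove this dual vanishing using the affineness hypothesis. First I would reduce from ``$U$ contained in an affine open'' to ``$U$ affine'': if $U\subseteq W$ with $W$ affine, then $W=X\setminus\Sigma'$ for some effective divisor $\Sigma'$ supported on a normal crossings divisor containing $\Sigma$ (after a further log resolution, which does not change the cohomology groups in question by the usual descent via Lemma~\ref{keyct} — one replaces $\Sigma$ by a larger reduced normal crossings divisor whose complement is affine and compares). More directly, since $X\setminus W$ is the support of an effective Cartier divisor and $\Sigma\subseteq X\setminus W$, one has an exact sequence relating $\cI_\Sigma\otimes\Omega^i_X(\log\Sigma)$ to sheaves whose sections are controlled on the affine $W$; I would use Lemma~\ref{rs}, the short exact sequence $0\to\cI_\Sigma\otimes\Omega^i_X(\log\Sigma)\to\Omega^i_X\to\tilde\Omega^i_\Sigma\to0$, to break the computation into $H^j(X,\Omega^i_X)$ and $H^{j}(\Sigma,\tilde\Omega^i_\Sigma)$ plus connecting maps, and then bound these by the Hodge-theoretic degeneration Theorems~\ref{E1D} and~\ref{NCHR} together with the fact that a smooth (resp. normal crossing) affine variety of dimension $d$ has no cohomology of logarithmic (resp. Du Bois) forms above degree $d$ — this is the Andreotti–Frankel/Artin-type bound that the affineness of $W$ feeds in.

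Alternatively, and perhaps more cleanly, I would run the argument as in Esnault–Viehweg: the logarithmic de Rham complex $\Omega^\bullet_X(\log\Sigma)$ computes $H^\bullet(U,\C)$ (or the algebraic de Rham cohomology of $U$) by Theorem~\ref{AH}, and when $U$ is affine of dimension $n$ this cohomology vanishes above degree $n$; feeding the $E_1$-degeneration of Theorem~\ref{E1D} then forces the individual Hodge pieces $H^q(X,\Omega^p_X(\log\Sigma))$ with $p+q>n$ to vanish, since they are direct summands (subquotients that are actually summands by degeneration) of $\bH^{p+q}(X,\Omega^\bullet_X(\log\Sigma))=H^{p+q}_{DR}(U)=0$. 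The reduction from ``contained in an affine'' to ``affine'' is handled by noting that if $W\supseteq U$ is affine then $H^{p+q}_{DR}(U)$ maps from $H^{p+q}_{DR}(W)=0$... — actually the clean statement is that $\bH^{p+q}(X,\Omega^\bullet_X(\log\Sigma))\cong H^{p+q}(U,\C)$ and one bounds $H^{p+q}(U,\C)$ by embedding $U$ in the affine $W$ and using that $U$ has the homotopy type of a CW complex of dimension $\le n$ only when $U$ itself is affine; so one genuinely needs the reduction via replacing $\Sigma$ by a larger normal crossings divisor with affine complement, which is possible exactly because $X\setminus W$ is a nonzero effective divisor and one may log-resolve $\Sigma\cup\Supp(X\setminus W)$. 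The last parenthetical sentence of the theorem, $H^q(X,\cO_X(K_X+\Sigma))=0$ for $q>0$, is then the case $p=n$.

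\textbf{Main obstacle.} The real content — and where I expect to spend the effort — is the reduction step from ``$U$ is contained in an affine open subset'' to the effective vanishing, i.e. producing the enlarged normal crossings boundary with \emph{affine} complement and checking that enlarging the boundary does not destroy the vanishing one wants (enlarging $\Sigma$ changes every $H^q(X,\Omega^p_X(\log\Sigma))$, so one must argue in the right direction). Once one is in the affine situation, the vanishing is the classical Esnault–Viehweg computation and follows formally from Theorems~\ref{AH} and~\ref{E1D}.
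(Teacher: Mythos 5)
There is a genuine gap, and it is exactly at the point you yourself flag as the ``main obstacle'': the passage from ``$U$ is contained in an affine open $U'$'' to the vanishing. Your first route (Serre duality) only restates the theorem in the dual form $H^q(X,\Omega^p_X(\log\Sigma)\otimes\cO_X(-\Sigma))=0$ for $p+q<\dim X$ (note the correct condition is on the total degree, not ``cohomological degree less than form degree'' as you wrote), and the proposed analysis via Lemma~\ref{rs} never makes precise how the affineness of $U'$ enters, so it does not close. Your second route needs $H^r_{DR}(U)=0$ for $r>\dim X$, which, as you correctly observe, is not available unless $U$ itself is affine; and the fallback you propose --- enlarge $\Sigma$ to a normal crossings divisor with affine complement, possibly after a log resolution, and descend --- is not justified: enlarging the boundary changes every sheaf $\Omega^p_X(\log\Sigma)$ with no comparison map in the useful direction, Lemma~\ref{keyct} is about cyclic covers and says nothing about descending vanishing through a birational modification, and the paper's own example in Section 1-B (and the restriction to $H^1$ in Section 5) shows that such descent for higher cohomology is precisely what one cannot take for granted. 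So the proposal as written proves the theorem only when $U$ is affine, which is the already known Esnault--Viehweg case.

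The paper's proof avoids any reduction by a short factorization argument that your plan misses. For $r>n=\dim X$, consider the restriction map $\bH^r(X,\Omega^\bullet_X(\log\Sigma))\to\bH^r(U,\Omega^\bullet_U)$; by Theorem~\ref{AH} (together with the affineness of the inclusion $U\subset X$, Lemma~\ref{ob}) this map is an isomorphism. On the other hand it factors through $\bH^r\bigl(U',\Omega^\bullet_X(\log\Sigma)|_{U'}\bigr)$ for the affine open $U'\supseteq U$, and this group vanishes for the cheap reason that on an affine variety the coherent components of the complex have no higher cohomology, so the hypercohomology is computed by $\Gamma(U',\Omega^\bullet_X(\log\Sigma))$, a complex concentrated in degrees $\le n$. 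Hence an isomorphism factors through zero, giving $\bH^r(X,\Omega^\bullet_X(\log\Sigma))=0$ for $r>n$, and the $E_1$-degeneration of Theorem~\ref{E1D} then splits this into the vanishing of each $H^q(X,\Omega^p_X(\log\Sigma))$ with $p+q=r$. The key point is that one never needs the de Rham cohomology of $U$ or of $U'$, nor any modification of $(X,\Sigma)$: only the hypercohomology of the \emph{restricted} logarithmic complex on $U'$, where affineness does all the work.
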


\begin{proof}
Consider the logarithmic de Rham complex $\Omega^\bullet_X(\log \Sigma)$. 
Let $U'$ be an affine open subset of $X$ containing $U$. The inclusions 
$U\subseteq U'\subset X$ induce a commutative diagram
\[ 
\xymatrix{
\bH^r(X,\Omega^\bullet_X(\log \Sigma)) \ar[rr]\ar[dr] & & 
\bH^r(U,\Omega^\bullet_U) \\
& \bH^r(U',\Omega^\bullet_X(\log \Sigma)|_{U'}) \ar[ur] &
} \]
Since $U'$ is affine, $H^q(U',\Omega^p_X(\log \Sigma)|_{U'})=0$
for $q>0$. Therefore 
$\bH^r(U',\Omega^\bullet_X(\log \Sigma)|_{U'})$ is the $r$-th
homology of the differential complex 
$\Gamma(U',\Omega^\bullet_X(\log \Sigma))$. 
Since $\Omega^p_X(\log \Sigma)=0$ for $p>\dim X$, we obtain
$$
\bH^r(U',\Omega^\bullet_X(\log \Sigma)|_{U'})=0 \text{ for }
r>\dim X.
$$
Let $r>\dim X$. It follows that the horizontal map is zero.
But it is an isomorphism by Theorem~\ref{AH}. Therefore 
$$
\bH^r(X,\Omega^\bullet_X(\log \Sigma))=0.
$$
By Theorem~\ref{E1D}, we have a non-canonical isomorphism
$$
\bH^r(X,\Omega^\bullet_X(\log \Sigma))\simeq \oplus_{p+q=r}
H^q(X,\Omega^p_X(\log \Sigma))
$$
Therefore $H^q(X,\Omega^p_X(\log \Sigma))=0$ for all $p+q=r$.
\end{proof}

Let $T$ be a $\Q$-divisor on $X$ such that $T\sim_\Q 0$ and 
$\Supp\{T\}\subseteq \Sigma$. In particular, $T|_U$ has 
integer coefficients.

\begin{thm}\label{ovt}
$
H^q(X,\Omega^p_X(\log \Sigma)\otimes\cO_X(\lfloor T\rfloor))=0  
\text{ for }p+q>\dim X.
$
In particular, $H^q(X,\cO_X(K_X+\Sigma+\lfloor T\rfloor))=0$ for $q>0$.
\end{thm}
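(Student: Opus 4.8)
The plan is to deduce Theorem~\ref{ovt} from Theorem~\ref{ov} by exactly the same cyclic covering argument that derives Corollary~\ref{oit} from Theorem~\ref{oi}. Concretely, I would work with the notations of paragraph 1.H: starting from $T\sim_\Q 0$ with $\Supp\{T\}\subseteq \Sigma$, let $r\ge 1$ be minimal with $rT\sim 0$, choose $\varphi$ with $(\varphi)=rT$, form the cyclic cover $\tau'\colon X'\to X$, and then the log resolution $\mu\colon Y\to X'$, producing $\tau=\tau'\circ\mu\colon Y\to X$ with $\Sigma_Y=\tau^{-1}(\Sigma)$ a normal crossings divisor and $\tau$ an isomorphism over $U$.

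\emph{Step 1 (apply Theorem~\ref{ov} upstairs).} The pair $(Y,\Sigma_Y)$ is log smooth with $Y$ proper, and $Y\setminus\Sigma_Y\simeq X'\setminus{\tau'}^{-1}\Sigma$ maps finitely onto $U$; since $U$ is contained in an affine open subset $U'$ of $X$, its preimage $\tau^{-1}(U')$ is contained in an affine open subset of $Y$ (preimage of an affine under a finite, hence affine, morphism is affine), so $Y\setminus\Sigma_Y$ is contained in an affine open of $Y$. Hence Theorem~\ref{ov} gives $H^q(Y,\Omega^p_Y(\log\Sigma_Y))=0$ for $p+q>\dim Y=\dim X$.

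\emph{Step 2 (push down via the Leray spectral sequence).} By Lemma~\ref{keyct}, $R^q\tau_*\Omega^p_Y(\log\Sigma_Y)=0$ for $q>0$, so the Leray spectral sequence for $\tau$ degenerates and $H^q(Y,\Omega^p_Y(\log\Sigma_Y))\cong H^q(X,\tau_*\Omega^p_Y(\log\Sigma_Y))$. Again by Lemma~\ref{keyct}, $\tau_*\Omega^p_Y(\log\Sigma_Y)\simeq\oplus_{i=0}^{r-1}\Omega^p_X(\log\Sigma)\otimes\cO_X(\lfloor iT\rfloor)$, so for each fixed $p+q>\dim X$ we get
$$
\bigoplus_{i=0}^{r-1}H^q\bigl(X,\Omega^p_X(\log\Sigma)\otimes\cO_X(\lfloor iT\rfloor)\bigr)=0.
$$
Taking the summand $i=1$ yields $H^q(X,\Omega^p_X(\log\Sigma)\otimes\cO_X(\lfloor T\rfloor))=0$ for $p+q>\dim X$. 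The ``in particular'' follows by setting $p=\dim X$, since $\Omega^n_X(\log\Sigma)=\cO_X(K_X+\Sigma)$ and hence $\Omega^n_X(\log\Sigma)\otimes\cO_X(\lfloor T\rfloor)=\cO_X(K_X+\Sigma+\lfloor T\rfloor)$; then $p+q>n$ for $q>0$.

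There is essentially no hard part here: the argument is a verbatim adaptation of the proof of Corollary~\ref{oit}, with Theorem~\ref{oi} replaced by Theorem~\ref{ov}. The only point requiring a word of care is the observation in Step~1 that the affineness hypothesis ascends along the finite morphism $\tau$ — i.e.\ that $Y\setminus\Sigma_Y$ still lies in an affine open subset of $Y$ — which is immediate since $\tau$ is finite and the preimage of an affine open under an affine morphism is affine. Everything else is formal once Lemma~\ref{keyct} is invoked.
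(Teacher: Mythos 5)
Your Step 2 is fine and agrees with the paper's final step, but Step 1 contains a genuine gap. The morphism $\tau=\tau'\circ\mu$ is \emph{not} finite: $\tau'$ is finite, but $\mu\colon Y\to X'$ is a resolution of the (quotient) singularities of $X'$, which lie over $\Sigma$, and it is in general a nontrivial birational contraction. Since the affine open $U'\supseteq U$ typically meets $\Sigma$, the affine set ${\tau'}^{-1}(U')$ meets $\Sing X'$, and $V'=\tau^{-1}(U')=\mu^{-1}({\tau'}^{-1}(U'))$ contains complete $\mu$-exceptional fibres, hence is not affine; nor is there any evident affine open of $Y$ containing $Y\setminus\Sigma_Y$. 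So the justification ``preimage of an affine under the finite morphism $\tau$ is affine'' fails, and you cannot simply invoke Theorem~\ref{ov} for the pair $(Y,\Sigma_Y)$: its hypothesis that $Y\setminus\Sigma_Y$ lies in an affine open of $Y$ is not verified. (Applying Theorem~\ref{ov} on $X'$ instead is also not possible, since $X'$ is singular.)

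The paper circumvents exactly this point by not transporting the affineness upstairs but only its cohomological consequence. The only way affineness of $U'$ enters the proof of Theorem~\ref{ov} is through the vanishing $H^q(U',\Omega^p_X(\log\Sigma)|_{U'})=0$ for $q>0$; correspondingly, one shows directly that $H^q(V',\Omega^p_Y(\log\Sigma_Y)|_{V'})=0$ for $q>0$, using Lemma~\ref{keyct} to degenerate the Leray spectral sequence of $\tau|_{V'}\colon V'\to U'$ and then the affineness of $U'$ downstairs. With this vanishing, the argument of Theorem~\ref{ov} is rerun with $V'$ in place of the affine open: the naive spectral sequence gives $\bH^r(V',\Omega^\bullet_Y(\log\Sigma_Y)|_{V'})=0$ for $r>\dim Y$, the Atiyah--Hodge isomorphism (Theorem~\ref{AH}) for $(Y,\Sigma_Y)$ factors through $V'$, hence $\bH^r(Y,\Omega^\bullet_Y(\log\Sigma_Y))=0$ for $r>\dim Y$, and Theorem~\ref{E1D} yields $H^q(Y,\Omega^p_Y(\log\Sigma_Y))=0$ for $p+q>\dim Y$. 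From there your Step 2 (the eigenspace/pushforward decomposition of Lemma~\ref{keyct}, taking the summand $i=1$) concludes as you wrote. So the overall cyclic-covering strategy is the right one, but the passage ``affineness ascends along $\tau$'' must be replaced by this Leray argument.
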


\begin{proof} We use the notations of paragraph 1.H.
Let $X\setminus \Sigma\subseteq U'$, with $U'$ an 
affine open subset of $X$. Let $V'=\tau^{-1}(U')$.
By Lemma~\ref{keyct}, the Leray spectral sequence associated to
$\tau|_{V'}\colon V'\to U'$ and $\Omega^p_Y(\log \Sigma_Y)|_{V'}$
degenerates into isomorphisms
$$
H^q(U',(\tau|_{V'})_*\Omega^p_Y(\log \Sigma_Y)|_{V'})\isoto 
H^q(V',\Omega^p_Y(\log \Sigma_Y)|_{V'}).
$$
Since $U'$ is affine, the left hand side is zero for $q>0$.
Therefore 
$$
H^q(V',\Omega^p_Y(\log \Sigma_Y)|_{V'})=0 \text{ for }q>0.
$$
In particular, the spectral sequence 
$$
E_1^{pq}=H^q(V',\Omega^p_Y(\log \Sigma_Y)|_{V'})\Longrightarrow
\bH^q(V',\Omega^\bullet_Y(\log \Sigma_Y)|_{V'})
$$
degenerates into isomorphisms
$$
h^r(\Gamma(V',\Omega^\bullet_Y(\log \Sigma_Y)))\simeq 
\bH^r(V',\Omega^\bullet_Y(\log \Sigma_Y)|_{V'}),
$$
where the first term is the $r$-th homology group of the 
differential complex $\Gamma(V',\Omega^\bullet_Y(\log \Sigma_Y))$. 
Since $\Omega^p_Y(\log \Sigma_Y)=0$ for $p>\dim Y$, we obtain 
$$
\bH^r(V',\Omega^\bullet_Y(\log \Sigma_Y)|_{V'})=0\text{ for }r>\dim Y.
$$
Let $V=\tau^{-1}(U)=Y\setminus \Sigma_Y$. The restriction map 
$$
\bH^r(Y,\Omega^\bullet_Y(\log \Sigma_Y))\to
\bH^r(V,\Omega^\bullet_Y(\log \Sigma_Y)|_V)
$$
is an isomorphism by Theorem~\ref{AH}. It factors through 
$\bH^r(V',\Omega^\bullet_Y(\log \Sigma_Y)|_{V'})$, hence it is zero
for $r>\dim Y$. Therefore 
$$
\bH^r(Y,\Omega^\bullet_Y(\log \Sigma_Y))=0 \text{ for }r>\dim Y.
$$
By Theorem~\ref{E1D}, 
$
\bH^r(Y,\Omega^\bullet_Y(\log \Sigma_Y))\simeq \oplus_{p+q=r}H^q(Y,
\Omega^p_Y(\log \Sigma_Y)).
$
Therefore 
$$
H^q(Y,\Omega^p_Y(\log \Sigma_Y))=0\text{ for }p+q>\dim Y.
$$
The cyclic group of order $r$ acts on $H^q(Y,\Omega^p_Y(\log \Sigma_Y))$,
with eigenspace decomposition
$$
\oplus_{i=0}^{r-1}H^q(X,\Omega^p_X(\log \Sigma)\otimes\cO_X(\lfloor iT\rfloor)).
$$
Therefore $H^q(X,\Omega^p_X(\log \Sigma)\otimes\cO_X(\lfloor T\rfloor))=0$.
\end{proof}


\subsection{Applications}


\begin{cor}\label{ovdual}
$
H^q(X,\Omega^p_X(\log \Sigma)\otimes\cO_X(-\Sigma-\lfloor T\rfloor))=0  
\text{ for }p+q<\dim X.
$
In particular, $H^q(X,\cO_X(-\Sigma-\lfloor T\rfloor))=0$ for all $q<\dim X$.
\end{cor}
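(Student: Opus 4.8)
The plan is to deduce this from Theorem~\ref{ovt} by Serre duality on the proper nonsingular variety $X$. Write $n=\dim X$. Since $X$ is smooth and proper of dimension $n$ over $k$, Serre duality provides, for every locally free coherent $\cO_X$-module $\cE$, a perfect pairing $H^q(X,\cE)\times H^{n-q}(X,\cE^\vee\otimes\cO_X(K_X))\to k$. So the first task is to compute $\cE^\vee\otimes\cO_X(K_X)$ for $\cE=\Omega^p_X(\log\Sigma)\otimes\cO_X(-\Sigma-\lfloor T\rfloor)$.

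For this I would invoke the perfect pairing $\Omega^p_X(\log\Sigma)\otimes_{\cO_X}\Omega^{n-p}_X(\log\Sigma)\to\Omega^n_X(\log\Sigma)=\cO_X(K_X+\Sigma)$ established in paragraph~1.G, which gives a canonical isomorphism $\Omega^p_X(\log\Sigma)^\vee\cong\Omega^{n-p}_X(\log\Sigma)\otimes\cO_X(-K_X-\Sigma)$. Tensoring with $\cO_X(\Sigma+\lfloor T\rfloor+K_X)$ and cancelling, one obtains
\[
\cE^\vee\otimes\cO_X(K_X)\cong\Omega^{n-p}_X(\log\Sigma)\otimes\cO_X(-K_X-\Sigma)\otimes\cO_X(\Sigma+\lfloor T\rfloor+K_X)=\Omega^{n-p}_X(\log\Sigma)\otimes\cO_X(\lfloor T\rfloor).
\]
Hence $H^q(X,\cE)$ is dual to $H^{n-q}(X,\Omega^{n-p}_X(\log\Sigma)\otimes\cO_X(\lfloor T\rfloor))$.

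It then remains to apply Theorem~\ref{ovt} with $p$ replaced by $n-p$ and the same divisor $T$ (whose hypotheses $T\sim_\Q 0$ and $\Supp\{T\}\subseteq\Sigma$ are exactly those assumed here): the group $H^{n-q}(X,\Omega^{n-p}_X(\log\Sigma)\otimes\cO_X(\lfloor T\rfloor))$ vanishes whenever $(n-p)+(n-q)>n$, that is, $p+q<n=\dim X$. Therefore its dual, $H^q(X,\Omega^p_X(\log\Sigma)\otimes\cO_X(-\Sigma-\lfloor T\rfloor))$, vanishes in the same range, which is the assertion; taking $p=0$ and $\Omega^0_X(\log\Sigma)=\cO_X$ yields the stated particular case. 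There is no real obstacle beyond keeping track of the twists; the one point deserving care is that $\lfloor T\rfloor$ denotes the same integral divisor in both statements, so that Theorem~\ref{ovt} applies verbatim rather than to some modified boundary.
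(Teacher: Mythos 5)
Your argument is correct and is essentially the paper's own proof: the paper also deduces Corollary~\ref{ovdual} from Theorem~\ref{ovt} by Serre duality together with the isomorphism $(\Omega_X^p(\log \Sigma))^\vee\simeq \Omega_X^{\dim X-p}(\log\Sigma)\otimes \cO_X(-K_X-\Sigma)$ coming from the perfect wedge pairing. Your bookkeeping of the twists and of the index shift $(p,q)\mapsto(n-p,n-q)$ matches the intended argument exactly.
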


\begin{proof} This is the dual form of Theorem~\ref{ovt}, using Serre duality
and the isomorphism 
$(\Omega_X^p(\log \Sigma))^\vee\simeq \Omega_X^{\dim X-p}(\log\Sigma)\otimes 
\cO_X(-K_X-\Sigma)$.
\end{proof}

For $T=0$, we obtain
$
H^q(X,\cI_\Sigma\otimes \Omega^p_X(\log \Sigma))=0  \text{ for all }p+q<\dim X.
$
In particular, $H^q(X,\cI_\Sigma)=0$ for all $q<\dim X$.

\begin{cor} The homomorphism
$
H^q(X,\Omega^p_X\otimes\cO_X(-\lfloor T\rfloor))\to 
H^q(\Sigma,\tilde{\Omega}^p_\Sigma\otimes \cO_\Sigma(-\lfloor T\rfloor))
$
is bijective for $p+q<\dim \Sigma$ and injective for $p+q=\dim \Sigma$.
\end{cor}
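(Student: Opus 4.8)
The plan is to compare the two cohomology groups via the restriction sequence of Lemma~\ref{rs}. I would start by tensoring that short exact sequence with the invertible sheaf $\cO_X(-\lfloor T\rfloor)$. Since this sheaf is locally free, the sequence stays exact, and using $\cI_\Sigma=\cO_X(-\Sigma)$ together with the fact that $\tilde{\Omega}^p_\Sigma$ is an $\cO_\Sigma$-module, it becomes
$$
0\to \Omega^p_X(\log \Sigma)\otimes\cO_X(-\Sigma-\lfloor T\rfloor)\to \Omega^p_X\otimes\cO_X(-\lfloor T\rfloor)\to \tilde{\Omega}^p_\Sigma\otimes\cO_\Sigma(-\lfloor T\rfloor)\to 0.
$$
The homomorphism in the statement is exactly the one induced on $H^q$ by the surjection in this sequence.

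Next I would pass to the long exact cohomology sequence and observe that this homomorphism is flanked by $H^q(X,\cK^p)$ and $H^{q+1}(X,\cK^p)$, where $\cK^p:=\Omega^p_X(\log \Sigma)\otimes\cO_X(-\Sigma-\lfloor T\rfloor)$ is the kernel sheaf. So the whole problem reduces to a vanishing statement for $H^q$ and $H^{q+1}$ of $\cK^p$ in the relevant range.

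That vanishing is precisely Corollary~\ref{ovdual}, which gives $H^i(X,\cK^p)=0$ whenever $p+i<\dim X$. Hence, if $p+q<\dim\Sigma=\dim X-1$, then both $p+q<\dim X$ and $p+(q+1)<\dim X$, so $H^q(X,\cK^p)=H^{q+1}(X,\cK^p)=0$ and the map is bijective; if $p+q=\dim\Sigma=\dim X-1<\dim X$, then $H^q(X,\cK^p)=0$ and the map is injective. I do not anticipate a genuine obstacle here: everything is a formal consequence of Lemma~\ref{rs} and Corollary~\ref{ovdual}, and the only point requiring a little care is the identification of the three terms in the tensored sequence, which follows at once from $\cI_\Sigma=\cO_X(-\Sigma)$ and the fact that $\tilde{\Omega}^p_\Sigma$ is supported on $\Sigma$.
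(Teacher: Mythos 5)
Your proposal is correct and is essentially the paper's own argument: tensor the short exact sequence of Lemma~\ref{rs} with $\cO_X(-\lfloor T\rfloor)$, take the long exact cohomology sequence, and apply the vanishing of Corollary~\ref{ovdual} to the flanking terms $H^q$ and $H^{q+1}$ of $\Omega^p_X(\log\Sigma)\otimes\cO_X(-\Sigma-\lfloor T\rfloor)$ in the stated range. The range bookkeeping ($p+q<\dim\Sigma$ gives both vanishings, $p+q=\dim\Sigma$ gives only the one needed for injectivity) matches the paper exactly.
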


\begin{proof} Denote 
$K^{pq}=H^q(X,\Omega_X^p(\log \Sigma)\otimes\cO_X(-\Sigma-\lfloor T\rfloor))$.
The short exact sequence of Lemma~\ref{rs} induces 
a long exact sequence in cohomology
$$
K^{pq}\to
H^q(X,\Omega_X^p\otimes\cO_X(-\lfloor T\rfloor))\stackrel{\alpha^{qp}}{\to} 
H^q(\Sigma,\tilde{\Omega}^p_\Sigma\otimes \cO_\Sigma(-\lfloor T\rfloor))\to
K^{p,q+1}
$$
By Corollary~\ref{ovdual}, $\alpha^{qp}$ is bijective for 
$q+1<\dim X-p$, and injective for $q+1=\dim X-p$. 
\end{proof}

\begin{cor}[Weak Lefschetz] The restriction homomorphism 
$
H^r_{DR}(X/k)\to H^r_{DR}(\Sigma/k)
$
is bijective for $r<\dim \Sigma$ and injective for $r=\dim \Sigma$.
\end{cor}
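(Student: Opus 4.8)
The plan is to reduce the statement to the preceding corollary (taken with $T=0$) by comparing the Hodge--de Rham spectral sequences of $X$ and of $\Sigma$. First I would make the restriction map explicit. Writing $i\colon \Sigma\hookrightarrow X$ for the closed embedding, restriction of differential forms defines a morphism of complexes of $\cO_X$-modules $\Omega^\bullet_X\to i_*\tilde{\Omega}^\bullet_\Sigma$ whose component in degree $p$ is the surjection $\Omega^p_X\to \tilde{\Omega}^p_\Sigma$ appearing in Lemma~\ref{rs}. Since $i$ is a closed embedding, $\bH^r(X,i_*\tilde{\Omega}^\bullet_\Sigma)=\bH^r(\Sigma,\tilde{\Omega}^\bullet_\Sigma)=H^r_{DR}(\Sigma/k)$, so this morphism of complexes induces on hypercohomology precisely the restriction homomorphism $H^r_{DR}(X/k)\to H^r_{DR}(\Sigma/k)$ of the statement.

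Next I would equip both complexes with the naive filtration $F$ and observe that the morphism above is compatible with these filtrations. The two associated spectral sequences $E_1^{pq}=H^q(X,\Omega^p_X)\Longrightarrow H^{p+q}_{DR}(X/k)$ and $E_1^{pq}=H^q(\Sigma,\tilde{\Omega}^p_\Sigma)\Longrightarrow H^{p+q}_{DR}(\Sigma/k)$ both degenerate at $E_1$: the first by Theorem~\ref{E1D} taken with empty boundary divisor (so that $\Omega^\bullet_X(\log\Sigma)$ is just $\Omega^\bullet_X$), the second by Theorem~\ref{NCHR}, the properness hypothesis being met in both cases. Consequently the filtration induced on $H^r_{DR}(X/k)$ has $\operatorname{gr}^p$-piece $H^{r-p}(X,\Omega^p_X)$, and similarly for $\Sigma$, and by functoriality of the spectral sequence of a filtered complex the restriction homomorphism induces on the $\operatorname{gr}^p$-pieces exactly the map $H^{r-p}(X,\Omega^p_X)\to H^{r-p}(\Sigma,\tilde{\Omega}^p_\Sigma)$ treated in the preceding corollary (with $T=0$). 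By that corollary this map is bijective when $r<\dim\Sigma$ and injective when $r=\dim\Sigma$, for every $p$.

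Finally I would invoke the elementary fact, proved by induction on the length of the filtration, that a filtered homomorphism of finitely filtered groups which is bijective (respectively injective) on every graded piece is itself bijective (respectively injective). Applying this to $H^r_{DR}(X/k)\to H^r_{DR}(\Sigma/k)$: for $r<\dim\Sigma$ every graded piece is bijective, hence the map is bijective; for $r=\dim\Sigma$ every graded piece is injective, hence the map is injective. This gives the claim.

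The only genuinely nontrivial ingredient is the correct identification of the graded pieces, which is exactly where the two degeneration theorems are used; once $\operatorname{gr}^p H^r_{DR}=E_1^{p,r-p}$ on both sides, the remaining argument is pure bookkeeping with the filtered morphism. I expect no real obstacle beyond checking that the restriction of forms is indeed the degree-$p$ map of Lemma~\ref{rs} filtration by filtration and that both spectral sequences genuinely degenerate in the present generality.
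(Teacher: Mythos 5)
Your proposal is correct and follows essentially the same route as the paper: set $T=0$ in the preceding corollary, compare the Hodge to de Rham spectral sequences of $X$ and $\Sigma$ (both degenerating at $E_1$, for $\Sigma$ by Theorem~\ref{NCHR}), and conclude via the induced maps on graded pieces of the finite filtrations. The only cosmetic difference is that you obtain the degeneration for $X$ from Theorem~\ref{E1D} applied with empty boundary, whereas the paper cites Deligne's theorem directly; this is the same content, and your extra bookkeeping (identifying the restriction with the filtered morphism of complexes whose degree-$p$ piece is the surjection of Lemma~\ref{rs}, and the graded-injectivity argument) is exactly what the paper's phrase ``compatible with the maps above'' leaves implicit.
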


\begin{proof} Set $T=0$. The homomorphism
$
H^q(X,\Omega^p_X) \to H^q(\Sigma,\tilde{\Omega}^p_\Sigma)
$
is bijective for $p+q<\dim \Sigma$ and injective for $p+q=\dim \Sigma$.
The Hodge to de Rham spectral sequence degenerates at $E_1$, for
$X/k$ by~\cite[Theorem 5.5]{Del69A} and for $\Sigma/k$ by Theorem~\ref{NCHR}, 
and is compatible with the maps above.
\end{proof}

\begin{cor}
Suppose $\Supp\{T\}=\Sigma$. Then 
$H^q(X,\Omega^p_X(\log \Sigma)\otimes\cO_X(\lfloor T\rfloor))=0$
for all $p+q\ne \dim X$.
\end{cor}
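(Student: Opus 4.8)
The plan is to extract the vanishing in the two ranges $p+q>\dim X$ and $p+q<\dim X$ from the two results already established, Theorem~\ref{ovt} and Corollary~\ref{ovdual}. The range $p+q>\dim X$ is \emph{literally} the content of Theorem~\ref{ovt}, so nothing new is required there. For the range $p+q<\dim X$ I would not re-run the spectral sequence computation; instead I would apply Corollary~\ref{ovdual} --- which is already the Serre-dual form of Theorem~\ref{ovt} --- to the auxiliary $\Q$-divisor $-T$ in place of $T$.

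First I would check that $-T$ is admissible for Corollary~\ref{ovdual}: $-T\sim_\Q 0$ since $T\sim_\Q 0$, and a coefficient of $-T$ is an integer exactly when the corresponding coefficient of $T$ is, so $\Supp\{-T\}=\Supp\{T\}=\Sigma$; in particular $\Supp\{-T\}\subseteq\Sigma$. Hence Corollary~\ref{ovdual} applies to $-T$ and gives
$$
H^q(X,\Omega^p_X(\log\Sigma)\otimes\cO_X(-\Sigma-\lfloor -T\rfloor))=0\qquad (p+q<\dim X).
$$

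The one computation to run is the identification of the twist. Since $\Supp\{T\}=\Sigma$, every component of $\Sigma$ has a non-integral coefficient in $T$, so $\lceil T\rceil=\lfloor T\rfloor+\Sigma$; combined with the elementary identity $\lfloor -T\rfloor=-\lceil T\rceil$ this yields
$$
-\Sigma-\lfloor -T\rfloor=-\Sigma+\lceil T\rceil=\lfloor T\rfloor .
$$
Thus the displayed vanishing becomes $H^q(X,\Omega^p_X(\log\Sigma)\otimes\cO_X(\lfloor T\rfloor))=0$ for $p+q<\dim X$, and together with Theorem~\ref{ovt} this is exactly the assertion.

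I do not expect a genuine obstacle: the whole content is that the hypothesis $\Supp\{T\}=\Sigma$ (equality, not merely inclusion) is precisely what makes the bookkeeping identity $-\Sigma-\lfloor -T\rfloor=\lfloor T\rfloor$ hold, so that the two half-ranges glue to give vanishing for all $p+q\ne\dim X$. If some component of $\Sigma$ carried an integral coefficient in $T$, a correction term supported on that component would enter, the $p+q<\dim X$ range produced by duality would be twisted by $\lfloor T\rfloor$ minus that component rather than by $\lfloor T\rfloor$, and the two ranges would fail to match up. The only thing to be careful about is keeping round-downs and round-ups straight under negation.
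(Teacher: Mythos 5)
Your proposal is correct and is essentially the paper's own argument: the range $p+q>\dim X$ is Theorem~\ref{ovt}, and for $p+q<\dim X$ the paper likewise applies the dual form (Corollary~\ref{ovdual}) to $-T$, using precisely the identity $-\Sigma-\lfloor -T\rfloor=\lfloor T\rfloor$, which as you note is where the hypothesis $\Supp\{T\}=\Sigma$ enters. Your verification that $-T$ satisfies the hypotheses and the bookkeeping with $\lfloor -T\rfloor=-\lceil T\rceil$ and $\lceil T\rceil=\lfloor T\rfloor+\Sigma$ is exactly right.
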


\begin{proof}
For $p+q>\dim X$, this follows from above. For $p+q<\dim X$, apply
the dual form to $-T$, using $-\Sigma-\lfloor -T\rfloor=\lfloor T\rfloor$.
\end{proof}

\begin{cor}
Suppose $X\setminus \Supp\{T\}$ is contained in an affine open subset
of $X$. Then $H^q(X,\cO_X(\lfloor T\rfloor))=0$ for $q< \dim X$.
\end{cor}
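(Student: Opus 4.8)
The plan is to deduce this from Corollary~\ref{ovdual}, applied not to the pair $(X,\Sigma)$ and $T$, but to the smaller boundary $\Sigma'=\Supp\{T\}$ and to the divisor $-T$.

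First I would note that $\Sigma'=\Supp\{T\}$ is a reduced effective divisor whose irreducible components are among those of $\Sigma$; in particular it has normal crossing singularities, so $(X,\Sigma')$ is a log smooth pair with $X$ proper, and by hypothesis $X\setminus\Sigma'=X\setminus\Supp\{T\}$ is contained in an affine open subset of $X$. Thus $(X,\Sigma')$ satisfies the standing hypotheses of this section. I would also record the elementary identity $\lceil T\rceil=\lfloor T\rfloor+\Supp\{T\}$, whence $\lfloor -T\rfloor=-\lceil T\rceil=-\lfloor T\rfloor-\Sigma'$.

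Next I would apply Corollary~\ref{ovdual} to the pair $(X,\Sigma')$ and the $\Q$-divisor $-T$. The hypotheses are met: $-T\sim_\Q 0$ since $T\sim_\Q 0$, and $\Supp\{-T\}=\Supp\{T\}=\Sigma'$, because $\{-x\}\ne 0$ precisely when $\{x\}\ne 0$. Taking $p=0$ in the conclusion of Corollary~\ref{ovdual} then gives
$$
H^q(X,\cO_X(-\Sigma'-\lfloor -T\rfloor))=0\quad\text{for }q<\dim X .
$$
Finally, substituting $-\Sigma'-\lfloor -T\rfloor=-\Sigma'+\lfloor T\rfloor+\Sigma'=\lfloor T\rfloor$ from the identity above yields $H^q(X,\cO_X(\lfloor T\rfloor))=0$ for $q<\dim X$, as desired. (In fact the same argument with $p>0$ gives the stronger vanishing $H^q(X,\Omega^p_X(\log\Sigma')\otimes\cO_X(\lfloor T\rfloor))=0$ for $p+q<\dim X$, but only the $p=0$ case is needed.)

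I do not expect a serious obstacle: the proof is essentially a reformulation of the dual vanishing in Theorem~\ref{ovt}/Corollary~\ref{ovdual}, in the same spirit as the preceding corollary, which applies the dual form to $-T$. The only points requiring a moment's care are checking that shrinking the boundary from $\Sigma$ to $\Sigma'=\Supp\{T\}$ is harmless --- which it is, since $X\setminus\Sigma'$ still lies in an affine open and $(X,\Sigma')$ is still log smooth --- and the round-up/round-down bookkeeping that produces exactly $\lfloor T\rfloor$ on the nose.
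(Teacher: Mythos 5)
Your proof is correct and is essentially the argument the paper intends: replace the boundary by $\Sigma'=\Supp\{T\}$ (still log smooth, with $X\setminus\Sigma'$ in an affine open) and apply the dual vanishing of Corollary~\ref{ovdual} to $-T$, using $-\Sigma'-\lfloor -T\rfloor=\lfloor T\rfloor$, exactly as in the preceding corollary. The round-up/round-down bookkeeping and the verification that $\Supp\{-T\}=\Sigma'$ are handled correctly, so nothing is missing.
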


\begin{thm}[Akizuki-Nakano]
Let $X$ be projective non-singular variety. 
Let $L$ be an ample divisor. Then $H^q(X,\Omega^p_X(L))=0$ for $p+q>\dim X$.
Dually, $H^q(X,\Omega^p_X(-L))=0$ for $p+q<\dim X$.
\end{thm}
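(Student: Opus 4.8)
The plan is to deduce the Akizuki--Nakano vanishing from Theorem~\ref{ovt} by the standard device of replacing an ample divisor with a multiple of itself that carries a nice cyclic cover, i.e.\ by viewing $L$ as a piece of a torsion-looking $\Q$-divisor after subtracting a general section. Concretely, fix $m$ large enough that $|mL|$ is very ample and contains a divisor $\Sigma$ which, together with a given normal crossings configuration, is a normal crossings divisor on $X$ (here one uses Bertini over the characteristic-zero field $k$, invoking the Lefschetz principle as in Theorem~\ref{E1D} to pass from $\C$ to general $k$). Then $\Sigma \sim mL$, so $T := L - \tfrac{1}{m}\Sigma$ satisfies $T \sim_\Q 0$, its fractional part is supported on $\Sigma$ (with coefficients $\{-1/m\}$ on each component of $\Sigma$), and $\lfloor T\rfloor = L - \Sigma$. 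Moreover $X\setminus \Sigma$ is affine because $\Sigma$ is an ample (in fact very ample) divisor, so the hypothesis "$X\setminus \Sigma$ contained in an affine open subset" of Section~3 is satisfied — in fact with equality.

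First I would record that $\Omega^p_X(\log\Sigma)\otimes \cO_X(\lfloor T\rfloor) = \Omega^p_X(\log\Sigma)\otimes \cO_X(L-\Sigma)$, and that Theorem~\ref{ovt} applies to give $H^q\bigl(X,\Omega^p_X(\log\Sigma)\otimes \cO_X(L-\Sigma)\bigr)=0$ for $p+q>\dim X$. The remaining task is to descend from the logarithmic sheaf $\Omega^p_X(\log\Sigma)$ twisted by $L-\Sigma$ to the honest sheaf $\Omega^p_X$ twisted by $L$. For this I would use the residue/weight filtration on $\Omega^p_X(\log\Sigma)$: there is a short exact sequence
$$
0 \to \Omega^p_X \to \Omega^p_X(\log\Sigma) \to \tilde{\Omega}^{p-1}_\Sigma \to 0
$$
(more precisely the residue along the normalization $\bar\Sigma\to\Sigma$, analogous to Lemma~\ref{rs}), and I would tensor through by $\cO_X(L-\Sigma)$ and then run an induction. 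The key point is that after twisting the residue exact sequence by $\cO_X(L-\Sigma)$ one gets a term $H^\bullet(\Sigma, \tilde{\Omega}^{p-1}_\Sigma\otimes \cO_\Sigma(L-\Sigma))$, and $(L-\Sigma)|_\Sigma = (L|_\Sigma) - (\Sigma|_\Sigma)$ is of the same shape on $\Sigma$ as $-\Sigma|_\Sigma$ after absorbing the ample twist $L|_\Sigma$; more robustly, one simply wants $\Omega^p_X\otimes\cO_X(L)$ rather than $\Omega^p_X(\log\Sigma)\otimes\cO_X(L-\Sigma)$, and the two agree away from $\Sigma$, so the discrepancy is controlled by cohomology supported on $\Sigma$.

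The cleanest route, and the one I would actually carry out, avoids the residue bookkeeping: apply the cyclic covering trick of paragraph~1.H directly to $T = L - \tfrac1m\Sigma$, so that on the resolution $\tau\colon Y\to X$ of the degree-$m$ cover one has, by Lemma~\ref{keyct}, a direct sum decomposition
$$
\tau_*\Omega^p_Y(\log\Sigma_Y) \simeq \bigoplus_{i=0}^{m-1}\Omega^p_X(\log\Sigma)\otimes\cO_X(\lfloor iT\rfloor),
$$
and $\lfloor iT\rfloor = iL - \Sigma$ for $1\le i\le m-1$ (since $\{-i/m\}$-coefficients round the $\tfrac{i}{m}\Sigma$ term down to $-\Sigma$, for $1\le i \le m-1$); for $i=1$ this is $L-\Sigma$, not $L$ — so one more adjustment is needed. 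To land on $\Omega^p_X\otimes\cO_X(L)$ itself, I would instead take $\Sigma\in |mL|$ with $m$ large and set $T = (m-1)L - \tfrac1m\Sigma \cdot m \cdot$ — no: the honest fix is to observe $\Omega^p_X\otimes\cO_X(L) \subset \Omega^p_X(\log\Sigma)\otimes\cO_X(L)$ with quotient supported on $\Sigma$, write the associated long exact sequence, and bound the $\Sigma$-terms $H^q(\Sigma,\tilde\Omega^{p-1}_\Sigma\otimes\cO_\Sigma(L))$ by Serre vanishing (after enlarging $m$ so that $L|_\Sigma$ is sufficiently positive) together with Nakano vanishing on $\Sigma$ by induction on dimension. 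I expect the main obstacle to be exactly this last bookkeeping: matching the rounded-down log twist $L-\Sigma$ produced by Theorem~\ref{ovt} with the untwisted target $\Omega^p_X(L)$, i.e.\ checking that the correction cohomology groups on $\Sigma$ genuinely vanish in the range $p+q>\dim X$, which forces either an induction on $\dim X$ or a direct appeal to Serre vanishing after choosing $m\gg 0$. The dual statement $H^q(X,\Omega^p_X(-L))=0$ for $p+q<\dim X$ then follows formally by Serre duality, as in Corollary~\ref{ovdual}, using $(\Omega^p_X)^\vee\simeq \Omega^{\dim X-p}_X\otimes\cO_X(-K_X)$.
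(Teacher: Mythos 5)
Your setup coincides with the paper's (take a general member $\Sigma\in|mL|$, set $T=L-\tfrac1m\Sigma$, so $T\sim_\Q 0$, $\lfloor T\rfloor=L-\Sigma$, $X\setminus\Sigma$ affine, and Theorem~\ref{ovt} gives $H^q(X,\Omega^p_X(\log\Sigma)\otimes\cO_X(L-\Sigma))=0$ for $p+q>\dim X$), but the descent from this log-twisted vanishing to $\Omega^p_X(L)$ --- the step you yourself flag as ``the main obstacle'' --- is left open, and the fix you sketch does not work. Your final plan uses the inclusion $\Omega^p_X(L)\subset\Omega^p_X(\log\Sigma)\otimes\cO_X(L)$, i.e.\ the residue sequence at the twist $L$. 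The resulting long exact sequence needs two vanishings you do not have: first, $H^q(X,\Omega^p_X(\log\Sigma)\otimes\cO_X(L))=0$, which is not what Theorem~\ref{ovt} provides (it provides the twist $\lfloor T\rfloor=L-\Sigma$, and no $T\sim_\Q 0$ with fractional part supported on $\Sigma$ can have $\lfloor T\rfloor=L$, since then $\{T\}\sim_\Q -L$ would be an effective divisor $\Q$-linearly equivalent to an anti-ample one); second, the boundary terms $H^{q-1}(\Sigma,\Omega^{p-1}_\Sigma\otimes\cO_\Sigma(L))$, which induction on dimension only kills when $(p-1)+(q-1)>\dim\Sigma$, i.e.\ $p+q>\dim X+1$; at the borderline $p+q=\dim X+1$ (say $p=\dim X$, $q=1$) this group is $H^0(\Sigma,\omega_\Sigma\otimes\cO_\Sigma(L))$, which is typically nonzero. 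The appeal to Serre vanishing ``after enlarging $m$ so that $L|_\Sigma$ is sufficiently positive'' is also a non-starter: enlarging $m$ changes $\Sigma$ but does not increase the positivity of the fixed sheaf $\cO_\Sigma(L)$, and Serre vanishing says nothing about a fixed twist.

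The missing idea is to use the other exact sequence, Lemma~\ref{rs}, not the residue sequence. Take $Y\in|rL|$ general and \emph{smooth} (this matters: the induction needs $Y$ nonsingular with $L|_Y$ ample, and then $\tilde\Omega^p_Y=\Omega^p_Y$ and $\cI_Y\otimes\Omega^p_X(\log Y)=\Omega^p_X(\log Y)\otimes\cO_X(-Y)$); tensoring Lemma~\ref{rs} by $\cO_X(L)$ gives
$$
0\to\Omega^p_X(\log Y)\otimes\cO_X(L-Y)\to\Omega^p_X\otimes\cO_X(L)\to\Omega^p_Y\otimes\cO_Y(L)\to 0 .
$$
This places $\Omega^p_X(L)$ in the \emph{middle}: the left term carries exactly the rounded-down twist $L-Y=\lfloor T\rfloor$ killed by Theorem~\ref{ovt}, and the right term is killed by Akizuki--Nakano on $Y$ by induction with the \emph{same} indices $(p,q)$, so $p+q>\dim X>\dim Y$ suffices and there is no borderline loss. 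This is the paper's proof; your choice of exact sequences instead forces the unavailable twist $L$ on the log sheaf and a shift $(p,q)\mapsto(p-1,q-1)$ on the boundary, both of which break the argument. Your derivation of the dual statement by Serre duality is fine and agrees with the paper.
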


\begin{proof}
There exists $r\ge 1$ such that the general member 
$Y\in |rL|$ is non-singular. Set $T=L-\frac{1}{r}Y$ and 
$\Sigma=Y$. Then $T\sim_\Q 0$, $\Supp\{T\}=\Sigma$ and 
$X\setminus \Sigma$ is affine. We also have 
$\lfloor T\rfloor=L-Y$. By Theorem~\ref{ovt}, we obtain
$$
H^q(X,\Omega^p_X(\log Y)\otimes \cO_X(L-Y))=0\text{ for }
p+q>\dim Y.
$$
The short exact sequence of Lemma~\ref{rs}, tensored by $L$,
gives an exact sequence 
$$
H^q(X,\Omega^p_X(\log Y)(L-Y))\to H^q(X,\Omega^p_X(L))\to H^q(Y,\Omega^p_Y(L)).
$$
Let $p+q>\dim X$. The first term is zero from above, and the
third is zero by induction. Therefore 
$H^q(X,\Omega^p_X\otimes \cO_X(L))=0$.
\end{proof}

\begin{cor}[Kodaira]
Let $X$ be projective non-singular variety. 
Let $L$ be an ample divisor on $X$. Then $H^q(X,\cO_X(K_X+L))=0$ for $q>0$.
\end{cor}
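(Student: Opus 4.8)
The plan is to read this off directly from the Akizuki--Nakano theorem just proved. That theorem asserts that for a projective non-singular variety $X$ of dimension $n$ and an ample divisor $L$, one has $H^q(X,\Omega^p_X(L))=0$ whenever $p+q>n$.

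First I would specialize to the top degree $p=n=\dim X$. By definition of the canonical divisor $K_X$, one has $\Omega^n_X=\cO_X(K_X)$, so that $\Omega^n_X(L)=\Omega^n_X\otimes\cO_X(L)\cong\cO_X(K_X+L)$. The Akizuki--Nakano vanishing then reads $H^q(X,\cO_X(K_X+L))=0$ for every $q$ with $n+q>n$, i.e.\ for every $q>0$, which is exactly the assertion.

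There is no real obstacle here: all of the content has already been absorbed into the Akizuki--Nakano theorem, whose proof in turn rests on Theorem~\ref{ovt} applied with $\Sigma=Y$ a general smooth member of $|rL|$ for suitable $r\ge 1$ and $T=L-\frac{1}{r}Y$, combined with the short exact sequence of Lemma~\ref{rs} tensored by $L$ and an induction on $\dim X$. The only things one needs to check directly are the identification $\Omega^{\dim X}_X\cong\cO_X(K_X)$, which is a matter of definition, and the trivial equivalence $n+q>n\Leftrightarrow q>0$. If one prefers, the same conclusion also follows by Serre duality from the dual form $H^q(X,\Omega^p_X(-L))=0$ for $p+q<n$ by taking $p=0$, but the direct specialization above is the shortest route.
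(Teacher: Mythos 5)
Your proof is correct and is exactly the argument the paper intends: the corollary is stated as an immediate consequence of the Akizuki--Nakano theorem, obtained by taking $p=\dim X$ and identifying $\Omega^{\dim X}_X(L)\cong\cO_X(K_X+L)$.
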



\section{Log pairs}


A {\em log pair} $(X,B)$ consists of a normal algebraic variety $X$, 
endowed with an $\R$-Weil divisor $B$ such that $K_X+B$ is $\R$-Cartier.
If $B$ is effective, we call $(X,B)$ a {\em log variety}.

A contraction $f\colon X\to Y$ is a proper morphism such that the natural
homomorphism $\cO_Y\to f_*\cO_X$ is an isomorphism.


\subsection{Totally canonical locus}

Let $(X,B)$ be a log pair. Let $\mu\colon X'\to X$ be a birational contraction
such that $(X',\Exc(\mu)\cup \Supp \mu^{-1}_*B)$ is log smooth. 
Let 
$$
\mu^*(K_X+B)=K_{X'}+B_{X'}
$$ 
be the induced log pair structure on $X'$. We say that $\mu\colon (X',B_{X'})\to (X,B)$
is a log crepant birational contraction.

For a prime divisor $E$ on $X'$, $1-\mult_E(B_{X'})$ is called the log discrepancy
of $(X,B)$ in the valuation of $k(X)$ defined by $E$, denoted $a(E;X,B)$
(see~\cite{Amb06} for example).

Define an open subset of $X$ by the formula
$
U=X\setminus \mu(\Supp (B_{X'})^{>0}).
$
The definition of $U$ does not depend on the choice of $\mu$, by the following

\begin{lem}
Let $\mu\colon (X',B')\to (X,B)$ be a log crepant proper birational morphism of
log pairs with log smooth support. Then $\mu(\Supp {B'}^{>0})=\Supp B^{>0}$.
\end{lem}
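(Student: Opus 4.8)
The plan is to exploit the standing hypothesis that $\mu$ is a morphism of log pairs \emph{with log smooth support}: this forces $X$ and $X'$ to be nonsingular, so $K_X$ and $K_{X'}$ are Cartier and $\mu^*K_X$ is defined. The only nontrivial input I will use is the classical fact that for a proper birational morphism $\mu\colon X'\to X$ of nonsingular varieties the relative canonical divisor $K_{X'}-\mu^*K_X$ is \emph{effective} and supported on the exceptional locus $\Exc(\mu)$. Since $\mu^*(K_X+B)=K_{X'}+B'$, this rewrites as the identity of $\R$-divisors
$$
B'=\mu^*B-(K_{X'}-\mu^*K_X),
$$
in which the subtracted term is effective and $\mu$-exceptional. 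Everything after this is bookkeeping with supports, split into two inclusions.

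For $\Supp B^{>0}\subseteq \mu(\Supp B'^{>0})$: let $G$ be a prime divisor with $\mult_G B>0$ and let $G'=\mu^{-1}_*G$ be its strict transform. Since $\mu$ is proper birational and $X$ is normal, $\mu(\Exc(\mu))$ has codimension at least two in $X$, so $\mu$ is an isomorphism over the generic point of $G$. Hence $G'$ is not $\mu$-exceptional, so $\mult_{G'}(K_{X'}-\mu^*K_X)=0$, while $\mult_{G'}\mu^*B=\mult_G B>0$; therefore $\mult_{G'}B'=\mult_G B>0$ and $G'\subseteq \Supp B'^{>0}$. As $\mu(G')$ is closed (properness) and has $G$ for its closure, $\mu(G')=G$, whence $G\subseteq \mu(\Supp B'^{>0})$.

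For $\mu(\Supp B'^{>0})\subseteq \Supp B^{>0}$: write $B=B^{>0}+B^{<0}$. Then $\mu^*(B^{<0})\le 0$, so $\mu^*B$ has positive coefficient only along components of $\mu^*(B^{>0})$, i.e. $\Supp(\mu^*B)^{>0}\subseteq \Supp\mu^*(B^{>0})=\mu^{-1}(\Supp B^{>0})$. Since $K_{X'}-\mu^*K_X$ is effective, the displayed identity gives $B'\le \mu^*B$, hence $\Supp B'^{>0}\subseteq \Supp(\mu^*B)^{>0}\subseteq \mu^{-1}(\Supp B^{>0})$. Applying $\mu$ and using that $\mu$ is surjective yields $\mu(\Supp B'^{>0})\subseteq \Supp B^{>0}$, and combining the two inclusions finishes the proof.

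I expect no serious obstacle; the one place where care is needed is the very first step. It is precisely the nonsingularity of $X$, built into ``log smooth support'', that makes the exceptional part of $B'$ have \emph{negative} coefficients, so that $\Supp B'^{>0}$ cannot escape into $\Exc(\mu)$ beyond $\mu^{-1}(\Supp B^{>0})$; for a general normal $X$ the exceptional log discrepancies can drop below $1$ and the statement fails (already for the minimal resolution of a non-canonical cyclic quotient surface singularity with $B=0$). Once the identity for $B'$ and the effectivity of $K_{X'}-\mu^*K_X$ are recorded, both inclusions are elementary.
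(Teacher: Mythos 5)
Your proof is correct and follows essentially the same route as the paper: both hinge on the inequality $B'\le \mu^*B$, obtained from log crepancy together with the effectivity of $K_{X'}-\mu^*K_X$ (which uses the smoothness of $X$ built into ``log smooth support''), and both get the reverse inclusion by noting that $\mu$ is an isomorphism over the generic point of any prime in $\Supp B^{>0}$, so its strict transform keeps the same positive coefficient. The only cosmetic difference is that the paper deduces $\mu(\Supp B'^{>0})\subseteq\Supp B^{>0}$ by restricting over $U=X\setminus\Supp B^{>0}$, whereas you split $B$ into its positive and negative parts and track supports of pullbacks; these are equivalent pieces of bookkeeping.
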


\begin{proof} First, we claim that $B'\le \mu^*B$. Indeed,
$X$ is non-singular, so $K_{X'}-\mu^*K_X$ is effective $\mu$-exceptional.
From $\mu^*(K_X+B)=K_{X'}+B'$ we obtain
$$
\mu^*B-B'=K_{X'}-\mu^*K_X\ge 0.
$$
To prove the statement, denote $U=X\setminus \Supp(B^{>0})$.
Then $B|_U\le 0$. The claim for 
$\mu|_{\mu^{-1}(U)}\colon (\mu^{-1}(U),B'|_{\mu^{-1}(U)})\to (U,B|_U)$
gives $B'|_{\mu^{-1}(U)}\le 0$. Therefore 
$\mu(\Supp B'^{>0})\subseteq \Supp B^{>0}$. For the opposite
inclusion, note that $\Supp B^{>0}$ has codimension one. Let 
$E$ be a prime in $\Supp B^{>0}$. Since $\mu$ is an isomorphism 
in a neighbourhood of the generic point of $E$, $E$ also appears
as a prime on $X'$ and $\mult_E(B')=\mult_E(B)>0$. Therefore
$E\subseteq \mu(\Supp B'^{>0})$. 
\end{proof}

We call $U$ the {\em totally canonical locus} of $(X,B)$.
It is the largest open subset $U$ of $X$ with the property that every
geometric valuation over $U$ has log discrepancy at least $1$ with
respect to $(U,B|_U)$. We have 
$$
X\setminus (\Sing(X)\cup \Supp(B^{>0}))\subseteq U\subseteq 
X\setminus \Supp(B^{>0}).
$$
The first inclusion implies that $U$ is dense in $X$.
The second inclusion is an equality if $(X,\Supp B)$ is log
smooth.


\subsection{Non-log canonical locus}

Let $(X,B)$ be a log pair with log smooth support. Write 
$B=\sum_E b_E E$, where the sum runs after the prime divisors 
of $X$. Define
$$
N(B)=\sum_{b_E<0}\lfloor b_E\rfloor E+\sum_{b_E>1}(\lceil b_E\rceil-1)E.
$$
Then $N(B)$ is a Weil divisor. There exists a unique decomposition 
$N(B)=N^+-N^-$, where $N^+,N^-$ are effective divisors with no 
components in common. Then $\Supp(N^+)=\Supp(B^{>1})$ and 
$\Supp(N^-)=\Supp(B^{<0})$. We have 
$$
\lfloor B^{>1}\rfloor-N^+=\sum_{0<b_E\in \Z}E.
$$
In particular $N^+\le \lfloor B^{>1}\rfloor$, and
the two divisors have the same support. Denote 
$$
\Delta(B)=B-N(B).
$$
We have $\Delta(B)=\sum_{b_E<0}\{ b_E\} E+\sum_{b_E>0}(b_E+1-\lceil b_E\rceil)E$.
The following properties hold:
\begin{itemize}
\item[1)] The coefficients of $\Delta(B)$ belong to the interval $[0,1]$.
They are rational if and only if the coefficients of $B$ are.
\item[2)] $\Supp(\Delta(B))=\Supp(B^{>0})\cup \cup_{0>b_E\notin \Z}E$.
In particular, $(X,\Delta(B))$ is a log variety with log canonical singularities
and log smooth support.
\item[3)] $\mult_E\Delta(B)=1$ if and only if $\mult_EB\in \Z_{>0}$.
\end{itemize}

\begin{lem}\label{ninv}
Let $\mu\colon (X',B')\to (X,B)$ be a log crepant birational contraction of
log pairs with log smooth support. Then $\mu^*N(B)-N(B')$ is an effective 
$\mu$-exceptional divisor. In particular,
$$
\cO_X(-N(B))=\mu_*\cO_{X'}(-N(B')).
$$
\end{lem}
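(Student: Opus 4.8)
The plan is to reduce everything to the divisorial inequality that $\mu^*N(B)-N(B')$ is effective with all its nonzero coefficients along $\mu$-exceptional primes. Granting this, the displayed isomorphism follows formally: set $F=\mu^*N(B)-N(B')$, an effective $\mu$-exceptional divisor. Since $X$ and $X'$ are nonsingular, $N(B)$ and $N(B')$ are Cartier, so $\cO_{X'}(-N(B'))\simeq\mu^*\cO_X(-N(B))\otimes\cO_{X'}(F)$, and the projection formula gives $\mu_*\cO_{X'}(-N(B'))\simeq\cO_X(-N(B))\otimes\mu_*\cO_{X'}(F)$. Finally $\mu_*\cO_{X'}(F)=\cO_X$: a local section of $\cO_{X'}(F)$ is a rational function on $X$ regular outside the codimension $\ge 2$ set $\mu(\Exc(\mu))$, hence regular because $X$ is normal.

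For the inequality, the key input is log canonicity of $(X,\Delta(B))$. From $\mu^*(K_X+B)=K_{X'}+B'$ and $B=\Delta(B)+N(B)$, the divisor $C:=B'-\mu^*N(B)$ satisfies $\mu^*(K_X+\Delta(B))=K_{X'}+C$, so for every prime divisor $E'$ over $X$ one has $\mult_{E'}C=1-a(E';X,\Delta(B))$. As $(X,\Delta(B))$ is log smooth with boundary coefficients in $[0,1]$, it is log canonical, hence $\mult_{E'}C\le 1$, i.e. $\mult_{E'}(\mu^*N(B))\ge\mult_{E'}B'-1$ for every $E'$. I would then compare $\mult_{E'}(\mu^*N(B))$ with $\mult_{E'}N(B')$ one prime at a time. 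If $E'$ is not $\mu$-exceptional, $\mu$ is an isomorphism near the generic point of $E'$, so $\mult_{E'}B'=\mult_{\mu(E')}B$ and both sides equal $\mult_{\mu(E')}N(B)$; in particular $\mu^*N(B)-N(B')$ is supported on $\mu$-exceptional primes. If $E'$ is $\mu$-exceptional, put $b'=\mult_{E'}B'$ and $n=\mult_{E'}(\mu^*N(B))\in\Z$; from the floor/ceiling description of $N(\cdot)$ one checks $\mult_{E'}N(B')=\lceil b'-1\rceil$ unless $b'\in\Z_{\le 0}$, in which case $\mult_{E'}N(B')=b'$. In the first case $n\ge b'-1$ and $n\in\Z$ yield $n\ge\lceil b'-1\rceil$, and we are done.

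The remaining case---$E'$ $\mu$-exceptional with $b'\in\Z_{\le 0}$---is where I expect the real work to be, since now the bound only gives $n\ge b'-1$ and one must rule out $n=b'-1$. If $n=b'-1$ then $\mult_{E'}C=1$, i.e. $a(E';X,\Delta(B))=0$; I claim $\mu(E')$ then lies in no component $G$ of $N^-$. Any such $G$ has $\mult_G B<0$, hence $G\subseteq\Supp B$ and $\mult_G\Delta(B)=\{\mult_G B\}<1$; if $\mu(E')\subseteq G$, then $\Delta'':=\Delta(B)+(1-\mult_G\Delta(B))G$ has support in $\Supp B$ and coefficients in $[0,1]$, so $(X,\Delta'')$ is log canonical, yet $0\le a(E';X,\Delta'')=a(E';X,\Delta(B))-(1-\mult_G\Delta(B))\mult_{E'}(\mu^*G)=-(1-\mult_G\Delta(B))\mult_{E'}(\mu^*G)<0$ because $\mult_{E'}(\mu^*G)\ge1$ and $\mult_G\Delta(B)<1$---a contradiction. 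Since $\mu(E')$ is irreducible it lies in no component of $N^-$, so $\mult_{E'}(\mu^*N^-)=0$ and $n=\mult_{E'}(\mu^*N^+)\ge0$, contradicting $n=b'-1<0$. Hence $n\ge b'=\mult_{E'}N(B')$, completing the proof. Apart from this borderline case the argument is bookkeeping with the description of $N(\cdot)$ and the crepant pullback; the main obstacle is precisely that log canonicity of $(X,\Delta(B))$ loses one unit along $\mu$-exceptional valuations with non-positive integral $B'$-coefficient, and one must show separately that the borderline valuations have centers disjoint from $\Supp N^-$.
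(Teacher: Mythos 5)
Your proof is correct, and its skeleton coincides with the paper's: the difference $\mu^*N(B)-N(B')$ is exceptional because $N(\cdot)$ is computed componentwise, its coefficient at each prime divisor $E'$ of $X'$ is an integer which is at least $-1$ by log canonicity of the log smooth pair $(X,\Delta(B))$ (your bound $n\ge b'-1$ is the paper's inequality $m_{E'}=a(E';X,\Delta(B))-a(E';X',\Delta(B'))\ge -1$, and your exceptional case $b'\in\Z_{\le 0}$ is exactly the case $\mult_{E'}\Delta(B')=0$), so everything hinges on excluding the borderline value $-1$. Where you genuinely diverge is in how you exclude it. The paper observes that $m_{E'}=-1$ forces $a(E';X,\Delta(B))=0$, invokes the standard description of log canonical places of a log smooth pair to conclude that $\mu(E')$ is a stratum of the coefficient-one components of $\Delta(B)$, hence of components of $B$ with coefficients in $\Z_{\ge 1}$, so $B\ge\Delta(B)$ near the generic point of $\mu(E')$; crepancy then gives $\mult_{E'}B'\ge 1$, contradicting $\mult_{E'}\Delta(B')=0$. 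You instead bump the coefficient of any component $G$ of $N(B)^-$ containing $\mu(E')$ up to $1$ and use log canonicity of the bumped pair (still log smooth with coefficients in $[0,1]$) to show $\mu(E')\not\subseteq\Supp N(B)^-$, whence $n=\mult_{E'}\mu^*N(B)^+\ge 0$, contradicting $n=b'-1<0$. Both exclusions are valid; yours trades the classification of zero-log-discrepancy places over a log smooth pair for a second use of the elementary fact that log smooth pairs with boundary coefficients in $[0,1]$ are log canonical, which makes it slightly more self-contained. Your explicit deduction of $\cO_X(-N(B))=\mu_*\cO_{X'}(-N(B'))$ from effectivity and exceptionality (projection formula plus $\mu_*\cO_{X'}(F)=\cO_X$ by normality of $X$) is also fine; the paper leaves that step implicit.
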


\begin{proof} The operation $B\mapsto N(B)$ is defined
componentwise, so $\mu^*N(B)-N(B')$ is clearly $\mu$-exceptional.
Decompose $B=\Delta+N$ and $B'=\Delta'+N'$. From $\mu^*(K+B)=K_{X'}+B'$
we deduce 
$$
\mu^*N-N'=K_{X'}+\Delta'-\mu^*(K+\Delta).
$$
In particular, let $E$ be a prime divisor on $X'$. And 
$m_E=\mult_E(\mu^*N-N')$. Then 
$$
m_E=a(E;X,\Delta)-a(E;X',\Delta').
$$
Since $(X,\Delta)$ has log canonical singularities and $\Delta'$ is effective, we obtain 
$$
m_E\ge 0-1\ge -1.
$$
If $m_E>-1$, then $m_E\ge 0$, as it is an integer. Else, $m_E=-1$.
In this case $a(E;X,\Delta)=0$ and $a(E;X',\Delta')=1$. 
From $a(E;X,\Delta)=0$, we deduce that $\mu(E)$ is the transverse
intersection of some components of $\Delta$ with coefficient $1$.
That is $\mu(E)$ is the transverse intersection of some components
of $B$ with coefficients in $\Z_{\ge 1}$. In particular, $B\ge \Delta$
near the generic point of $\mu(E)$. We deduce
$$
0=a(E;X,\Delta)\ge a(E;X,B)=a(E;X',B')
$$ 
That is $\mult_E B'\ge 1$. Then $\mult_E\Delta'>0$, so 
$a(E;X',\Delta')=1-\mult_E\Delta'<1$. Contradiction.
\end{proof}

\begin{defn}
Let $(X,B)$ be a log variety. Let $\mu\colon (X',B_{X'})\to (X,B)$
be a log crepant log resolution. Define 
$$
\cI=\mu_*\cO_{X'}(-N(B_{X'})).
$$
The coherent $\cO_X$-module $\cI$ is independent of the choice of $\mu$,
by Lemma~\ref{ninv}. Since $B$ is effective, the divisor
$N(B_{X'})^-=-\lfloor B_{X'}^{<0}\rfloor$ is $\mu$-exceptional.
Therefore 
$$
\cI\subseteq \mu_*\cO_{X'}(N(B_{X'})^-)=\cO_X.
$$
We call $\cI$ the {\em ideal sheaf of the non-log canonical locus} of 
$(X,B)$. It defines a closed subscheme $(X,B)_{-\infty}$ of $X$ by the
short exact sequence 
$$
0\to \cI\to \cO_X\to \cO_{(X,B)_{-\infty}}\to 0.
$$
We call $(X,B)_{-\infty}$ the 
{\em locus of non-log canonical singularities} of $(X,B)$.
It is empty if and only if $(X,B)$ has log canonical singularities.
The complement $X\setminus (X,B)_{-\infty}$ is the largest open 
subset on which $(X,B)$ has log canonical singularities.
\end{defn}

\begin{rem}\label{comp}
We introduced in~\cite{Amb03} another scheme structure on the locus
of non-log canonical singularities of a log variety $(X,B)$. The
two schemes have the same support, but their structure sheaves usually 
differ. To compare them, consider a log crepant log resolution 
$\mu\colon (X',B_{X'})\to (X,B)$. Define 
$$
N^s=\lfloor B_{X'}^{\ne 1}\rfloor=N(B_{X'})+\sum_{\mult_E(B_{X'})\in \Z_{>1}}E.
$$ 
Denote $B_{X'}=\sum_E b_E E$. Then 
$N^s-N(B_{X'})=\sum_{b_E\in \Z_{>1}}E$ and $\lfloor B_{X'}\rfloor-N^s=\sum_{b_E=1}E$.
In particular
$$
N\le N^s\le \lfloor B_{X'}\rfloor.
$$
We obtain inclusions of ideal sheaves
$\mu_*\cO_{X'}(-N)\supseteq \mu_*\cO_{X'}(-N^s)\supseteq
\mu_*\cO_{X'}(-\lfloor B_{X'}\rfloor)$. Equivalently, we have closed
embeddings of subschemes of $X$
$$
Y\hookrightarrow Y^s\hookrightarrow \LCS(X,B),
$$
where $Y^s$ is the scheme structure introduced in~\cite{Amb03} and
$\LCS(X,B)$ is the subscheme structure on the non-klt locus of
$(X,B)$.

Consider for example the log variety $(\bA^2,2H_1+H_2)$, where
$H_1,H_2$ are the coordinate hyperplanes. The above inclusions are
$$
H_1\hookrightarrow 2H_1\hookrightarrow 2H_1+H_2. 
$$
\end{rem}

\begin{lem}
Let $\mu\colon (X',B')\to (X,B)$ be a log crepant birational contraction of
log pairs with log smooth support. 
Then $\mu^*\lfloor B^{\ne 1}\rfloor-\lfloor B_{X'}^{\ne 1}\rfloor$ is an effective 
$\mu$-exceptional divisor. In particular,
$$
\cO_X(-\lfloor B^{\ne 1}\rfloor)=\mu_*\cO_{X'}(-\lfloor B_{X'}^{\ne 1}\rfloor).
$$
\end{lem}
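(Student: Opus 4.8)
The plan is to run the argument of Lemma~\ref{ninv} with the operation $B\mapsto N(B)$ replaced by $B\mapsto\lfloor B^{\ne1}\rfloor$. First I would note, as in Remark~\ref{comp}, that $\lfloor B^{\ne1}\rfloor$ is obtained from $B$ by the componentwise rule $b\mapsto\lfloor b\rfloor$ for $b\ne1$ and $1\mapsto0$; being componentwise, $\mu^*\lfloor B^{\ne1}\rfloor-\lfloor (B')^{\ne1}\rfloor$ is automatically $\mu$-exceptional, since over the open locus where $\mu$ is an isomorphism the coefficients of $B$ and $B'$ coincide. So the content of the lemma is effectivity.

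To get effectivity I would introduce $\Theta=B-\lfloor B^{\ne1}\rfloor$, the analogue of $\Delta(B)$; equivalently $\Theta=\Delta(B)-\sum_{b_E\in\Z_{>1}}E$, so that $(X,\Theta)$ is log canonical, being obtained from the log canonical pair $(X,\Delta(B))$ by subtracting an effective divisor, and $\mult_{E_0}\Theta=1$ precisely when $b_{E_0}=1$. Let $\Theta_{X'}$ be the log crepant pull-back, $K_{X'}+\Theta_{X'}=\mu^*(K_X+\Theta)$, and set $\Theta'=B'-\lfloor (B')^{\ne1}\rfloor$. Subtracting $K_{X'}+\Theta_{X'}=\mu^*(K_X+B)-\mu^*\lfloor B^{\ne1}\rfloor$ from $K_{X'}+B'=\mu^*(K_X+B)$ gives $\Theta_{X'}=B'-\mu^*\lfloor B^{\ne1}\rfloor$, hence for each prime $E$ on $X'$
$$
\mult_E\bigl(\mu^*\lfloor B^{\ne1}\rfloor-\lfloor (B')^{\ne1}\rfloor\bigr)=\mult_E\Theta'-\mult_E\Theta_{X'}.
$$
Now $\mult_E\Theta'\ge0$ since $\Theta'$ is effective, and $\mult_E\Theta_{X'}=1-a(E;X,\Theta)\le1$ since $(X,\Theta)$ is log canonical; so the left-hand side is an integer $\ge-1$, and it remains to exclude the value $-1$.

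Suppose this integer equals $-1$. Then $\mult_E\Theta'=0$ and $a(E;X,\Theta)=0$. The first forces $b'_E\ne1$, since a coefficient $1$ of $B'$ would make $\mult_E\Theta'=1$. For the second I would use, exactly as in Lemma~\ref{ninv}, that $\mu(E)$ is then an irreducible component of a transverse intersection of components of $\Theta$ of coefficient $1$, that is, of components of $B$ of coefficient exactly $1$; at the generic point of such a stratum no other component of $\Supp B$ passes, so $B=\Theta$ near the generic point of $\mu(E)$, whence $a(E;X,B)=a(E;X,\Theta)=0$, and by log crepancy $a(E;X',B')=0$, i.e. $b'_E=1$ — a contradiction. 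This shows $\mu^*\lfloor B^{\ne1}\rfloor-\lfloor (B')^{\ne1}\rfloor\ge0$. For the displayed identity I would write $\lfloor (B')^{\ne1}\rfloor=\mu^*\lfloor B^{\ne1}\rfloor-F$ with $F\ge0$ $\mu$-exceptional and combine the projection formula (legitimate since $X$ is nonsingular, so $\cO_X(-\lfloor B^{\ne1}\rfloor)$ is invertible) with $\mu_*\cO_{X'}(F)=\cO_X$, which holds because $\mu$ is a contraction.

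The main obstacle, exactly as for Lemma~\ref{ninv}, is excluding the value $-1$: this rests on the fact that a divisorial valuation of log discrepancy $0$ over a log smooth log canonical pair has center a stratum along which the boundary is reduced, so that the fractional part of $B$ and its coefficients in $\Z_{\ge2}$ — precisely the data modified by $B\mapsto\lfloor B^{\ne1}\rfloor$ — do not interfere near the generic point of $\mu(E)$. The rest is the same bookkeeping as for $N(B)$.
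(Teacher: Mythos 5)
Your proposal is correct and follows essentially the same route as the paper: your $\Theta=B-\lfloor B^{\ne1}\rfloor$ is exactly the paper's $B^{=1}+\{B^{\ne1}\}$, your identity $\mult_E\bigl(\mu^*\lfloor B^{\ne1}\rfloor-\lfloor B_{X'}^{\ne1}\rfloor\bigr)=\mult_E\Theta'-\mult_E\Theta_{X'}$ is a rearrangement of the paper's displayed equation, and the exclusion of the value $-1$ via a valuation of log discrepancy zero over the log smooth lc pair is the same argument (the paper concludes that the difference would then be $0$, you by contradicting $b'_E\ne1$; these are equivalent). You additionally spell out the projection-formula step for the ``in particular'' claim, which the paper leaves implicit.
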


\begin{proof} The operation $B\mapsto \lfloor B^{\ne 1}\rfloor$ is defined
componentwise, so $\mu^*\lfloor B^{\ne 1}\rfloor-\lfloor B_{X'}^{\ne 1}\rfloor$ 
is clearly $\mu$-exceptional. The equality $\mu^*(K+B)=K_{X'}+B_{X'}$ becomes
$$
\mu^*\lfloor B^{\ne 1}\rfloor-\lfloor B_{X'}^{\ne 1}\rfloor=
K_{X'}+B_{X'}^{=1}+\{B_{X'}^{\ne 1}\}-\mu^*(K+B^{=1}+\{B^{\ne 1}\}).
$$
Consider the multiplicity of the left hand side at a prime on $X'$.
It is an integer. The right hand side is $\ge -1$. If $>-1$, it is
$\ge 0$. Suppose it equals $-1$. This implies
$a(E;X,B^{=1}+\{B^{\ne 1}\})=0$. Then $a(E;X,B^{=1})=0$ and $B=B^{=1}$
near the generic point of $\mu(E)$. Then $a(E;X',B_{X'})=0$.
Then the difference is zero. Contradiction.
\end{proof}


\subsection{Lc centers}

For the definition and properties of lc centers, see~\cite{Amb06}. 

\begin{lem}\label{ad}
Let $(X,B)$ be a log variety with log canonical singularities.
Let $D$ be an effective $\R$-Cartier $\R$-divisor on $X$, let
$Z$ be the union of lc centers of $(X,B)$ contained in $\Supp D$, 
with reduced structure. Then $(X,B+\epsilon D)_{-\infty}=Z$ for $0<\epsilon\ll 1$.
\end{lem}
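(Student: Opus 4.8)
The plan is to pass to a log crepant log resolution, compute there the divisor $N(-)$ for small $\epsilon$, and read off from it both the scheme structure and the support of $(X,B+\epsilon D)_{-\infty}$. Concretely, I fix a log resolution $\mu\colon X'\to X$ of $(X,B+D)$ (so that $\Exc(\mu)\cup\Supp\mu^{-1}_*(B+D)$ is a normal crossings divisor) and write $\mu^*(K_X+B)=K_{X'}+B_{X'}$, so that $\mu^*(K_X+B+\epsilon D)=K_{X'}+B_{X'}+\epsilon\mu^*D$ and the pair $(X',B_{X'}+\epsilon\mu^*D)$ has log smooth support. By the independence statement following Lemma~\ref{ninv}, the ideal sheaf of $(X,B+\epsilon D)_{-\infty}$ is $\cI=\mu_*\cO_{X'}(-N(B_{X'}+\epsilon\mu^*D))$, and the particular choice of $\mu$ is irrelevant.

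Next I analyse $N(B_{X'}+\epsilon\mu^*D)$. Since $(X,B)$ is log canonical, every coefficient of $B_{X'}$ is $\le 1$, and $\mu^*D\ge 0$; hence for $0<\epsilon\ll 1$ (a bound depending on $\mu$, so on $D$) a coefficient of $B_{X'}+\epsilon\mu^*D$ lies in $(1,2)$ exactly on the primes $E$ with $\mult_EB_{X'}=1$ and $E\subseteq\Supp\mu^*D$, is never $\ge 2$, and is negative exactly on the primes $E$ with $\mult_EB_{X'}<0$. Therefore $N(B_{X'}+\epsilon\mu^*D)=N^+-N^-$, with $N^+=\sum_{\mult_EB_{X'}=1,\ E\subseteq\Supp\mu^*D}E$ \emph{reduced}, $N^-=-\lfloor B_{X'}^{<0}\rfloor$ effective and $\mu$-exceptional (because $B\ge 0$ forces every prime with $\mult_EB_{X'}<0$ to be $\mu$-exceptional), and $N^+,N^-$ without common component. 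The key claim is then $\cI=\mu_*\cO_{X'}(N^--N^+)=\mu_*\cO_{X'}(-N^+)$: any local section of $\cI$ is a section of $\mu_*\cO_{X'}(N^-)=\cO_X$, hence a regular function $f$, so its divisor $(f)$ and therefore its pullback $\mu^*(f)$ are effective; combining $\mu^*(f)\ge 0$ with the defining inequality $\mu^*(f)\ge N^+-N^-$ and comparing coefficients prime by prime (using that $N^+$ is reduced and disjoint from $N^-$) gives $\mu^*(f)\ge N^+$.

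From this the two assertions follow. Reducedness: pushing forward $0\to\cO_{X'}(-N^+)\to\cO_{X'}\to\cO_{N^+}\to 0$ and using $\mu_*\cO_{X'}=\cO_X$ yields an injection of sheaves of rings $\cO_X/\cI\hookrightarrow\mu_*\cO_{N^+}$ with reduced target (as $N^+$ is reduced), so $\cO_X/\cI$ is reduced, i.e.\ $(X,B+\epsilon D)_{-\infty}$ is a reduced subscheme. Support: a prime $E$ with $\mult_EB_{X'}=1$ has $a(E;X,B)=0$, so $\mu(E)$ is an lc center of $(X,B)$, contained in $\Supp D$ whenever $E\subseteq\Supp\mu^*D$; hence $\Supp(\cO_X/\cI)=\mu(\Supp N^+)\subseteq Z$. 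Conversely, for an lc center $W\subseteq\Supp D$, choosing a divisorial lc place $F$ of $(X,B)$ with center $W$ one has $a(F;X,B)=0$ and the multiplicity of $F$ in the pullback of $D$ positive (as $W\subseteq\Supp D$ and $D\ge 0$), so $a(F;X,B+\epsilon D)<0$ and $W$ lies in the non-log canonical locus of $(X,B+\epsilon D)$. Thus $\Supp(\cO_X/\cI)=Z$, and a reduced closed subscheme being determined by its support, $(X,B+\epsilon D)_{-\infty}=Z$.

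I expect the only genuinely delicate point to be the identity $\mu_*\cO_{X'}(N^--N^+)=\mu_*\cO_{X'}(-N^+)$, and through it the reducedness of $(X,B+\epsilon D)_{-\infty}$: a priori $N(B_{X'}+\epsilon\mu^*D)$ drags along the negative part $N^-$, which need not be reduced, and one must verify it is invisible to $\mu_*$ — which it is precisely because sections of the ideal are honest regular functions and hence have effective divisors. The remaining ingredients, namely the coefficient bookkeeping for small $\epsilon$ and the comparison of $\Supp(\cO_X/\cI)$ with the finitely many lc centers of $(X,B)$ contained in $\Supp D$, are routine.
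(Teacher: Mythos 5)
Your proof is correct, and it takes a genuinely different route from the paper's. The coefficient bookkeeping for $0<\epsilon\ll 1$, giving $N(B_{X'}+\epsilon\mu^*D)=N^+-N^-$ with $N^+=\sum_{\mult_EB_{X'}=1,\,E\subseteq\Supp\mu^*D}E$ reduced and $N^-=-\lfloor B_{X'}^{<0}\rfloor$ exceptional, is the same in both arguments. After that the paper identifies $\mu_*\cO_{X'}(-N)$ with $\cI_Z$ directly, by a diagram chase comparing the pushforward of $0\to\cO_{X'}(A-\Sigma')\to\cO_{X'}(A)\to\cO_{\Sigma'}(A|_{\Sigma'})\to 0$ with $0\to\cI_Z\to\cO_X\to\cO_Z\to 0$; the essential point there is the vanishing of the connecting map $\partial$, which is not formal but is deduced from the torsion-freeness result \cite[Theorem 3.2.(i)]{Amb03} applied to an auxiliary pair $(X',B')$, and the comparison with $\cO_Z$ also forces the extra requirement that $\mu^{-1}(Z)$ have pure codimension one. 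You avoid both inputs: you show the negative part is invisible under $\mu_*$ by the elementary remark that sections of the ideal are regular functions, whose pulled-back divisors are effective, so the inequality against $N^+-N^-$ improves componentwise to one against the reduced $N^+$ (here ``disjoint from $N^-$'' should be read as ``without common components''); then reducedness of $(X,B+\epsilon D)_{-\infty}$ follows from the injection $\cO_X/\cI\hookrightarrow\mu_*\cO_{N^+}$, the inclusion $\Supp(\cO_X/\cI)\subseteq\mu(\Supp N^+)\subseteq Z$ (only this inclusion is needed, and it is what your injection gives) from the discrepancy bookkeeping, and the reverse inclusion from a divisorial lc place over each lc center $W\subseteq\Supp D$ together with the paper's stated property that the complement of the non-lc subscheme is the largest open set where the pair is log canonical; uniqueness of the reduced structure on a closed subset finishes. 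The trade-off: the paper's argument proves slightly more at the sheaf level, namely the surjectivity $\cO_X\to\mu_*\cO_{\Sigma'}(A|_{\Sigma'})$ and the identification $\cO_Z\isoto\mu_*\cO_{\Sigma'}(A|_{\Sigma'})$, whereas your argument is more elementary, works on any log resolution of $(X,B+D)$, and needs no vanishing theorem.
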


\begin{proof}
Let $\mu\colon X'\to X$ be a resolution of singularities such that
$(X',\Supp B_{X'}\cup \Supp \mu^*D)$ is log smooth, where 
$\mu^*(K_X+B)=K_{X'}+B_{X'}$, and $\mu^{-1}(Z)$ has pure codimension one. 
We have $\mu^*(K_X+B+\epsilon D)=K_{X'}+B_{X'}+\epsilon\mu^*D$. Denote
$$
\Sigma'=\sum_{\mult_E(B_{X'})=1, \mu(E)\subseteq Z} E.
$$
Since the coefficients of $B_{X'}$ are at most $1$, for $0<\epsilon\ll 1$ we obtain the formula
\begin{align*}
N(B_{X'}+\epsilon\mu^*D) & =\lfloor (B_{X'})^{<0}\rfloor+ \sum_{\mult_E(B_{X'})=1, \mu(E)\subseteq \Supp D} E \\
 & =\lfloor (B_{X'})^{<0}\rfloor+ \Sigma'.
\end{align*}

Denote $A=-\lfloor (B_{X'})^{<0}\rfloor$, an effective $\mu$-exceptional divisor on $X'$.
Consider the commutative diagram with exact rows
\[ 
\xymatrix{
0 \ar[r] & \mu_*\cO_{X'}(A-\Sigma')\ar[r] & \mu_*\cO_{X'}(A) \ar[r]^r & \mu_*\cO_{\Sigma'}(A|_{\Sigma'})\ar[r]^\partial & R^1\mu_*\cO_{X'}(A-\Sigma') \\ 
0 \ar[r] & \cI_Z\ar[r] \ar[u]^\alpha & \cO_X\ar[r] \ar[u]^\beta & \cO_Z\ar[r] \ar[u]^\gamma & 0
} \]

We claim that $\partial=0$. Indeed, denote $B'=\{B_{X'}^{<0}\}+B_{X'}^{>0}-\Sigma'$.
Then $A-\Sigma'\sim_\R K_{X'}+B'$ over $X$, $(X',B')$ has log canonical singularities, and 
$\mu(C)\nsubseteq Z$ for every lc center $C$ of $(X',B')$. The sheaf $\mu_*\cO_{\Sigma'}(A|_{\Sigma'})$
is supported by $Z$, so the image of $\partial$ is supported by $Z$. Suppose by contradiction
that $\partial$ is non-zero. Let $s$ be a non-zero local section of $\im \partial$.
By~\cite[Theorem 3.2.(i)]{Amb03}, $(X',B')$ admits an lc center $C$ such that $\mu(C)\subseteq \Supp(s)$.
Since $\Supp(s)\subseteq Z$, we obtain $\mu(C)\subseteq Z$, a contradiction.

Since $A$ is effective and $\mu$-exceptional, $\beta$ is an isomorphism. The map $\gamma$
is injective. Since $r$ is surjective, $\gamma$ is also surjective, hence an isomorphism. We conclude that $\alpha$
is an isomorphism. That is $\cI_Z=\mu_*\cO_{X'}(-N(B_{X'}+\epsilon\mu^*D))=\cI_{(X,B+\epsilon D)_{-\infty}}$.
\end{proof}


\section{Injectivity for log varieties}


\begin{thm}\label{loginj}
Let $(X,B)$ be a proper log variety with log canonical singularities. 
Let $U$ be the totally canonical locus of $(X,B)$. 
Let $L$ be a Cartier divisor on $X$ such that $L\sim_\R K+B$.
Then the restriction homomorphism
$$
H^1(X,\cO_X(L))\to H^1(U,\cO_U(L|_U))
$$
is injective.
\end{thm}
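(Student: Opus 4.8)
The plan is to descend the injectivity from a suitable log resolution of $(X,B)$, using Lemma~\ref{i1} for the descent — which is exactly what confines us to $q=1$, since the analogous maps $R^p\pi_*(f_*\cF')\to R^p\pi'_*(\cF')$ are not injective for $p\ge 2$ (cf.\ the example following Lemma~\ref{i1}) — and using Theorem~\ref{oitlog} on the resolution itself. As a first reduction, let $U_0=U\cap X_{\mathrm{reg}}$ be the intersection of $U$ with the smooth locus of $X$; this is a dense open subset of $X$ with $U_0\subseteq U$. Since restriction to $U_0$ factors through restriction to $U$, it is enough to prove that the restriction $H^1(X,\cO_X(L))\to H^1(U_0,\cO_{U_0}(L|_{U_0}))$ is injective. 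The point of passing to $U_0$ is that $(X,B)$ is already log smooth there (it is $(U_0,0)$ with $U_0$ nonsingular), so I may take the resolution below to be an isomorphism over $U_0$.

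Choose a log crepant log resolution $\mu\colon (X',B_{X'})\to (X,B)$ which is an isomorphism over $U_0$, and let $\Sigma'=\Exc(\mu)\cup\Supp\mu^{-1}_*B$ be the normal crossings divisor with $(X',\Sigma')$ log smooth. Because $\mu$ is an isomorphism over $U_0$ and $B|_{U_0}=0$, one checks that $\Sigma'=\mu^{-1}(X\setminus U_0)$ as sets (the codimension one part of $X\setminus U_0$ is $\Supp B$, and $\mu^{-1}(\Sing X)\subseteq\Exc(\mu)$), hence $X'\setminus\Sigma'=\mu^{-1}(U_0)$ and $B_{X'}$ vanishes there. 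Since $(X,B)$ is log canonical we have $\mult_E B_{X'}\le 1$ for every prime divisor $E$ on $X'$; using this I pick an effective $\mu$-exceptional divisor $D'$ such that $\Gamma':=B_{X'}+D'$ is effective, has all its nonzero coefficients in $(0,1]$, and has support exactly $\Sigma'$ — for instance $D'=-\lfloor B_{X'}\rfloor+\sum_{E\subseteq\Sigma',\ \mult_E B_{X'}\in\Z}E$. Put $\mathcal L':=\mu^*L+D'$, a Cartier divisor on $X'$ with $\mathcal L'\sim_\R K_{X'}+\Gamma'$.

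Now the three ingredients combine. First, $D'$ is effective and $\mu$-exceptional, so by the projection formula $\mu_*\cO_{X'}(\mathcal L')=\cO_X(L)$, and Lemma~\ref{i1} (for $\mu$ and $\cF'=\cO_{X'}(\mathcal L')$ over $\Spec k$) shows that $H^1(X,\cO_X(L))\to H^1(X',\cO_{X'}(\mathcal L'))$ is injective. Second, applying Theorem~\ref{oitlog} to $X'$, the normal crossings divisor $\Sigma'$ and $\mathcal L'\sim_\R K_{X'}+\sum_i b_iE_i$ with $0<b_i=\mult_{E_i}\Gamma'\le 1$, the restriction $H^1(X',\cO_{X'}(\mathcal L'))\to H^1(X'\setminus\Sigma',\cO_{X'}(\mathcal L')|_{X'\setminus\Sigma'})$ is injective. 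Third, on $X'\setminus\Sigma'=\mu^{-1}(U_0)$ we have $D'=0$ (its support lies in $\Exc(\mu)\subseteq\Sigma'$), so $\mathcal L'|_{\mu^{-1}(U_0)}=\mu^*(L|_{U_0})$, and $\mu$ restricts to an isomorphism $\mu^{-1}(U_0)\isoto U_0$, which identifies the last group with $H^1(U_0,\cO_{U_0}(L|_{U_0}))$. By functoriality (the square comparing the map of Lemma~\ref{i1} with restriction along the open immersion $\mu^{-1}(U_0)\subseteq X'$ commutes) the composite of these three maps is the restriction $H^1(X,\cO_X(L))\to H^1(U_0,\cO_{U_0}(L|_{U_0}))$; being a composite of injections it is injective, which by the first paragraph finishes the proof.

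The crux is the second paragraph: producing a log resolution on which $B_{X'}$, after correction by an exceptional divisor $D'$ that $\mu_*$ absorbs harmlessly, becomes a genuine boundary supported exactly on $X'\setminus\mu^{-1}(U_0)$, so that Theorem~\ref{oitlog} restricts precisely onto $\mu^{-1}(U_0)$. Passing from $U$ to $U_0$ is what makes this possible: over the (mildly) singular points of $U$ one cannot arrange $\mu$ to be an isomorphism, and the negative — possibly non-integral — coefficients that $B_{X'}$ then has over those points would force $\Gamma'$ to pick up support outside $\mu^{-1}(X\setminus U)$, breaking the argument. The confinement to $H^1$ is the other limitation, inherited from Lemma~\ref{i1}; removing it would require a vanishing theorem or the degeneration of a Leray spectral sequence, as noted in the introduction.
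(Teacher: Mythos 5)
Your proof is correct and follows essentially the same route as the paper: descend along a log crepant resolution using Lemma~\ref{i1}, after twisting by an effective $\mu$-exceptional divisor so that Theorem~\ref{oitlog} applies upstairs — indeed your $D'$ and $\Gamma'$ coincide with the paper's $\lfloor A\rfloor$ and $B'$. The only cosmetic difference is your preliminary reduction to $U_0=U\cap X_{\mathrm{reg}}$ with $\mu$ an isomorphism over $U_0$, where the paper instead works with $U'=X'\setminus\Supp B'$ and just verifies the inclusion $U'\subseteq\mu^{-1}(U)$.
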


\begin{proof}
Let $\mu\colon X'\to X$ be a birational contraction such that
$X'$ is non-singular, the exceptional locus $\Exc\mu$ has codimension
one, and $\Exc\mu\cup \Supp(\mu^{-1}_*B)$ has normal crossings.
We can write 
$$
K_{X'}+\mu^{-1}_*B+\Exc\mu=\mu^*(K+B)+A,
$$
with $A$ supported by $\Exc\mu$. Since $(X,B)$ has log canonical 
singularities, $A$ is effective. Denote $B'=\mu^{-1}_*B+\Exc\mu-\{A\}$
and $L'=\mu^*L+\lfloor A\rfloor$. We obtain
$$
L'\sim_\R K_{X'}+B'.
$$
Denote $U'=X'\setminus B'$. We claim that $U'\subseteq \mu^{-1}(U)$.
Indeed, this is equivalent to the inclusion 
$$
\Supp(B')\supseteq \mu^{-1}\mu(\Supp B_{X'}^{>0}).
$$
By Zariski's Main Theorem, $\Exc\mu=\mu^{-1}(X\setminus V)$, where 
$V$ is the largest open subset of $X$ such that $\mu$ is an isomorphism
over $V$. Over $X\setminus V$, the inclusion is clear since 
$\Exc\mu\subseteq \Supp B'$. Over $V$, $\mu$ is an isomorphism 
and the inclusion becomes an equality. This proves the claim.

Since $A$ is effective and $\mu_*A=0$, we have 
$\cO_X(L)\isoto\mu_*\cO_{X'}(L')$.
From $U'\subseteq \mu^{-1}(U)$ we obtain a commutative diagram
\[ 
\xymatrix{
H^1(X',\cO_{X'}(L')) \ar[rr]^{\alpha'} & & H^1(U',\cO_{U'}(L'|_{U'}))  \\ 
H^1(X,\cO_X(L)) \ar[rr]^\alpha \ar[u]^\beta & & H^1(U,\cO_U(L|_U))\ar[u]
} \]
By Theorem~\ref{oitlog}, $\alpha'$ is injective. Since 
$\cO_X(L)=\mu_*\cO_{X'}(L')$, Lemma~\ref{i1} implies that
$\beta$ is injective. Then
$\alpha'\circ \beta$ is injective. The diagram is commutative,
so $\alpha$ is injective.
\end{proof}

\begin{cor}\label{wd} In the assumptions of Theorem~\ref{loginj}, let $D$
be an effective Cartier divisor such that $\Supp(D)\cap U=\emptyset$.
Then we have a short exact sequence 
$$
0\to \Gamma(X,\cO_X(L))\to \Gamma(X,\cO_X(L+D))\to \Gamma(D,\cO_D(L+D))\to 0.
$$
\end{cor}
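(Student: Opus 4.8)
The plan is to derive the statement directly from the injectivity given by Theorem~\ref{loginj}, using the standard long exact cohomology sequence associated to the short exact sequence of sheaves
$$
0\to \cO_X(L)\to \cO_X(L+D)\to \cO_D(L+D)\to 0.
$$
The map $\Gamma(X,\cO_X(L+D))\to \Gamma(D,\cO_D(L+D))$ is surjective if and only if the connecting homomorphism $\Gamma(D,\cO_D(L+D))\to H^1(X,\cO_X(L))$ is zero, equivalently if and only if $H^1(X,\cO_X(L))\to H^1(X,\cO_X(L+D))$ is injective. So the whole task reduces to establishing that last injectivity.

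To get it, first I would form the commutative square with vertical arrows the restriction maps to the totally canonical locus $U$:
\[
\xymatrix{
H^1(X,\cO_X(L)) \ar[r]\ar[d] & H^1(X,\cO_X(L+D)) \ar[d] \\
H^1(U,\cO_U(L|_U)) \ar[r]^{\sim} & H^1(U,\cO_U((L+D)|_U))
}
\]
The bottom arrow is an isomorphism because $\Supp(D)\cap U=\emptyset$, so $(L+D)|_U=L|_U$ as divisors on $U$ and the sheaf map is the identity. The left vertical arrow is injective by Theorem~\ref{loginj}, since $L\sim_\R K+B$ and $U$ is the totally canonical locus of $(X,B)$. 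Composing, the map $H^1(X,\cO_X(L))\to H^1(U,\cO_U((L+D)|_U))$ factors through $H^1(X,\cO_X(L+D))$ and is injective; hence $H^1(X,\cO_X(L))\to H^1(X,\cO_X(L+D))$ is injective, which is exactly what we need. Chasing this back through the long exact sequence yields the asserted short exact sequence
$$
0\to \Gamma(X,\cO_X(L))\to \Gamma(X,\cO_X(L+D))\to \Gamma(D,\cO_D(L+D))\to 0,
$$
where left exactness is automatic since $\cO_X(L)\to \cO_X(L+D)$ is injective.

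The only point requiring a moment's care — and the closest thing to an obstacle — is checking that $\Supp(D)\cap U=\emptyset$ really forces the bottom horizontal map to be an isomorphism rather than merely injective: this is immediate because on $U$ the inclusion $\cO_U(L|_U)\hookrightarrow\cO_U((L+D)|_U)$ is an equality of subsheaves of the constant sheaf $\mathcal{K}_U$, as $D|_U=0$. Everything else is the formal diagram chase and the invocation of Theorem~\ref{loginj}, so no new ideas are needed. One should also note that $L+D$ is again a Cartier divisor with $L+D\sim_\R K+B+D$, but we do not need $(X,B+D)$ to be log canonical — the argument uses injectivity only for $(X,B)$ and the vanishing of $D$ on $U$.
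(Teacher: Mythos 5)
Your proposal is correct and matches the paper's own argument essentially verbatim: the same commutative square comparing $H^1(X,\cO_X(L))\to H^1(X,\cO_X(L+D))$ with the restrictions to $U$, the same use of Theorem~\ref{loginj} for injectivity of the left vertical map and of $\Supp(D)\cap U=\emptyset$ for the bottom isomorphism, and the same conclusion via the long exact sequence. No differences worth noting.
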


\begin{proof} Consider the commutative diagram
\[ 
\xymatrix{
H^1(X,\cO_X(L)) \ar[r]^\alpha\ar[d]^\beta  & H^1(X,\cO_X(L+D))  
\ar[d] \\ 
H^1(U,\cO_U(L|_U)) \ar[r]^\gamma       & H^1(U,\cO_U((L+D)|_U))
} \]
Since $D$ is disjoint from $U$, $\gamma$ is an isomorphism.
Since $\beta$ is injective, we obtain that $\gamma\circ \beta$
is injective. Therefore $\alpha$ is injective. The long exact sequence induced
in cohomology by the short exact sequence 
$
0\to \cO_X(L)\to \cO_X(L+D)\to\cO_D(L+D)\to 0
$
gives the claim.
\end{proof}


\subsection{Applications}


Let $(X,B)$ be a proper log variety with log canonical singularities, let $L,H$ be Cartier divisors 
on $X$.

\begin{cor}\label{afv} Suppose $L\sim_\R K_X+B$. 
Suppose the totally canonical locus of $(X,B)$ is contained in some 
affine open subset $U'\subseteq X$. Then $H^1(X,\cO_X(L))=0$.
\end{cor}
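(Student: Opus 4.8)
The plan is to deduce this corollary from Theorem~\ref{loginj} together with Serre's vanishing theorem on affine varieties (quoted in the introduction). First I would observe that by Theorem~\ref{loginj}, the restriction homomorphism $H^1(X,\cO_X(L))\to H^1(U,\cO_U(L|_U))$ is injective, where $U$ denotes the totally canonical locus of $(X,B)$. So it suffices to show that $H^1(U,\cO_U(L|_U))=0$, or at least that the composite $H^1(X,\cO_X(L))\to H^1(U',\cO_{U'}(L|_{U'}))$ through the intermediate open set $U'$ vanishes.

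Next I would use the factorization $U\subseteq U'\subseteq X$ and the commutative diagram of restriction maps in $H^1$ (exactly as in the remark following Theorem~\ref{oitlog} and in the proof of Theorem~\ref{ov}): the map $H^1(X,\cO_X(L))\to H^1(U,\cO_U(L|_U))$ factors through $H^1(U',\cO_{U'}(L|_{U'}))$. Since $U'$ is affine and $\cO_{U'}(L|_{U'})$ is a quasi-coherent (in fact coherent) $\cO_{U'}$-module, Serre vanishing on affine varieties gives $H^1(U',\cO_{U'}(L|_{U'}))=0$. Hence the composite $H^1(X,\cO_X(L))\to H^1(U',\cO_{U'}(L|_{U'}))\to H^1(U,\cO_U(L|_U))$ is zero. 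But by Theorem~\ref{loginj} this composite is injective (it equals the injective restriction map $H^1(X,\cO_X(L))\to H^1(U,\cO_U(L|_U))$). An injective map that is also zero forces its source to vanish, so $H^1(X,\cO_X(L))=0$.

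I do not expect any serious obstacle here; the only point requiring a little care is to make sure that the restriction map $H^1(X,\cO_X(L))\to H^1(U,\cO_U(L|_U))$ genuinely factors through $H^1(U',\cO_{U'}(L|_{U'}))$ — but this is immediate from functoriality of cohomology restriction along the open immersions $U\hookrightarrow U'\hookrightarrow X$ and the resulting commutative triangle. One should also note that $L|_{U'}$ is a Cartier divisor on $U'$, so $\cO_{U'}(L|_{U'})$ is an invertible, in particular quasi-coherent, sheaf, which is all Serre's theorem needs. Thus the proof is a short diagram chase combining Theorem~\ref{loginj} with affine vanishing.

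\begin{proof}
Let $U$ be the totally canonical locus of $(X,B)$, so $U\subseteq U'$ by hypothesis. The open immersions $U\hookrightarrow U'\hookrightarrow X$ induce a commutative triangle of restriction homomorphisms
\[
\xymatrix{
H^1(X,\cO_X(L)) \ar[rr]\ar[dr] & & H^1(U,\cO_U(L|_U)) \\
 & H^1(U',\cO_{U'}(L|_{U'})) \ar[ur] &
}
\]
Since $U'$ is affine and $\cO_{U'}(L|_{U'})$ is a quasi-coherent $\cO_{U'}$-module, Serre's vanishing theorem gives $H^1(U',\cO_{U'}(L|_{U'}))=0$. Hence the restriction map $H^1(X,\cO_X(L))\to H^1(U,\cO_U(L|_U))$ is zero. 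On the other hand, by Theorem~\ref{loginj} this map is injective. Therefore $H^1(X,\cO_X(L))=0$.
\end{proof}
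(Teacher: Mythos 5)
Your argument is correct and is essentially identical to the paper's proof: both use the injectivity of $H^1(X,\cO_X(L))\to H^1(U,\cO_U(L|_U))$ from Theorem~\ref{loginj}, factor this map through $H^1(U',\cO_{U'}(L|_{U'}))$, and invoke Serre's affine vanishing to conclude the map is zero, forcing $H^1(X,\cO_X(L))=0$. No gaps.
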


\begin{proof} Let $U$ be the totally canonical locus of $(X,B)$. The restriction homomorphism 
$H^1(X,\cO_X(L))\to H^1(U,\cO_U(L|_U))$ is injective. 
It factors through $H^1(U',\cO_{U'}(L|_{U'}))=0$, hence it is zero. Therefore $H^1(X,\cO_X(L))=0$.
\end{proof}

\begin{cor}\label{tfcpbase}
Let $L\sim_\R K_X+B$. Let $H$ be a Cartier divisor on $X$ such that the linear 
system $|nH|$ is base point free for some positive integer $n$.
Let $m_0\ge 1$ and $s\in \Gamma(X,\cO_X(m_0H))$ such that 
$s|_C\ne 0$ for every lc center of $(X,B)$. Then the multiplication 
$$
\otimes s\colon H^1(X,\cO_X(L+mH))\to H^1(X,\cO_X(L+(m+m_0)H))
$$
is injective for $m\ge 1$.
\end{cor}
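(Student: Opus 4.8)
The plan is to deduce Corollary~\ref{tfcpbase} from Theorem~\ref{loginj} (equivalently, from Corollary~\ref{wd}) by replacing $(X,B)$ with a suitable log canonical boundary whose totally canonical locus avoids the zero locus of $s$. Let $D\in|m_0H|$ be the effective Cartier divisor defined by $s$. Since $D\sim m_0H$ one has $\cO_X(L+mH+D)\simeq\cO_X(L+(m+m_0)H)$, and under this identification the natural inclusion $\cO_X(L+mH)\hookrightarrow\cO_X(L+mH+D)$ is precisely the multiplication map $\otimes s$; so it is enough to prove that $H^1(X,\cO_X(L+mH))\to H^1(X,\cO_X(L+mH+D))$ is injective.

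First I would note that the hypothesis $s|_C\ne 0$ for every lc center $C$ of $(X,B)$ says exactly that no lc center of $(X,B)$ lies in $\Supp D$; hence by Lemma~\ref{ad} the log variety $(X,B+\epsilon D)$ has log canonical singularities for $0<\epsilon\ll 1$. Fix such an $\epsilon$, small enough that also $\epsilon m_0<1$, and set $c:=m-\epsilon m_0>0$. The key step is then to produce, using the base point freeness of $|nH|$, an effective $\R$-divisor $M\sim_\R cH$ such that $(X,B+\epsilon D+M)$ is still log canonical. Since $c/n$ may be large this cannot be achieved with a single general member of $|nH|$; instead I would take $k:=\lceil c/n\rceil$ general members $G_1,\dots,G_k\in|nH|$ and put $M:=\sum_{j=1}^k\tfrac{c}{kn}\,G_j\sim_\R cH$, so that each $G_j$ enters with coefficient $\tfrac{c}{kn}\le 1$. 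On a fixed log resolution $\nu\colon X'\to X$ of $(X,B+\epsilon D)$, base point freeness forces $\nu^*G_j$ to have no exceptional component (a general member of a base point free system misses the centres of the exceptional divisors), and by Bertini $\sum_j\nu^*G_j$ together with $\Supp((B+\epsilon D)_{X'})$ is a normal crossings divisor whose only new coefficients are the numbers $\tfrac{c}{kn}\le 1$; since on $\nu$ the crepant transform of $B+\epsilon D+M$ then has all coefficients $\le 1$, the pair $(X,B+\epsilon D+M)$ is log canonical. (If $\dim|nH|=0$ then $H\sim_\Q 0$, so $D=0$ and $\cO_X(m_0H)\simeq\cO_X$, and $\otimes s$ is an isomorphism; there is nothing to prove.) This construction of $M$ — representing the nef but possibly non-big class $cH$ by an effective divisor that preserves log canonicity — is the main obstacle.

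Finally set $B_m:=B+\epsilon D+M$. Then $B_m\ge 0$, $(X,B_m)$ is log canonical, $K_X+B_m\sim_\R K_X+B+mH\sim_\R L+mH$ (using $\epsilon D\sim_\R\epsilon m_0H$ and $M\sim_\R cH$), and $B_m\ge\epsilon D$, so $\Supp D\subseteq\Supp(B_m^{>0})$; hence the totally canonical locus $U_m$ of $(X,B_m)$, which is contained in $X\setminus\Supp(B_m^{>0})$, is disjoint from $\Supp D$. Applying Theorem~\ref{loginj} to $(X,B_m)$ and the Cartier divisor $L+mH\sim_\R K_X+B_m$, the restriction $H^1(X,\cO_X(L+mH))\to H^1(U_m,\cO_{U_m}((L+mH)|_{U_m}))$ is injective; since $s$ trivializes $\cO_X(m_0H)$ over $U_m$, the map on $H^1(U_m,-)$ induced by $\otimes s$ is an isomorphism, and the commutative square relating these two restrictions to $\otimes s$ on $X$ forces $H^1(X,\cO_X(L+mH))\to H^1(X,\cO_X(L+mH+D))$, i.e. $\otimes s$, to be injective. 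Equivalently, one may invoke Corollary~\ref{wd} directly for $(X,B_m)$, $L+mH$ and $D$.
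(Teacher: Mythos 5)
Your proposal is correct and follows essentially the same route as the paper: reduce $\otimes s$ to the inclusion twisted by the divisor $D$ of $s$, use Lemma~\ref{ad} to make $(X,B+\epsilon D)$ log canonical, absorb the remaining $(m-\epsilon m_0)H$ into the boundary using general members of a base point free system, and conclude via Corollary~\ref{wd} (equivalently Theorem~\ref{loginj}). The only, harmless, difference is bookkeeping in that absorption step: the paper takes a single general member $Y\in|n'(m-\epsilon m_0)H|$ with coefficient $1/n'$, whereas you spread the class over $\lceil c/n\rceil$ general members of $|nH|$ with coefficients $c/(kn)\le 1$.
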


\begin{proof} Let $D$ be the zero locus of $s$. There exists a rational number
$0<\epsilon<\frac{1}{m_0}$ such that $(X,B+\epsilon D)$ has log canonical
singularities. We have 
$$
L+mH\sim_\R K_X+B+\epsilon D+(m-\epsilon m_0)H.
$$
There exists $n\ge 1$ such that the linear system 
$|n(m-\epsilon m_0)H|$ has no base points. Let $Y$ be a general member,
and denote $B'=B+\epsilon D+\frac{1}{n}Y$. Then $(X,B')$ has log canonical
singularities, $\Supp D\subseteq \Supp B'$ and 
$$
L+mH\sim_\R K_X+B'
$$
Since $\Supp(D)$ is disjoint from the totally canonical locus of $(X,B')$, 
Corollary~\ref{wd} gives the injectivity of $H^1(X,L+mH)\to H^1(X,L+mH+D)$.
\end{proof}

\begin{cor} Let $V\subseteq \Gamma(X,\cO_X(H))$ be a vector subspace such 
that $V\otimes_k \cO_X\to \cO_X(H)$ is surjective.
If $L\sim_\R K+B+tH$ and $t>\dim_k V$, then the multiplication map
$$
V\otimes_k \Gamma(X,\cO_X(L-H))\to \Gamma(X,\cO_X(L))
$$
is surjective.
\end{cor}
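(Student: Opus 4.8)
The plan is to argue by induction on $N=\dim_kV$, passing to a general member $D$ of the base point free linear system $V$ and using Corollary~\ref{wd} to lift sections back from $D$.

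If some $s\in V$ is nowhere vanishing — which is automatic when $N=1$, since then $V\otimes_k\cO_X\to\cO_X(H)$ forces $s$ to generate $\cO_X(H)$ everywhere — then $\cO_X(H)\simeq\cO_X$ and multiplication by $s$ is already an isomorphism $\Gamma(X,\cO_X(L-H))\isoto\Gamma(X,\cO_X(L))$, so we are done. Hence assume $N\ge 2$ and that a general $s\in V$ has nonempty zero divisor $D=D_s\in|H|$. As $V$ is base point free and $(X,B)$ has only finitely many lc centers, a general such $D$ contains no component of $X$, is normal, contains no lc center of $(X,B)$, and $(X,B+D)$ is log canonical; by adjunction the pair $(D,B_D)$ with $K_D+B_D=(K_X+B+D)|_D$ is a proper log variety with log canonical singularities. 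Writing $H_D=H|_D$ and $L_D=L|_D$, and using $D\sim H$, we get $L_D\sim_\R K_D+B_D+(t-1)H_D$. Let $V_D\subseteq\Gamma(D,\cO_D(H_D))$ be the image of $V$; it generates $\cO_D(H_D)$, and $\dim_kV_D\le N-1<t-1$ because $s|_D=0$. By the inductive hypothesis applied to $(D,B_D)$, $L_D$, $H_D$, $V_D$, the multiplication map $V_D\otimes_k\Gamma(D,\cO_D(L_D-H_D))\to\Gamma(D,\cO_D(L_D))$ is surjective.

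The key point is a lifting statement: the restriction $\Gamma(X,\cO_X(L-H))\to\Gamma(D,\cO_D(L_D-H_D))$ is surjective. To prove it, put $c=\min\{t-2,1\}$, which lies in $(0,1]$ because $t>N\ge 2$, choose a sufficiently general effective $\R$-divisor $Y\sim_\R(t-2-c)H$ (a small multiple of a general member of $|mH|$ for $m\gg 0$, or $Y=0$), and set $B'=B+cD+Y$. Then $(X,B')$ is log canonical — a Bertini-type statement for the general divisors $D$ and $Y$ — the divisor $L-2H$ is Cartier with $L-2H\sim_\R K_X+B'$, and $D$, having positive coefficient $c$ in $B'$, is disjoint from the totally canonical locus of $(X,B')$, which is contained in $X\setminus\Supp(B'^{>0})$. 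Corollary~\ref{wd}, applied to $(X,B')$, to $L-2H\sim_\R K_X+B'$, and to the effective Cartier divisor $D$, yields the exact sequence $0\to\Gamma(X,\cO_X(L-2H))\to\Gamma(X,\cO_X(L-2H+D))\to\Gamma(D,\cO_D(L-2H+D))\to 0$; since $D\sim H$ we have $\cO_X(L-2H+D)\simeq\cO_X(L-H)$, and the surjectivity above is exactly the lifting statement.

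Finally I assemble the argument. Given $\sigma\in\Gamma(X,\cO_X(L))$, restrict to $D$ and, by the inductive hypothesis, write $\sigma|_D=\sum_j\bar v_j\bar\tau_j$ with $\bar v_j\in V_D$ and $\bar\tau_j\in\Gamma(D,\cO_D(L_D-H_D))$. Lift each $\bar v_j$ to $v_j\in V$ (by the definition of $V_D$) and each $\bar\tau_j$ to $\tau_j\in\Gamma(X,\cO_X(L-H))$ (by the lifting statement). Then $\sigma-\sum_jv_j\tau_j$ vanishes on $D$, hence equals $s\,\tau'$ for some $\tau'\in\Gamma(X,\cO_X(L-H))$, from the exact sequence $0\to\cO_X(L-H)\xrightarrow{\cdot s}\cO_X(L)\to\cO_D(L_D)\to 0$; therefore $\sigma=\sum_jv_j\tau_j+s\,\tau'$ lies in the image of $V\otimes_k\Gamma(X,\cO_X(L-H))$, since $v_j,s\in V$. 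The main obstacle is the third paragraph: constructing the auxiliary boundary $B'$ that simultaneously realizes $L-2H$ as a log canonical adjoint divisor and pushes $D$ out of the totally canonical locus, so that Corollary~\ref{wd} applies; the coefficient bookkeeping works precisely because the hypothesis $t>\dim_kV$ propagates (in the weaker form $t>2$) at every stage of the induction.
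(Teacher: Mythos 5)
Your proof is correct and follows essentially the same route as the paper's: induction on $\dim_k V$, restriction to the zero divisor $D$ of a general element of $V$, adjunction to obtain the log canonical pair on $D$, and Corollary~\ref{wd} applied to an auxiliary boundary of the form $B+cD+Y$ (absorbing the excess positivity $(t-2)H$) to lift sections of $L-H$ from $D$, followed by the Koszul-type assembly using $0\to\cO_X(L-H)\stackrel{\cdot s}{\to}\cO_X(L)\to\cO_D(L|_D)\to 0$. The only (harmless) difference is bookkeeping: you need the lifting statement only for $L-H$, while the paper records the analogous surjectivity for both $L$ and $L-H$.
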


\begin{proof} We use induction on $\dim V$. If $\dim V=1$,
then $V=k \varphi$, with $\varphi\colon \cO_X\isoto \cO_X(H)$.
Then $\otimes \varphi\colon \cO_X(L-H)\to\cO_X(L)$ is an isomorphism,
so the claim holds.

Let $\dim V>1$. Let $\varphi\in V$ be a general element, let $Y=(\varphi)+H$. 
Then the claim is equivalent to the surjectivity of the homomorphism 
$$
V|_Y\otimes \Gamma(X,\cO_X(L-H))|_Y\to \Gamma(X,\cO_X(L))|_Y
$$
where $\Gamma(X,\cF)|_Y$ denotes the image of the restriction map 
$\Gamma(X,\cF)\to \Gamma(Y,\cF\otimes \cO_Y)$, and $V|_Y$ is the image
of $V$ under this restriction for $\cF=\cO_X(H)$.

Assuming $\Gamma(X,\cO_X(L-H))|_Y=\Gamma(Y,\cO_Y(L))$ and 
$\Gamma(X,\cO_X(L-H))|_Y=\Gamma(Y,\cO_Y(L-H))$,
we prove the claim as follows: we have $L\sim_\R K_X+B+Y+(t-1)H$.
By adjunction, using that $Y$ is general, we have 
$L|_Y\sim_\R K_Y+B|_Y+(t-1)H|_Y$, $(Y,B|_Y)$ has log canonical
singularities, and $t-1>\dim V-1=\dim V|_Y$. Therefore 
$V|_Y\otimes \Gamma(Y,\cO_Y(L-H))\to \Gamma(Y,\cO_Y(L))$ is surjective by induction.

It remains to show that $\Gamma(X,\cO_X(L))\to \Gamma(Y,\cO_Y(L))$ and 
$\Gamma(X,\cO_X(L-H))\to \Gamma(Y,\cO_Y(L-H))$ are surjective. Consider the second
homomorphism. We have 
$$
L-Y\sim_\R K_X+B+(t-1)H=K_X+B+\epsilon Y+(t-1-\epsilon)H.
$$
Since $Y$ is general, $(X,B+\epsilon Y)$ has log canonical singularities
for $0<\epsilon \ll 1$. Since $H$ is free, we deduce that 
$L-Y\sim_\R K_X+B'$ with $(X,B')$ having log canonical singularities,
and $Y\subseteq \Supp B'$. By Corollary~\ref{wd}, 
$\Gamma(X,\cO_X(L))\to \Gamma(Y,\cO_Y(L))$ is surjective.
The surjectivity of the other homomorphism is proved in the same way.
\end{proof}


\section{Restriction to the non-log canonical locus}


Let $(X,B)$ be a proper log variety, and
$L$ a Cartier divisor on $X$ such that $L\sim_\R K_X+B$.
Suppose the locus of non-log canonical 
singularities $Y=(X,B)_{-\infty}$ is non-empty.

\begin{lem} Suppose $(X,\Supp B)$ is log smooth. 
\begin{itemize}
\item[1)] The long exact sequence induced in cohomology by the 
short exact sequence 
$$
0\to \cI_Y(L) \to \cO_X(L)\to \cO_Y(L)\to 0
$$
breaks up into short exact sequences
$$
0\to H^q(X,\cI_Y(L))\to H^q(X,\cO_X(L))\to H^q(Y,\cO_Y(L))\to 0\ (q\ge 0).
$$ 
\item[2)] Let $E$ be a prime divisor on $X$ such that
$\mult_EB=1$. The long exact sequence induced in cohomology by the 
short exact sequence 
$$
0\to \cI_Y(L-E) \to \cO_X(L-E)\to \cO_Y(L-E)\to 0
$$
breaks up into short exact sequences
$$
0\to H^q(X,\cI_Y(L-E))\to H^q(X,\cO_X(L-E))\to H^q(Y,\cO_Y(L-E))\to 0
\ (q\ge 0).
$$ 
\end{itemize}
\end{lem}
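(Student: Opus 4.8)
The plan is to deduce both parts directly from the reformulation of the injectivity theorem in Remark~\ref{refor}. First note that, since $(X,\Supp B)$ is log smooth, the identity map is a log crepant log resolution, so $\cI_Y=\cO_X(-N(B))$; because $B$ is effective we have $N(B)=\sum_{b_E>1}(\lceil b_E\rceil-1)E\ge 0$, and $N(B)$ is Cartier as $X$ is nonsingular. Hence, as a closed subscheme, $Y=(X,B)_{-\infty}$ is the effective divisor $N(B)$ and $\cO_Y=\cO_{N(B)}$.

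For part 1, put $D=N(B)$ and $L'=L-N(B)$. Then $L'$ is Cartier, and $\cI_Y(L)=\cO_X(L')$, $\cO_X(L)=\cO_X(L'+D)$, $\cO_Y(L)=\cO_D(L'+D)$, so the sequence in question is the one attached to $0\to\cO_X(L')\to\cO_X(L'+D)\to\cO_D(L'+D)\to0$. From $L\sim_\R K_X+B$ and $\Delta(B)=B-N(B)$ we obtain $L'\sim_\R K_X+\Delta(B)$. By the listed properties of $\Delta(B)$, this divisor is effective, its support $\Supp B$ is a normal crossings divisor, and each of its coefficients lies in $(0,1]$, hence is strictly positive; moreover $D=N(B)$ is an effective Cartier divisor supported by $\Supp B$. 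Thus Remark~\ref{refor} applies, with $U=X\setminus\Supp B$, and its conclusion is exactly that the long exact sequence breaks into short exact sequences $0\to H^q(X,\cI_Y(L))\to H^q(X,\cO_X(L))\to H^q(Y,\cO_Y(L))\to0$.

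For part 2, I would run the same argument with $L$ replaced by $L-E$: set $L'=L-E-N(B)$ and $D=N(B)$, so the relevant sequence is attached to $0\to\cO_X(L')\to\cO_X(L'+D)\to\cO_D(L'+D)\to0$ and $L'\sim_\R K_X+(\Delta(B)-E)$. Since $\mult_EB=1\in\Z_{>0}$, we have $\mult_E\Delta(B)=1$, so $\Delta(B)-E$ is effective with coefficients in $[0,1]$ and support $\Supp B\setminus E$, still a normal crossings divisor; and since $E\notin\Supp(B^{>1})=\Supp N(B)$, the divisor $D=N(B)$ is still effective, Cartier, and supported by $\Supp B\setminus E$. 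Now Remark~\ref{refor} applies with $U=X\setminus(\Supp B\setminus E)$ and gives the stated splitting.

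The argument is short, and the only step requiring care is the bookkeeping with divisor coefficients: that $N(B)$ is effective (this uses $B\ge 0$); that the boundary fed into Remark~\ref{refor} must be taken reduced and equal to $\Supp\Delta(B)$, respectively $\Supp\Delta(B)\setminus E$, in order that all of its coefficients be strictly positive as that theorem requires; and that removing $E$ in part 2 leaves $\Supp N(B)$ unchanged.
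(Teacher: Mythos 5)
Your argument is correct and is essentially the paper's own proof: both parts reduce, via $\cI_Y=\cO_X(-N(B))$ and $L-N(B)\sim_\R K_X+\Delta(B)$ with $N(B)$ supported by $\Supp\Delta(B)$, to Remark~\ref{refor} applied with $D=N(B)$ (the paper phrases part 2 as part 1 applied to $(X,B-E)$ and $L-E$, which is the same computation, since $N(B-E)=N(B)$ and $\Delta(B-E)=\Delta(B)-E$). The only wording slip is in your closing remark: the divisor appearing in the $\R$-linear equivalence required by Remark~\ref{refor} is the fractional divisor $\Delta(B)$ itself (respectively $\Delta(B)-E$), not a reduced divisor; only $\Sigma=X\setminus U$ is taken to be the reduced $\Supp\Delta(B)$ (respectively $\Supp\Delta(B)$ minus $E$), exactly as your displayed computations already use.
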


\begin{proof} 1) Let $N=N(B)$, so that $\cI_Y=\cO_X(-N)$. 
We have $L-N\sim_\R K_X+\Delta$ and $N$ is supported by
$\Delta$. By Remark~\ref{refor}, the natural map 
$H^q(X,\cO_X(L-N))\to H^q(X,\cO_X(L))$ is injective for all $q$.

2) We have $L-E\sim_\R K_X+B-E$ and $(X,B-E)_{-\infty}=(X,B)_{-\infty}=Y$.
Therefore 2) follows from 1).
\end{proof}

\begin{thm}[Extension from non-lc locus]\label{8.2} We have a short exact sequence
$$
0\to \Gamma(X,\cI_Y(L))\to \Gamma(X,\cO_X(L))\to \Gamma(Y,\cO_Y(L))\to 0.
$$
\end{thm}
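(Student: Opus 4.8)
The plan is to descend to the log smooth case settled in the preceding Lemma by passing to a log resolution, the one subtlety being that a naive resolution produces a boundary with negative coefficients to which neither that Lemma nor Esnault--Viehweg injectivity directly applies. Fix a log crepant log resolution $\mu\colon(X',B_{X'})\to(X,B)$, so $\mu^*(K_X+B)=K_{X'}+B_{X'}$ and $\Exc(\mu)\cup\Supp\mu^{-1}_*B$ is a normal crossings divisor. Write $N(B_{X'})=N^+-N^-$ with $N^\pm$ effective and without common components; since $B$ is effective, $N^-=-\lfloor(B_{X'})^{<0}\rfloor$ is effective and $\mu$-exceptional. I would now replace $B_{X'}$ by the effective divisor $B'':=B_{X'}+N^-$, obtained by deleting the integral negative part of $B_{X'}$; it still has log smooth support, and $L'':=\mu^*L+N^-$ is a Cartier divisor with $L''\sim_\R K_{X'}+B''$. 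Because the negative part has been removed, $N(B'')=N^+$, so $Y'':=(X',B'')_{-\infty}$ is the subscheme of $X'$ with ideal sheaf $\cO_{X'}(-N^+)$, and $Y''$ is non-empty (if $N^+=0$ then $\cI_Y=\mu_*\cO_{X'}(N^--N^+)=\mu_*\cO_{X'}(N^-)=\cO_X$, contradicting that $Y$ is non-empty).

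Next I would match up pushforwards. Since $N^-$ is effective and $\mu$-exceptional and $X$ is normal, $\mu_*\cO_{X'}(N^-)=\cO_X$, so by the projection formula $\mu_*\cO_{X'}(L'')=\cO_X(L)$. Likewise $\cI_{Y''}(L'')\cong\mu^*\cO_X(L)\otimes\cO_{X'}(N^--N^+)=\mu^*\cO_X(L)\otimes\cO_{X'}(-N(B_{X'}))$, so by the projection formula together with the very definition $\cI_Y=\mu_*\cO_{X'}(-N(B_{X'}))$ one gets $\mu_*(\cI_{Y''}(L''))=\cI_Y(L)$. These two isomorphisms are compatible with the inclusions $\cI_{Y''}(L'')\subseteq\cO_{X'}(L'')$ and $\cI_Y(L)\subseteq\cO_X(L)$ — after twisting by $\cO_X(L)^{-1}$ both reduce to the defining inclusion $\cI_Y=\mu_*\cO_{X'}(N^--N^+)\subseteq\mu_*\cO_{X'}(N^-)=\cO_X$.

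Now I would combine two injectivity statements. The preceding Lemma, applied to the proper log smooth log variety $(X',B'')$ with the Cartier divisor $L''\sim_\R K_{X'}+B''$ and $Y''=(X',B'')_{-\infty}\ne\emptyset$, gives that $H^1(X',\cI_{Y''}(L''))\to H^1(X',\cO_{X'}(L''))$ is injective. On the other hand, Lemma~\ref{i1} applied to $\mu$ (with $\cF'=\cI_{Y''}(L'')$, resp.\ $\cF'=\cO_{X'}(L'')$) shows that the edge maps $H^1(X,\cI_Y(L))=H^1(X,\mu_*(\cI_{Y''}(L'')))\to H^1(X',\cI_{Y''}(L''))$ and $H^1(X,\cO_X(L))=H^1(X,\mu_*\cO_{X'}(L''))\to H^1(X',\cO_{X'}(L''))$ are injective. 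By functoriality of the Leray edge maps these four homomorphisms form a commutative square, so the composite ``left vertical, then bottom'' is injective; since this composite equals ``top, then right vertical'', the top map $H^1(X,\cI_Y(L))\to H^1(X,\cO_X(L))$ is injective. Feeding this into the long exact cohomology sequence of $0\to\cI_Y(L)\to\cO_X(L)\to\cO_Y(L)\to 0$ yields precisely the asserted short exact sequence $0\to\Gamma(X,\cI_Y(L))\to\Gamma(X,\cO_X(L))\to\Gamma(Y,\cO_Y(L))\to 0$.

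The genuinely load-bearing step is the reduction in the first paragraph: shifting the negative integral part $N^-$ of $B_{X'}$ out of the boundary and into the line bundle (so that one works with $(X',B'',L'')$ rather than $(X',B_{X'},\mu^*L)$) is what brings us into the range of the preceding Lemma, and it costs nothing on pushforwards exactly because $N^-$ is effective and $\mu$-exceptional. Everything else is formal. The reason the conclusion is limited to global sections is that the descent relies on Lemma~\ref{i1}, which provides injectivity only for the first Leray edge map $H^1(X,\mu_*\cF')\to H^1(X',\cF')$; for $q\ge 2$ the analogous edge maps need not be injective (cf.\ the example in \S1.B), so one would need a vanishing or degeneration input to push the argument further.
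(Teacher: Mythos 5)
Your proposal is correct and is essentially the paper's own argument: the paper likewise passes to a log crepant log resolution, notes $\mu^*L-N\sim_\R K_{X'}+\Delta$ with $N^+$ supported by $\Delta$ to get injectivity of $H^q(X',\cO_{X'}(\mu^*L-N))\to H^q(X',\cO_{X'}(\mu^*L+N^-))$ via Remark~\ref{refor}, and then descends the $q=1$ case through the Leray edge map of Lemma~\ref{i1} using $\cI_Y(L)=\mu_*\cO_{X'}(\mu^*L-N)$. Your auxiliary pair $(X',B''=B_{X'}+N^-)$ with $\Delta(B'')=\Delta(B_{X'})$ and $N(B'')=N^+$ just repackages that same application of the preceding Lemma, so the two proofs coincide.
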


\begin{proof} Let $\mu\colon (X',B_{X'})\to (X,B)$ be a log
crepant log resolution. 
Let $N(B_{X'})=N=N^+-N^-$ and $\Delta=B_{X'}-N(B_{X'})$. We have 
$$
\mu^*L-N\sim_\R K_{X'}+\Delta
$$
and $N^+$ is supported by $\Delta$. By Remark~\ref{refor},
we obtain for all $q$ short exact sequences 
$$
0\to H^q(X',\cO_{X'}(\mu^*L-N))\to H^q(X',\cO_{X'}(\mu^*L+N^-))\to 
H^q(N',\cO_{N^+}(\mu^*L+N^-))\to 0
$$
By definition, $\cI_Y=\mu_*\cO_{X'}(-N)$. Thus
$\cI_Y(L)=\mu_*\cO_{X'}(\mu^*L-N)$, and we obtain a commutative
diagram
\[ 
\xymatrix{
H^q(X',\cO_{X'}(\mu^*L+N^--N^+)) \ar[rr]^{\gamma^q} & & 
H^q(X',\cO_{X'}(\mu^*L+N^-))   \\ 
H^q(X,\cI_Y(L)) \ar[rr]^{\alpha^q} \ar[u]^{\beta^q} & &
H^q(X,\cO_X(L))\ar[u]
} \]
From above, $\gamma^q$ is injective. By Lemma~\ref{i1}, $\beta^1$ is injective.
Therefore $\gamma^1\circ \beta^1$ is injective. Therefore $\alpha^1$
is injective, which is equivalent to our statement.
\end{proof}


\subsection{Applications}

The first application was first stated by Shokurov,
who showed that it follows from the Log Minimal Model Program and Log Abundance 
in the same dimension (see the proof of~\cite[Lemma 10.15]{Sh03}).

\begin{thm}[Global inversion of adjunction]\label{gia} 
Let $(X,B)$ be a proper connected log variety such that $K_X+B\sim_\R 0$. 
Suppose $Y=(X,B)_{-\infty}$ is non-empty. Then $Y$ is connected, and intersects 
every lc center of $(X,B)$.
\end{thm}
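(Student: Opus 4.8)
The plan is to deduce both assertions from Theorem~\ref{8.2}. Since $K_X+B\sim_\R 0$ we may take the Cartier divisor $L=0$, and then Theorem~\ref{8.2} yields a surjection $\Gamma(X,\cO_X)\twoheadrightarrow\Gamma(Y,\cO_Y)$. As $X$ is a proper connected reduced variety over the algebraically closed field $k$, we have $\Gamma(X,\cO_X)=k$, so $\Gamma(Y,\cO_Y)$ is a quotient of $k$; being nonzero (because $Y\ne\emptyset$) it equals $k$. A proper $k$-scheme whose ring of global functions is $k$ has no nontrivial idempotents, hence is connected; this gives the first half.

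For the second half I would argue by contradiction: suppose $C$ is an lc center of $(X,B)$ with $C\cap Y=\emptyset$. Since $C$ is an lc center, choose a log crepant log resolution $\mu\colon(X',B_{X'})\to(X,B)$ together with a prime divisor $E\subset X'$ such that $\mu(E)=C$ and $\mult_E(B_{X'})=1$. The divisor $B_{X'}$ is usually not effective, but its negative part is $\mu$-exceptional, so put $N^-=-\lfloor (B_{X'})^{<0}\rfloor$ (an effective $\mu$-exceptional Cartier divisor) and $B''=B_{X'}+N^-$. One checks that $(X',B'')$ is a proper log variety, that $B''\ge 0$ has no component of negative coefficient and the same components of coefficient $>1$ as $B_{X'}$, so $N(B'')=N^+(B_{X'})$ and the non-log canonical locus $Y'':=(X',B'')_{-\infty}$ has ideal sheaf $\cO_{X'}(-N^+(B_{X'}))$, and finally that $K_{X'}+B''=\mu^*(K_X+B)+N^-\sim_\R N^-$. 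Two points are essential. First, $Y''\ne\emptyset$: indeed the hypothesis $Y\ne\emptyset$ is equivalent to $(X,B)$ being non-log canonical, which forces $N^+(B_{X'})\ne 0$. Second, $\mu$ maps the support of $Y''$ into $Y$ (every component of $N^+(B_{X'})$ is a divisor of log discrepancy $<0$, so its image lies in the non-log canonical locus); combined with $\mu(E)=C$ and $C\cap Y=\emptyset$ this gives $E\cap Y''=\emptyset$. Note also $\mult_E(B'')=1$.

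Then I would apply Theorem~\ref{8.2} to the proper log variety $(X',B''-E)$, whose non-log canonical locus is still $Y''$, with the Cartier divisor $L'':=N^--E\sim_\R K_{X'}+(B''-E)$. The resulting short exact sequence is $0\to\Gamma(X',\cI_{Y''}(L''))\to\Gamma(X',\cO_{X'}(L''))\to\Gamma(Y'',\cO_{Y''}(L''))\to 0$. The middle term embeds into $\mu_*\cO_{X'}(N^-)=\cO_X$, hence into $\Gamma(X,\cO_X)=k$; a nonzero constant would force $N^--E\ge 0$, i.e. $E\le N^-$, which is impossible since $\mult_E(B_{X'})=1\ge 0$ means $E\not\subseteq\Supp N^-$. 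Therefore $\Gamma(X',\cO_{X'}(L''))=0$, so $\Gamma(Y'',\cO_{Y''}(L''))=0$. But $E\cap Y''=\emptyset$ gives $\cO_{Y''}(L'')=\cO_{Y''}(N^-)$, and the tautological section of $\cO_{X'}(N^-)$ restricts to a section of $\cO_{Y''}(N^-)$ that is nonzero at the generic point of every component of the support $\Supp N^+(B_{X'})$ of $Y''$ (those components are not contained in $\Supp N^-$). This contradiction shows $C\cap Y\ne\emptyset$.

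The routine part of this argument is the first half together with the cohomological bookkeeping in the last paragraph; the real work lies in the middle step — assembling the auxiliary proper log variety $(X',B''-E)$ and checking simultaneously that it is a genuine log variety, that its non-log canonical locus is nonempty, disjoint from $E$, and still maps into $Y$, and that $K_{X'}+(B''-E)$ admits a Cartier representative with no nonconstant global sections. Once that package is in place, Theorem~\ref{8.2} does the rest, and the statement is exactly a global form of inversion of adjunction.
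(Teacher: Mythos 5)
Your proof is correct and takes essentially the same route as the paper: the connectedness half is identical, and for the lc-center half you run the same mechanism on a log resolution, only repackaged --- you apply Theorem~\ref{8.2} upstairs to the auxiliary pair $(X',B''-E)$ (which, for a pair with log smooth support, amounts to invoking Remark~\ref{refor} directly) and argue by contradiction with a single coefficient-one divisor $E$ over $C$, whereas the paper applies Remark~\ref{refor} with the full coefficient-one part $\Sigma$ over $C$ and concludes $\Sigma\cap N^+\neq\emptyset$ directly. In both versions the decisive points coincide: global sections upstairs are constants forced to vanish along $E$ (resp.\ $\Sigma$), while the canonical section of $\cO(N^-)$ gives a nonzero section on $N^+$, so no change of substance is needed.
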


\begin{proof}
By Theorem~\ref{8.2}, we have a short exact sequence
$$
0\to \Gamma(X,\cI_Y)\to \Gamma(X,\cO_X)\to \Gamma(Y,\cO_Y)\to 0.
$$
We have $0=\Gamma(X,\cI_Y), k\isoto \Gamma(X,\cO_X)$. Therefore
$k\isoto \Gamma(Y,\cO_Y)$, so $Y$ is connected.

Let $C$ be a log canonical center of $(X,B)$.
Let $\mu\colon (X',B_{X'})\to (X,B)$ be a log resolution such that
$\mu^{-1}(C)$ has codimension one. Let $\Sigma$ be the part of 
$B_{X'}^{=1}$ contained in $\mu^{-1}(C)$. We have $\mu(\Sigma)=C$.
Let $B'=B_{X'}-\Sigma$ and $N=N(B')=N(B_{X'})$. We have 
$$
-\Sigma-N\sim_\R K_{X'}+\Delta(B')
$$
The boundary $\Delta(B')$ supports $N^+$. By Remark~\ref{refor}, 
we obtain a surjection
$$
\Gamma(X',\cO_{X'}(-\Sigma+N^-))\to \Gamma(N^+,\cO_{N^+}(-\Sigma+N^-)). 
$$
We have $\Gamma(X',\cO_{X'}(-\Sigma+N^-))\subseteq \Gamma(X,\cI_C)=0$.
Therefore $\Gamma(X',\cO_{X'}(-\Sigma+N^-))=0$. We obtain
$
\Gamma(N^+,\cO_{N^+}(-\Sigma+N^-))=0.
$
Since 
$$
0=\Gamma(N^+,\cO_{N^+}(-\Sigma+N^-))\subseteq \Gamma(N^+,\cO_{N^+}(N^-))\ne 0,
$$ 
we infer $\Sigma\cap N^+\ne \emptyset$. This implies $C\cap Y\ne \emptyset$. 
\end{proof}

The next application is a corollary of~\cite[Theorem 4.4.]{Amb03}, if $H$ is $\Q$-ample.

\begin{thm}[Extension from lc centers]\label{elc}
Let $(X,B)$ be a proper log variety with log canonical singularities.
Let $L$ be a Cartier divisor on $X$ such that $H=L-(K_X+B)$ is a 
semiample $\Q$-divisor. Let $m_0\ge 1$, $D\in |m_0H|$, and denote
by $Z$ the union of lc centers of $(X,B)$ contained in $\Supp D$. Then
the restriction homomorphism
$$
\Gamma(X,\cO_X(L))\to \Gamma(Z,\cO_Z(L))
$$
is surjective.
\end{thm}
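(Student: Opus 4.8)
The plan is to produce an effective $\R$-divisor $B'$ on $X$ with $L\sim_\R K_X+B'$ and $(X,B')_{-\infty}=Z$ (with reduced structure), and then to extract the surjectivity directly from the restriction sequence of Theorem~\ref{8.2}. If $Z=\emptyset$ the assertion is trivial, so assume $Z\neq\emptyset$.

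Since $(X,B)$ is log canonical and $D$ is an effective Cartier divisor, Lemma~\ref{ad} furnishes a rational number $0<\epsilon<1/m_0$, sufficiently small, for which $(X,B+\epsilon D)_{-\infty}=Z$. The defect is that $\epsilon D$ is only $\Q$-linearly equivalent to $\epsilon m_0H$, not to $H$, and I would correct this using semiampleness: $(1-\epsilon m_0)H$ is again a semiample $\Q$-divisor, so there is an integer $n\ge 2$ with $n(1-\epsilon m_0)H$ Cartier and base point free. Let $G$ be a general member of $|n(1-\epsilon m_0)H|$ and set
$$
B'=B+\epsilon D+\tfrac1n G.
$$
Then $B'\ge 0$ and $B'-B=\epsilon D+\tfrac1n G\sim_\Q \epsilon m_0H+(1-\epsilon m_0)H=H$, so $L=K_X+B+H\sim_\Q K_X+B'$; in particular $K_X+B'$ is $\R$-Cartier and $(X,B')$ is a proper log variety.

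The heart of the argument is the claim that $(X,B')_{-\infty}=Z$ as schemes. I would check this on a common log resolution $\mu\colon X'\to X$ of $(X,B+\epsilon D)$ and of $|n(1-\epsilon m_0)H|$: because $G$ is general and the linear system is base point free (and $X$ is normal, so $\mu(\Exc\mu)$ has codimension $\ge 2$), the pullback $\mu^*G$ equals the strict transform $\widetilde G$, which is smooth and meets the crepant boundary of $B+\epsilon D$ together with $\Exc(\mu)$ transversally. Writing $\mu^*(K_X+B+\epsilon D)=K_{X'}+C$ we get $\mu^*(K_X+B')=K_{X'}+C+\tfrac1n\widetilde G$ with $\tfrac1n\in(0,1)$, so the Weil divisor $N(\cdot)$ defining the non-log-canonical ideal is the same for $C+\tfrac1n\widetilde G$ as for $C$; hence $\cI_{(X,B')_{-\infty}}=\mu_*\cO_{X'}(-N(C))=\cI_{(X,B+\epsilon D)_{-\infty}}=\cI_Z$. (Equivalently, one may instead apply Lemma~\ref{ad} to the log canonical pair $(X,B+\tfrac1nG)$ — which has the same lc centers as $(X,B)$, and the same ones contained in $\Supp D$ — together with the divisor $D$.)

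Finally, Theorem~\ref{8.2} applied to $(X,B')$ and $L$, with $Y=(X,B')_{-\infty}=Z$ non-empty, gives the exact sequence
$$
0\to \Gamma(X,\cI_Z(L))\to \Gamma(X,\cO_X(L))\to \Gamma(Z,\cO_Z(L))\to 0,
$$
so in particular the restriction $\Gamma(X,\cO_X(L))\to\Gamma(Z,\cO_Z(L))$ is surjective. I expect the only genuine obstacle to be the claim $(X,B')_{-\infty}=Z$, i.e.\ that enlarging the boundary by a general member of a base point free linear system with coefficient $<1$ does not change the non-log-canonical locus; everything else is formal once Lemma~\ref{ad} and Theorem~\ref{8.2} are available.
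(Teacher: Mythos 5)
Your proposal is correct and follows essentially the same route as the paper: apply Lemma~\ref{ad} to get $(X,B+\epsilon D)_{-\infty}=Z$ for small rational $\epsilon<1/m_0$, absorb the remaining $(1-\epsilon m_0)H$ via a general member of a base point free multiple so that $L\sim_\Q K_X+B'$ with $(X,B')_{-\infty}=Z$, and conclude by Theorem~\ref{8.2}. Your explicit log-resolution check that the general member with coefficient $<1$ does not alter $N(\cdot)$ is a welcome elaboration of a step the paper leaves implicit (your parenthetical alternative via Lemma~\ref{ad} applied to $(X,B+\tfrac1n G)$ is slightly circular, since $G$ was chosen using $\epsilon$, but the main argument does not need it).
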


\begin{proof} By Lemma~\ref{ad}, there exists $\epsilon\in (0,1)\cap \Q$ 
such that $(X,B+\epsilon D)_{-\infty}=Z$. Let $m_1\ge 1$ such that the linear
system $|m_1H|$ has no base points. Let $D'\in |m_1H|$ be a general
member. Then $(X,B+\epsilon D+(\frac{1}{m_1}-\frac{\epsilon}{m_0m_1})D')_{-\infty}=Z$ and 
$$
L\sim_\Q  K_X+B+\epsilon D+(\frac{1}{m_1}-\frac{\epsilon}{m_0m_1})D'.
$$
By Theorem~\ref{8.2}, $\Gamma(X,\cO_X(L))\to \Gamma(Z,\cO_Z(L))$ is surjective.
\end{proof}

\begin{cor} Let $(X,B)$ be a proper log variety with log canonical 
singularities such that the linear system $|m_1(K_X+B)|$ has no base 
points for some $m_1\ge 1$. Let $m_0\ge 1$, $D\in |m_0(K_X+B)|$, and 
denote by $Z$ the union of lc centers of $(X,B)$ contained in $\Supp D$. 
Then
$$
\Gamma(X,\cO_X(mK_X+mB))\to \Gamma(Z,\cO_Z(mK_X+mB))
$$
is surjective for every $m\ge 2$ such that $mK_X+mB$ is Cartier.
\end{cor}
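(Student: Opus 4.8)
The plan is to deduce this corollary from Theorem~\ref{elc} by a suitable choice of the semiample divisor $H$. The idea is to write $L=mK_X+mB$ as $K_X+B$ plus a leftover multiple of $K_X+B$, which is semiample by hypothesis, and then feed the data into the extension theorem from lc centers.

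First I would set $H=(m-1)(K_X+B)$. Since $m\ge 2$, this is a nonnegative multiple of $K_X+B$, and because $|m_1(K_X+B)|$ is base point free, $H$ is a semiample $\Q$-divisor: indeed $m_1(m-1)\cdot H$, being a positive integer multiple of $m_1(K_X+B)$, has no base points, so $H$ is semiample. Moreover $H$ is $\Q$-Cartier because $mK_X+mB$ and $K_X+B$ are both Cartier (the former by assumption, the latter as $X$ is a log variety with $|m_1(K_X+B)|$ base point free, forcing $K_X+B$ Cartier up to the usual conventions — in any case $H=L-(K_X+B)$ is a difference of Cartier divisors). We have $L-(K_X+B)=H$ exactly, so the hypothesis $H=L-(K_X+B)$ semiample $\Q$-divisor of Theorem~\ref{elc} is satisfied.

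Next I would match the remaining data. We are given $m_0\ge 1$ and $D\in |m_0(K_X+B)|$; I want to view $D$ as an element of $|m_0' H|$ for a suitable $m_0'$. Since $H=(m-1)(K_X+B)$, a member of $|m_0(K_X+B)|$ need not literally lie in $|m_0'H|$ unless $(m-1)\mid m_0$. To avoid this divisibility issue, I would instead apply Theorem~\ref{elc} with the linear equivalence arranged directly: one can replace $D$ by $D'=(m-1)D\in |m_0(m-1)(K_X+B)|=|m_0 H|$, and note that the union of lc centers of $(X,B)$ contained in $\Supp D'=\Supp D$ is again $Z$ (since $(m-1)\ge 1$, the support is unchanged). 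Then Theorem~\ref{elc}, applied to $L$, $H=(m-1)(K_X+B)$, $m_0$, and $D'\in|m_0H|$, yields the surjectivity of $\Gamma(X,\cO_X(L))\to \Gamma(Z,\cO_Z(L))$, which is exactly the claim since $L=mK_X+mB$ and $Z$ is the same union of lc centers named in the corollary.

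The main obstacle I anticipate is the bookkeeping around the divisibility and support statements: one must check that multiplying $D$ by $m-1$ does not enlarge the set of lc centers involved, and that $H$ is genuinely a semiample $\Q$-\emph{divisor} (not merely a $\Q$-linear equivalence class) with $L-(K_X+B)=H$ as divisors, so that Theorem~\ref{elc} applies verbatim. Both are routine once one is careful that $\Supp((m-1)D)=\Supp(D)$ and that base point freeness of $|m_1(K_X+B)|$ propagates to base point freeness of a suitable multiple of $H$. No new vanishing or injectivity input is needed beyond Theorem~\ref{elc}.
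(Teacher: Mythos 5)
Your proposal is correct and follows the paper's own argument: the paper proves the corollary in one line by applying Theorem~\ref{elc} to the decomposition $m(K_X+B)=K_X+B+(m-1)(K_X+B)$, i.e.\ with $H=(m-1)(K_X+B)$. Your extra step of replacing $D$ by $(m-1)D\in|m_0H|$ (with unchanged support, hence unchanged $Z$) just makes explicit the divisibility bookkeeping the paper leaves implicit, and is fine.
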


\begin{proof} Apply Theorem~\ref{elc} to $m(K_X+B)=K_X+B+(m-1)(K_X+B)$. 
\end{proof}


\section{Questions}


\begin{question}\label{inde} 
Let $(X,\Sigma)$ be a log smooth pair, with $X$ proper.
Denote $U=X\setminus \Sigma$. Is the restriction
$
H^q(X,\Omega^p_X(\log \Sigma))\to H^q(U,\Omega^p_U)
$ 
injective for $p+q>\dim X$?
\end{question}

\begin{exmp}
Let $P\in S$ be the germ of non-singular point, of dimension $d\ge 2$.
Let $\mu\colon X\to S$ be the blow-up at $P$, with exceptional locus
$E\simeq \bP^{d-1}$. Denote $U=X\setminus E$. The residue map 
$$
R^{d-1}\mu_*\cO_X(K_X+E)\to R^{d-1}\mu_*\cO_E(K_E)
$$
is an isomorphism, so $R^{d-1}\mu_*\cO_X(K_X+E)$ is a skyscraper sheaf
on $X$ centered at $P$. Since $\mu$ is an isomorphism on $U$,
$R^{d-1}(\mu|_U)_*\cO_E(K_E)=0$. Therefore the restriction homomorphism
$$
R^{d-1}\mu_*\cO_X(K_X+E)\to R^{d-1}(\mu|_U)_*\cO_U(K_U)
$$
is not injective.
\end{exmp}

\begin{question} Let $(X,\Sigma)$ be a log smooth pair. 
Denote $U=X\setminus \Sigma$.
Let $\pi\colon X\to S$ be a proper morphism, 
let $\pi|_U\colon U\to S$ be its restriction to $U$. 
Suppose that $\pi(C)=\pi(X)$ for every strata $C$ of $(X,\Sigma)$.
Is the restriction
$R^q\pi_*\cO_X(K_X+\Sigma)\to R^q(\pi|_U)_*\cO_U(K_U)$ injective
for all $q$?
\end{question}

\begin{question}
Let $(X,B)$ be a proper log variety with log canonical singularities. 
Let $U$ be the totally canonical locus of $(X,B)$. 
Let $L$ be a Cartier divisor on $X$ such that $L\sim_\R K_X+B$.
Is the restriction 
$
H^q(X,\cO_X(L))\to H^q(U,\cO_U(L|_U))
$
injective for all $q$?
\end{question}

\begin{question} Let $(X,B)$ be a proper log variety. Suppose the locus 
of non-log canonical singularities $Y=(X,B)_{-\infty}$ is non-empty.
Let $L$ be a Cartier divisor on $X$ such that $L\sim_\R K_X+B$.
Does the long exact sequence induced in cohomology by the short exact 
sequence 
$
0\to \cI_Y(L) \to \cO_X(L) \to \cO_Y(L)\to 0
$
break up into short exact sequences
$$
0\to H^q(X,\cI_Y(L))\to H^q(X,\cO_X(L))\to H^q(Y,\cO_Y(L))\to 0
\ (q\ge 0)?
$$
\end{question}


\end{document}